\documentclass[a4paper,fleqn]{cas-sc}

\usepackage[numbers]{natbib}

\def\tsc#1{\csdef{#1}{\textsc{\lowercase{#1}}\xspace}}
\tsc{WGM}
\tsc{QE}

\usepackage{subcaption}
\usepackage{enumitem}

\newcommand{\R}{\mathbb R}
\newcommand{\Sph}{\mathbb S}
\newcommand{\Kc}{\mathcal K}
\newcommand{\Pc}{\mathcal P}
\newcommand{\Gc}{\mathbb G}
\newcommand{\eps}{\varepsilon}
\newcommand{\de}{\mathsf d}
\newcommand{\1}{\mathbf 1}
\newcommand{\cl}{\operatorname{cl}}
\newcommand{\Int}{\operatorname{Int}}
\newcommand{\conv}{\operatorname{conv}}
\newcommand{\supp}{\operatorname{supp}}
\newcommand{\argmax}{\operatorname*{arg\,max}}
\newcommand{\HD}{\operatorname{HD}}
\newcommand{\Med}{\operatorname{Med}}
\newcommand{\dist}{\operatorname{dist}}

\numberwithin{equation}{section}

\newtheorem{theorem}{Theorem}[section]
\newtheorem{lemma}[theorem]{Lemma}
\newtheorem{proposition}[theorem]{Proposition}
\newtheorem{corollary}[theorem]{Corollary}

\newtheorem{definition}[theorem]{Definition}
\newtheorem{example}[theorem]{Example}
\newtheorem{remark}[theorem]{Remark}

\newproof{pf}{Proof}

\begin{document}

\shorttitle{Depth based on translates of a convex body}    

\shortauthors{Molchanov and Nagy}  

\title [mode = title]{Depth based on translates of a convex body:\\geometry, floating bodies, and empirical theory}  



%

\author[1]{Ilya Molchanov}[orcid=0000-0003-2502-0334]

\cormark[1]


\ead{ilya.molchanov@unibe.ch}



\affiliation[1]{organization={University of Bern, Institute of Mathematical Statistics and Actuarial Science},
            city={Bern},
            postcode={3012}, 
            country={Switzerland}}

\author[2]{Stanislav Nagy}[orcid=0000-0002-8610-4227]

\fnmark[1]

\ead{nagy@karlin.mff.cuni.cz}



\affiliation[2]{organization={Charles University,
	Faculty of Mathematics and Physics},
            city={Prague},
            postcode={186~75}, 
            country={Czech Republic}}

\cortext[1]{Corresponding author}

\fntext[1]{The work of S. Nagy was supported by the ERC CZ grant LL2407 of the Ministry of Education, Youth and Sport of the Czech Republic.}

\nonumnote{This work was initiated during the trimester program ``The Interplay between High-Dimensional Geometry and Probability'' at the Hausdorff Research Institute for Mathematics in Bonn. The authors acknowledge the contribution of Petra Laketa to a preliminary version of this manuscript, and are grateful to Elisabeth Werner for discussions at early stages of this work.}

\begin{abstract}
  For a fixed full-dimensional convex body $K\subset\R^d$, we study
  the $K$-depth of a point $y$ with respect to a probability measure
  $\mu$ by comparing the masses of translates of $K$ that fail to
  contain $y$.  Its upper level sets are intersections of high-mass
  translates of $K$, and hence are convex and admit an exact
  representation as Minkowski erosions. We distinguish the closed core
  determined by the maximisers of the translation-mass function from
  the effective support on which the depth exceeds its unavoidable
  baseline. We establish continuity, affine equivariance, comparison
  with halfspace (Tukey) depth, as well as normal-cone and smoothness criteria. 
  For uniform measures on convex bodies, the regions coincide exactly
  with the ball floating bodies of Sch\"utt, Werner and Yalikun; for
  positive continuous densities they coincide with the weighted ball
  floating bodies of Werner and Yalikun. Their volume-derivative
  results yield relative affine and $L_p$-relative surface-area
  asymptotics for the present depth regions. For dilates $tK$, the
  depth converges to a depth generated by the tangent cones of $K$ as
  $t \to \infty$; for smooth $K$ this limit is Tukey's halfspace
  depth, whereas nonsmooth testing bodies can produce a genuinely
  different limit. The worst-case maximal $K$-depth is exactly
  $1/h_T(K)$, where $h_T(K)$ is the Helly number of the family of
  translates of $K$; for empirical measures on $n$ observations the
  exact constant is $\lceil n/h_T(K)\rceil/n$. Under a uniform
  interior rolling-ball condition and compact support and having zero
  mass on hyperplanes, the convergence to halfspace depth is monotone
  and uniform with an explicit error bound. If the density is bounded
  by $L$, then this bound is of order $O(t^{-1})$
  and this order is sharp. We also record that the dependence on the
  testing body is not monotone in general, while Hausdorff
  perturbations of $K$ give stable depth functions and regions at
  regular levels. Finally, we prove uniform consistency and pointwise
  weak limits for empirical $K$-depth, including the non-Gaussian
  limit caused by nonunique active translates.
\end{abstract}



\begin{keywords}
ball convexity \sep depth \sep floating body \sep halfspace depth \sep $K$-hulls
\end{keywords}

\maketitle

\section{Introduction}

A central region associated with a probability measure is often
constructed as an intersection of members of a prescribed family of
sets having large probability content.  Tukey's halfspace depth uses
closed halfspaces and produces convex depth regions
\cite{Tukey1975,DonohoGasko1992,RousseeuwRuts1999}. The relation
between halfspace-depth regions and classical convex floating bodies
established a link between multivariate statistics and convex
geometry; it was developed systematically by Nagy, Sch\"utt and Werner
\cite{NagySchuettWerner2019}. In this paper the testing family
consists instead of all translates of one fixed convex body $K$.

This construction relates to $K$-convexity. The $K$-hull of a set
$A\subset\R^d$ is the intersection of all translates of $K$ containing
$A$. For a Euclidean ball this is the classical ball hull
\cite{Bezdek2013,FodorKeveiVigh2014}. General $K$-hulls are studied in
\cite{Polovinkin1996} and their random counterparts in
\cite{MarynychMolchanov2022}, among other sources. The present depth
regions replace the requirement that a translate contain the entire
set by a probability-content requirement with respect to a given
probability measure $\mu$ on $\R^d$.

Recent work of Sch\"utt, Werner and Yalikun introduced $C$-ball
floating bodies and, for Euclidean balls, the relative affine surface
area \cite{SchuettWernerYalikun2025}. Werner and Yalikun subsequently
introduced weighted ball floating bodies and $L_p$-relative surface
areas, the latter appearing in the limits of measure deficits, see
\cite{WernerYalikun2026}. The present construction contains these
bodies as exact special cases for uniform or continuously weighted
distributions $\mu$ on $C$-ball-convex supports. It also extends the
construction to arbitrary Borel probability measures $\mu$ and
arbitrary convex testing bodies $K$. In the reverse direction, the
geometric volume-derivative theorems from those works give sharp
small-level asymptotics for our depth regions. Very recent work of
Oliva, Werner and Yalikun \cite{OlivaWernerYalikun2026} develops the
exterior, illumination-body counterpart of this ball-convex
programme.

The main contributions are as follows.
\begin{enumerate}[label=(\roman*),leftmargin=2.2em]
\item The depth regions are identified with Minkowski erosions of $K$
  by high-mass shift sets. This yields convexity, upper
  semicontinuity, a convex-hull reduction, and a precise description
  of the unavoidable positive baseline of the raw depth.
\item The exact distribution-free smallest value if the depth is
  $1/h_T(K)$, where $h_T(K)$ is the translative Helly number of $K$.
  The corresponding exact finite-sample constant is
  $\lceil n/h_T(K)\rceil/n$.
\item Under the natural boundary-null condition on $\mu$, the depth
  $\de_K(y;\mu)$ is continuous in $y$. More strongly, if $\mu_n$
  converges weakly to $\mu$, then the depths converge uniformly.
\item The boundary of a nondegenerate depth region is described
  through its active translates. A normal-cone formula gives a sharp
  smoothness criterion.
\item Under dilation of the testing body, the limiting testing sets
  are the tangent cones of $K$. Smooth testing bodies recover
  halfspace depth $\HD$ in the limit, namely, $\de_{tK}$ decreases to
  halfspace depth $\HD$ uniformly with rate $O(t^{-1})$ as
  $t \to \infty$, and the resulting order is sharp.
\item The map taking $K$ to its depth region $D_\alpha^K(\mu)$ is not
  monotone in general. We also prove stability of the depth function
  and of regular depth regions under Hausdorff perturbations of the
  testing body.
\item The empirical depth is uniformly consistent. At a fixed point,
  its asymptotic law is Gaussian when the active translate is unique
  and is the negative supremum of a Gaussian process over the active
  set in the nonunique case.
\end{enumerate}


\section{Definitions and basic properties}

\subsection{Notation and \texorpdfstring{$K$}{K}-hulls}
\label{sec:notat-texorpdfstr-hu}

Let $\Kc^d$ denote the family of all nonempty compact convex subsets
of $\R^d$, including lower-dimensional sets. We call its elements
compact convex sets. Let $\Kc^d_\circ\subset\Kc^d$ denote those
members that have nonempty interior; only these sets are called convex
bodies. Unless explicitly stated otherwise, $K$ denotes the fixed
testing body and belongs to $\Kc^d_\circ$ throughout the paper.  We
write $\Pc(\R^d)$ for the space of Borel probability measures on
$\R^d$ and denote by $|K|$ the Lebesgue measure of $K$.  For $m\ge1$
and $r>0$, let $B_r^m:=\{z\in\R^m:\|z\|_2\le r\}$ be the
$m$-dimensional Euclidean ball of radius $r$. In the ambient space
$\R^d$ we abbreviate $B_r^d$ to $B_r$ and write $B=B_1^d$.  We write
$\kappa_m:=|B_1^m|$ for $m\ge1$ and use the convention $\kappa_0=1$.
For sets $A,B\subset\R^d$, write $A+B:=\{a+b:a\in A,\ b\in B\}$ and
$\check B=-B:=\{-b:b\in B\}$. We use the following reflected version of
the Minkowski difference:
\begin{equation}
 K\ominus B
 :=\{y:\check B+y\subset K\}
 =\bigcap_{x\in B}(K+x).
 \label{eq:minkowski-difference}
\end{equation}
The intersection, and hence $K\ominus B$, is set equal to $\R^d$ when
$B=\varnothing$. A set $Q\in\Kc^d$ is a \emph{Minkowski summand} of
$K$ if $K=Q+M$ for some $M\in\Kc^d$. Then $K\ominus\check{M}=Q$. 

For $K\in\Kc^d_\circ$ and $A\subset\R^d$, the $K$-hull of $A$ is
$\operatorname{conv}_K(A) :=\bigcap_{x:\,A\subset K+x}(K+x)$, with the
usual empty-index convention. A set is $K$-convex if it coincides with
its $K$-hull; see Figure~\ref{fig:khull}.

\begin{figure}[ht]
\centering
\begin{subfigure}{0.45\textwidth}
 \centering\includegraphics[width=\linewidth]{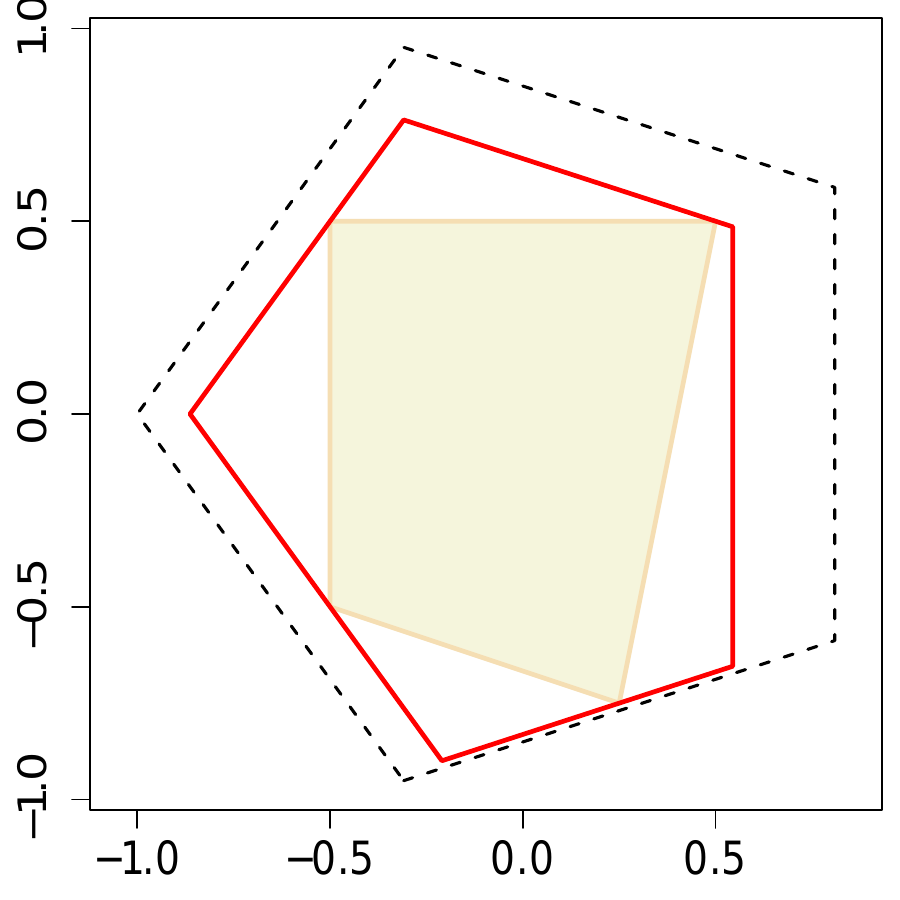}
 \caption{$K$ a regular pentagon.}
\end{subfigure}\hfill
\begin{subfigure}{0.45\textwidth}
 \centering\includegraphics[width=\linewidth]{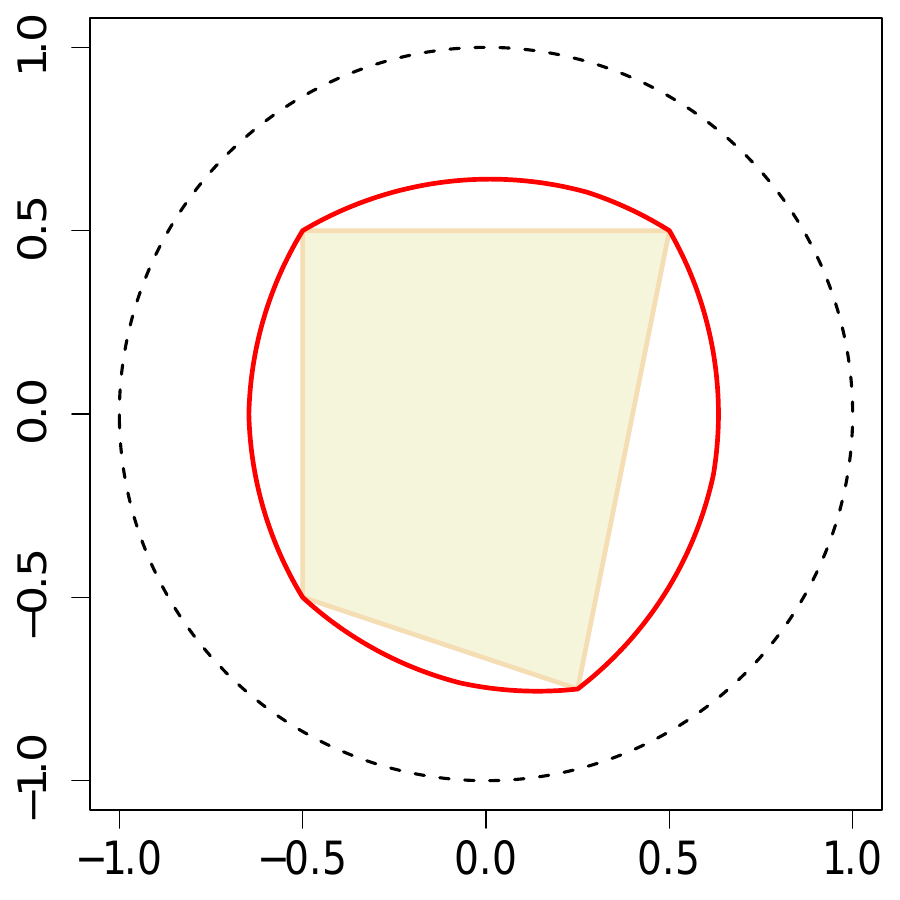}
 \caption{$K$ a Euclidean ball.}
\end{subfigure}
\caption{Section~\ref{sec:notat-texorpdfstr-hu}: The $K$-hull (thick red boundary) of a quadrilateral (beige)
for two testing bodies $K$ (black dashed boundary).}
\label{fig:khull}
\end{figure}

\subsection{Translation mass, depth, and depth regions}

Let $\mu$ be a Borel probability measure on $\R^d$, and let $K$ be the
fixed testing body specified above. Its \emph{translation-mass
  function} is $f_{K,\mu}(x):=\mu(K+x)$ for $x\in\R^d$. When $K$ and
$\mu$ are clear, we write simply $f$.

\begin{lemma}\label{lem:f-basic}
  The function $f$ is upper semicontinuous and satisfies $f(x)\to0$ as
  $\|x\|\to\infty$. Consequently,
  \begin{displaymath}
    \beta_{K,\mu}:=\sup_{x\in\R^d}f(x)
  \end{displaymath}
  is positive and attained, and the maximiser set
  \begin{displaymath}
    A_{K,\mu}:=\{x:f(x)=\beta_{K,\mu}\}
  \end{displaymath}
  is nonempty and compact.
\end{lemma}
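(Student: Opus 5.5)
The argument rests on three facts: $K$ is compact, $\mu$ is a finite Borel measure, and indicator functions of closed sets are upper semicontinuous.

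\emph{Upper semicontinuity.} Let $x_n\to x$. I will show $\limsup_n f(x_n)\le f(x)$. For $\varepsilon>0$, the set $K+x+B_\varepsilon$ is closed and contains $K+x_n$ once $\|x_n-x\|\le\varepsilon$. Hence
\[
\limsup_n f(x_n)\le \mu(K+x+B_\varepsilon).
\]
As $\varepsilon\downarrow0$, the sets $K+x+B_\varepsilon$ decrease to $\bigcap_{\varepsilon>0}(K+x+B_\varepsilon)=K+x$, because $K+x$ is closed. By continuity from above of the finite measure $\mu$, the right-hand side tends to $\mu(K+x)=f(x)$. (Alternatively, $\1_{K+x_n}(z)=\1_{K-z}(-x_n)$ pointwise, and reverse Fatou applies since the indicator of a closed set is upper semicontinuous.)

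\emph{Decay at infinity.} Let $R$ be such that $K\subset B_R$. Then $K+x\subset \R^d\setminus B_{\|x\|-R}$, so
\[
f(x)\le \mu\bigl(\R^d\setminus B_{\|x\|-R}\bigr)\to0 \quad\text{as }\|x\|\to\infty,
\]
by tightness of a single probability measure, i.e.\ continuity from above applied to the complements of growing balls.

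\emph{Positivity of $\beta_{K,\mu}$.} Since $K$ has nonempty interior, it contains a ball $B_r+c$ with $r>0$. Countably many translates $K+x_j$ then cover $\R^d$, for instance with $x_j$ ranging over the lattice $(2r/\sqrt d)\mathbb Z^d$ shifted by $-c$. Since $\mu(\R^d)=1$, some $f(x_j)>0$, so $\beta_{K,\mu}>0$.

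\emph{Attainment and compactness of $A_{K,\mu}$.} Choose $M$ with $f<\beta_{K,\mu}/2$ outside $B_M$. Then the supremum of $f$ equals its supremum over the compact ball $B_M$, and an upper semicontinuous function attains its maximum on a compact set. So $\beta_{K,\mu}$ is attained, and $A_{K,\mu}$ is nonempty and contained in $B_M$. Finally, $A_{K,\mu}=\{f\ge\beta_{K,\mu}\}$ is closed by upper semicontinuity, and being bounded it is compact.

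The only step needing any care is upper semicontinuity, which works precisely because translates of $K$ are closed and $\mu$ is finite. Everything else is routine.
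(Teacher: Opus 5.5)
Your proof is correct and follows essentially the same route as the paper: an $\eps$-enlargement plus continuity from above for upper semicontinuity, tightness for the decay, and upper semicontinuity together with decay for attainment and compactness of $A_{K,\mu}$. The only real difference is positivity, where you cover $\R^d$ by countably many translates and use countable subadditivity rather than taking a translate whose interior meets $\supp\mu$. One harmless slip in the decay step: the inclusion should read $K+x\subset\{z:\|z\|\ge\|x\|-R\}$, because $K+x$ can touch the closed ball $B_{\|x\|-R}$; the conclusion is unaffected.
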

\begin{pf}
  If $x_n\to x$, then for every $\eps>0$ and all sufficiently large
  $n$, $K+x_n\subset (K+x)+\eps B$, where $B$ is the closed Euclidean
  unit ball. Hence,
  \begin{displaymath}
    \limsup_n f(x_n)
    =\limsup_{n\to\infty}\mu(K+x_n)
    \le \mu((K+x)+\eps B).
  \end{displaymath}
  The right0-hand side converges to $f(x)$ as $\eps\downarrow0$.

  Choose a compact set $C$ with $\mu(C)>1-\eps$. Then $(K+x)\cap C$ is
  empty when $\|x\|$ is sufficiently large, so that
  $f(x)\le\mu(C^c)<\eps$.  Upper semicontinuity and decay at infinity
  imply attainment and compactness of the maximiser set. Positivity
  follows by choosing a translate whose interior meets $\supp\mu$.
\end{pf}

For $\beta\in[0,1]$, define the strict and nonstrict upper level sets
$L^{>}_\beta:=\{x:f(x)>\beta\}$ and
$L^{\ge}_\beta:=\{x:f(x)\ge\beta\}$. For $\alpha\in(0,1]$, the
  \emph{$K$-depth region} is
\begin{equation*}
  D_\alpha^K(\mu):=\bigcap_{x\in L^{>}_{1-\alpha}}(K+x).
\end{equation*}
The \emph{$K$-depth} of $y\in\R^d$ is
\begin{equation}
  \de_K(y;\mu):=1-M_K(y;\mu),
  \qquad
  M_K(y;\mu):=\sup_{x:\,y\notin K+x}\mu(K+x).
  \label{eq:depth}
\end{equation}
Thus $M_K(y;\mu)$ is the largest mass of a translate that misses $y$.

Small \cite[Definition~2.1]{Small1987} associates with a probability
measure $\mu$ and a prescribed class $\mathcal F$ of measurable sets the
$\mathcal F$-index
 $I_{\mathcal F}(y;\mu)
 :=\sup\{\mu(F):F\in\mathcal F,\ y\notin F\}$. For the translation class
$\mathcal F_K=\{K+x:x\in\R^d\}$, this index equals $M_K(y;\mu)$.
Consequently, the depth in \eqref{eq:depth} is precisely the
complementary normalisation
$\de_K(y;\mu)=1-I_{\mathcal F_K}(y;\mu)$. The complement is used so
that larger values indicate greater centrality and upper level sets are
central regions.

\begin{proposition}
  \label{prop:erosion}
  For every $\alpha\in(0,1]$,
  \begin{equation}
    D_\alpha^K(\mu)
    =K\ominus L^{>}_{1-\alpha}
    =K\ominus\cl\conv L^{>}_{1-\alpha}
    =\{y:\de_K(y;\mu)\ge\alpha\}.
    \label{eq:erosion-level}
  \end{equation}
  Consequently, $\de_K(\cdot;\mu)$ is upper semicontinuous and
  quasi-concave, and the regions $D_\alpha^K(\mu)$ are closed, convex,
  and decreasing in $\alpha$.
\end{proposition}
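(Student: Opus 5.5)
The plan is to verify the three equalities in \eqref{eq:erosion-level} one at a time, directly from the definitions, and then read off the stated consequences from the level-set description.

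\emph{First equality.} By \eqref{eq:minkowski-difference}, $K\ominus L^{>}_{1-\alpha}=\bigcap_{x\in L^{>}_{1-\alpha}}(K+x)$. This is literally the definition of $D_\alpha^K(\mu)$. If $L^{>}_{1-\alpha}=\varnothing$, both sides equal $\R^d$ by the stated empty-index convention.

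\emph{Second equality.} Write $L=L^{>}_{1-\alpha}$. Enlarging the index set can only shrink the intersection, so $K\ominus\cl\conv L\subset K\ominus L$. For the reverse inclusion, take $y\in K\ominus L$, so $y-x\in K$ for every $x\in L$.
\begin{itemize}
\item If $z=\sum_i\lambda_i x_i$ is a convex combination of points $x_i\in L$, then $y-z=\sum_i\lambda_i(y-x_i)\in K$ by convexity of $K$.
\item If $z_n\in\conv L$ and $z_n\to z$, then $y-z_n\in K$ and $y-z_n\to y-z$, so $y-z\in K$ because $K$ is closed.
\end{itemize}
Hence $y\in K+z$ for every $z\in\cl\conv L$, which gives $y\in K\ominus\cl\conv L$.

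\emph{Third equality.} The chain of equivalences is
\begin{displaymath}
\de_K(y;\mu)\ge\alpha
\iff M_K(y;\mu)\le1-\alpha
\iff \mu(K+x)\le 1-\alpha \text{ whenever } y\notin K+x .
\end{displaymath}
The last condition is, by contraposition, the statement that $y\in K+x$ whenever $f(x)>1-\alpha$. That says exactly $y\in\bigcap_{x\in L^{>}_{1-\alpha}}(K+x)=D_\alpha^K(\mu)$. The empty cases are consistent: if $L^{>}_{1-\alpha}=\varnothing$, every translate has mass at most $1-\alpha$, so $\de_K\ge\alpha$ everywhere and both sides equal $\R^d$. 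The supremum over an empty index set (when no translate misses $y$) cannot occur, since $K$ is bounded.

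\emph{Consequences.} Each $D_\alpha^K(\mu)$ is an intersection of translates of the closed convex set $K$, hence closed and convex. Since $L^{>}_{1-\alpha}$ grows as $\alpha$ increases, the regions decrease in $\alpha$.
\begin{itemize}
\item For upper semicontinuity, every upper level set $\{\de_K\ge\alpha\}$ with $\alpha\in(0,1]$ is closed. For $\alpha\le0$ the level set is all of $\R^d$, because $\de_K\ge0$ automatically ($M_K\le1$). So $\de_K$ is upper semicontinuous.
\item All upper level sets are convex, so $\de_K$ is quasi-concave.
\end{itemize}

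There is no real obstacle here. The only point requiring care is the bookkeeping of the empty-intersection convention and of levels outside $(0,1]$ in the semicontinuity claim.
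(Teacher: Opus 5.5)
Your proof is correct and follows the same route as the paper: the first identity is the definition via \eqref{eq:minkowski-difference}, the convex-hull reduction uses closedness and convexity of $K$, and the level-set identity is the contrapositive reading of the supremum defining $M_K$. Your extra bookkeeping (empty index sets, and levels $\alpha\le 0$ in the semicontinuity claim) only makes explicit what the paper leaves implicit; to finish that bookkeeping, also note that $\{\de_K\ge\alpha\}=\varnothing$ for $\alpha>1$.
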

\begin{pf}
  The first identity is \eqref{eq:minkowski-difference}. Since $K$ is
  closed and convex, it contains $\check L+y$ if and only if it
  contains $\cl\conv(\check L+y)$, proving the convex-hull
  reduction. Finally, $\de_K(y;\mu)<\alpha$ holds precisely when some
  translate missing $y$ has mass greater than $1-\alpha$, which is
  equivalent to $y\notin D_\alpha^K(\mu)$.
\end{pf}

\begin{proposition}
  \label{prop:baseline}
  If $f_{K,\mu}(x)>1-\alpha$ for some $x$, then
  $D_\alpha^K(\mu)$ is compact. We have
  \begin{equation}
    1-\beta_{K,\mu}\le \de_K(y;\mu)\le1,
    \qquad y\in\R^d.
    \label{eq:baseline}
  \end{equation}
  If $x_0\in A_{K,\mu}$ and $y\notin K+x_0$, then
  $\de_K(y;\mu)=1-\beta_{K,\mu}$. In particular,
  \begin{equation}
    D_\alpha^K(\mu)=\R^d
    \quad\text{for every }\alpha\le1-\beta_{K,\mu}.
    \label{eq:trivial-levels}
  \end{equation}
\end{proposition}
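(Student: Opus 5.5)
The argument splits into four short steps: compactness, the two-sided bound, the equality case, and the trivial levels.

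\emph{Compactness.} Suppose $f_{K,\mu}(x_1)>1-\alpha$ for some $x_1$. Then $x_1\in L^{>}_{1-\alpha}$. By Proposition~\ref{prop:erosion}, $D_\alpha^K(\mu)=\bigcap_{x\in L^{>}_{1-\alpha}}(K+x)\subset K+x_1$. The region $D_\alpha^K(\mu)$ is closed by Proposition~\ref{prop:erosion}, and it sits inside the compact set $K+x_1$, so it is compact.

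\emph{The bound \eqref{eq:baseline}.} The upper bound $\de_K(y;\mu)\le1$ holds because $M_K(y;\mu)\ge0$. We also use the convention $\sup\varnothing=0$, although the index set is never empty: for $\|x\|$ large, $K+x$ misses $y$ because $K$ is bounded. For the lower bound, every translate $K+x$ satisfies $\mu(K+x)=f(x)\le\beta_{K,\mu}$. Hence $M_K(y;\mu)\le\beta_{K,\mu}$, which gives $\de_K(y;\mu)\ge1-\beta_{K,\mu}$.

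\emph{Equality case.} Let $x_0\in A_{K,\mu}$; this set is nonempty by Lemma~\ref{lem:f-basic}. If $y\notin K+x_0$, then $x_0$ is admissible in the supremum defining $M_K(y;\mu)$. So $M_K(y;\mu)\ge f(x_0)=\beta_{K,\mu}$, and together with the previous step, $M_K(y;\mu)=\beta_{K,\mu}$. Therefore $\de_K(y;\mu)=1-\beta_{K,\mu}$.

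\emph{Trivial levels.} For $\alpha\le1-\beta_{K,\mu}$, the lower bound in \eqref{eq:baseline} gives $\de_K(y;\mu)\ge1-\beta_{K,\mu}\ge\alpha$ for every $y$. By the last identity in \eqref{eq:erosion-level}, every $y$ then lies in $D_\alpha^K(\mu)$. One can also see this directly: $L^{>}_{1-\alpha}\subset L^{>}_{\beta_{K,\mu}}=\varnothing$, and the empty-intersection convention gives $\R^d$.

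No step is a real obstacle. The only points needing care are that the supremum in $M_K$ is attained at $x_0$ because the maximum of $f$ is attained (Lemma~\ref{lem:f-basic}), and that the empty-index convention is applied consistently.
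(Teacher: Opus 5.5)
Your proof is correct and matches the paper's argument step for step: compactness from containment in a single compact translate, the baseline from $f\le\beta_{K,\mu}$, the equality case from admissibility of the maximiser $x_0$, and \eqref{eq:trivial-levels} from emptiness of $L^{>}_{1-\alpha}$ together with the empty-intersection convention. Your compactness step is slightly more careful than the paper's, which asserts that $D_\alpha^K(\mu)$ is nonempty; only nonemptiness of the index set $L^{>}_{1-\alpha}$ is needed, and the region itself may be empty, which is still compact.
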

\begin{pf}
  The first condition implies that $D_\alpha^K(\mu)$ is nonempty and
  so is compact being intersection of compact sets. 
  The upper bound in~\eqref{eq:baseline} is immediate. Since every
  admissible translate has mass at most $\beta_{K,\mu}$,
  \eqref{eq:baseline} follows. If a maximising translate misses $y$,
  then the defining supremum equals $\beta_{K,\mu}$. Formula
  \eqref{eq:trivial-levels} follows from the strict superlevel
  convention.  
\end{pf}

The  $K$-depth differs from most classical depths because it need not
vanish at infinity. 
The \emph{maximal $K$-depth} is denoted by 
\begin{displaymath}
  \alpha_K^*(\mu):=\sup_{y\in\R^d}\de_K(y;\mu).
\end{displaymath}

\begin{proposition}
  If $\alpha_K^*(\mu)>1-\beta_{K,\mu}$, then the \emph{$K$-median
    region}
  \[
    \Med_K(\mu):=\{y:\de_K(y;\mu)=\alpha_K^*(\mu)\}.
  \]
  is a nonempty compact convex set. If
  $\alpha_K^*(\mu)=1-\beta_{K,\mu}$, the depth is identically equal to
  its baseline.
\end{proposition}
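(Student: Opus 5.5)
Let $\alpha^*:=\alpha_K^*(\mu)$. The second assertion is immediate from \eqref{eq:baseline}: every depth value lies in $[1-\beta_{K,\mu},\alpha^*]$, so if $\alpha^*=1-\beta_{K,\mu}$ the depth is constant and equal to its baseline. The substance is the first assertion. We assume $\alpha^*>1-\beta_{K,\mu}$ and argue via the depth regions of Proposition~\ref{prop:erosion} and Proposition~\ref{prop:baseline}.

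We first note that $\Med_K(\mu)=\bigcap_{\alpha<\alpha^*}D_\alpha^K(\mu)\cap\{\de_K\ge\alpha^*\}$, and in fact $\Med_K(\mu)=D_{\alpha^*}^K(\mu)$. Indeed, by Proposition~\ref{prop:erosion}, $D_{\alpha^*}^K(\mu)=\{\de_K\ge\alpha^*\}$, and by definition of the supremum no point has depth exceeding $\alpha^*$. Hence convexity and closedness follow directly from Proposition~\ref{prop:erosion}.

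For compactness, pick $\alpha\in(1-\beta_{K,\mu},\alpha^*]$. Then $1-\alpha<\beta_{K,\mu}$, so any $x_0\in A_{K,\mu}$ (nonempty by Lemma~\ref{lem:f-basic}) satisfies $f(x_0)>1-\alpha$. Proposition~\ref{prop:baseline} then gives that $D_\alpha^K(\mu)$ is compact; more concretely, $D_\alpha^K(\mu)\subset K+x_0$. Taking $\alpha=\alpha^*$ shows that $\Med_K(\mu)$ is a closed subset of a compact set, hence compact.

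Nonemptiness is the only real point; it says the supremum $\alpha^*$ is attained. We use upper semicontinuity of $\de_K$ from Proposition~\ref{prop:erosion}. Choose $y_n$ with $\de_K(y_n;\mu)\to\alpha^*$. Since $\alpha^*>1-\beta_{K,\mu}$, eventually $\de_K(y_n;\mu)>1-\beta_{K,\mu}$. By the last claim of Proposition~\ref{prop:baseline}, this forces $y_n\in K+x_0$, a compact set. So a subsequence converges to some $y$, and upper semicontinuity gives $\de_K(y;\mu)\ge\limsup\de_K(y_n;\mu)=\alpha^*$. Hence $y\in\Med_K(\mu)$.

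Equivalently, $\Med_K(\mu)$ is a decreasing intersection of the nonempty compact sets $D_\alpha^K(\mu)$ for $\alpha\uparrow\alpha^*$. The main care needed is exactly this localisation of the near-maximisers in a fixed compact translate, which the strict inequality $\alpha^*>1-\beta_{K,\mu}$ provides.
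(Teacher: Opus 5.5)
Your proof is correct and follows the same route as the paper's. Like the paper, you localise a maximising sequence inside a maximising translate $K+x_0$ (via the baseline proposition), extract a convergent subsequence, and conclude by upper semicontinuity; closedness, convexity and compactness come from $\Med_K(\mu)=D_{\alpha_K^*(\mu)}^K(\mu)\subset K+x_0$, and the degenerate case follows from the baseline bounds \eqref{eq:baseline}.
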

\begin{pf}
  A maximising sequence with depth bounded away from the baseline is
  contained in a translate of $K$, which is compact. Upper
  semicontinuity yields a maximiser, and the upper level set at the
  maximal value is closed and convex. If the supremum equals the lower
  bound in \eqref{eq:baseline}, equality holds everywhere.
\end{pf}

\subsection{Invariance and comparison with halfspace depth}

The halfspace depth is
\begin{displaymath}
  \HD(y;\mu):=\inf_{H\ni y}\mu(H),
\end{displaymath}
where $H$ ranges over closed halfspaces. Its nonstrict upper
$\alpha$-level set is denoted by $D_\alpha^{\mathrm{HD}}(\mu)$.

\begin{proposition}[Basic equivariances and Tukey comparison]
  \label{prop:equivariance}
  Let $a,b\in\R^d$, let $A:\R^d\to\R^d$ be nonsingular, and let $\nu$
  be the image of $\mu$ under $z\mapsto Az+b$. Then
  \begin{align}
    \de_{K+a}(y;\mu)
    &=\de_K(y;\mu),
      \label{eq:K-translation}\\
    \de_{AK}(Ay+b;\nu)
    &=\de_K(y;\mu),
      \label{eq:affine-depth}\\
    D_\alpha^{AK}(\nu)
    &=A D_\alpha^K(\mu)+b.
      \label{eq:affine-region}
  \end{align}
  Moreover, $D_\alpha^{\mathrm{HD}}(\mu)\subset D_\alpha^K(\mu)$, and
  $\HD(y;\mu)\le\de_K(y;\mu)$.
\end{proposition}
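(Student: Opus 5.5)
The plan is to work directly from the definition \eqref{eq:depth} of $\de_K$ through $M_K$, transporting translates of $K$ under the relevant maps, and then to read off the region statements from Proposition~\ref{prop:erosion}.

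For \eqref{eq:K-translation}, the family of translates is unchanged: $\{K+a+x:x\in\R^d\}=\{K+x':x'\in\R^d\}$ via $x'=a+x$. Both the condition $y\notin K+a+x$ and the mass $\mu(K+a+x)$ depend only on the set $K+x'$. Hence the two suprema defining $M_{K+a}(y;\mu)$ and $M_K(y;\mu)$ are taken over the same collection of sets, and they coincide.

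For \eqref{eq:affine-depth}, write $T(z)=Az+b$. Since $T$ is a bijection,
\begin{displaymath}
  T(K+x)=AK+(Ax+b),
\end{displaymath}
and $x\mapsto Ax+b$ is a bijection of $\R^d$. So the translates of $AK$ are exactly the images $T(K+x)$. Two facts then transfer the supremum term by term:
\begin{itemize}[leftmargin=2.2em]
\item by injectivity of $T$, we have $Ty\notin T(K+x)$ if and only if $y\notin K+x$;
\item by definition of the image measure, $\nu(T(K+x))=\mu(T^{-1}T(K+x))=\mu(K+x)$.
\end{itemize}
Therefore $M_{AK}(Ty;\nu)=M_K(y;\mu)$. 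Equation \eqref{eq:affine-region} follows by taking level sets through the last identity in \eqref{eq:erosion-level}: $u\in D^{AK}_\alpha(\nu)$ if and only if $\de_K(T^{-1}u;\mu)\ge\alpha$, that is, if and only if $u\in T(D^K_\alpha(\mu))$.

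For the Tukey comparison, it suffices to prove $\HD(y;\mu)\le\de_K(y;\mu)$; the region inclusion then follows from the level-set descriptions. Fix $y$ and a translate $K+x$ with $y\notin K+x$. Because $K+x$ is compact and convex, strict separation gives a closed halfspace $H$ with $y\in H$ and $H\cap(K+x)=\varnothing$. Then
\begin{displaymath}
  \mu(K+x)\le 1-\mu(H)\le 1-\HD(y;\mu).
\end{displaymath}
Taking the supremum over admissible $x$ yields $M_K(y;\mu)\le1-\HD(y;\mu)$, which is the claim.

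No step looks difficult. The only points needing care are that the translate families correspond bijectively, which requires $A$ to be nonsingular, and that the separating halfspace can be taken closed and containing $y$. The latter is available because separation from a compact convex set is strict.
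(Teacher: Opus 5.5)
Your proposal is correct and follows essentially the same route as the paper: you obtain the equivariances from the bijective correspondence $T(K+x)=AK+(Ax+b)$ together with $\nu(T(K+x))=\mu(K+x)$, and the Tukey comparison from strict separation of $y$ from a missed translate. The only difference is cosmetic: you prove the pointwise bound $\HD(y;\mu)\le\de_K(y;\mu)$ directly and deduce the region inclusion from \eqref{eq:erosion-level}, whereas the paper proves the region inclusion by contradiction and leaves the pointwise bound to be read off from it.
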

\begin{pf}
  Translating $K$ merely reparametrises its translates, proving
  \eqref{eq:K-translation}. The affine image of $K+x$ is $AK+Ax+b$,
  which proves \eqref{eq:affine-depth} and \eqref{eq:affine-region}.

  Let $y\in D_\alpha^{\mathrm{HD}}(\mu)$ and suppose that $y\notin K+x$ for a
  translate with $\mu(K+x)>1-\alpha$. Strict separation gives a closed
  halfspace $H$ with $K+x\subset H$ and $y\notin H$. There exists a
  closed halfspace $G$ containing $y$ and disjoint from
  $H$. Therefore, $\mu(G)\le1-\mu(H)<\alpha$, contradicting
  $\HD(y;\mu)\ge\alpha$. Thus $y\in D_\alpha^K(\mu)$.
\end{pf}

\subsection{The sharp centerpoint constant}

The comparison with halfspace depth yields the universal lower bound
$\alpha_K^*(\mu)\geq 1/(d+1)$, which follows from the result available
for the half-space depth and going back to \cite{MR21962}.  The truly
sharp lower bound is determined by the intersection combinatorics of
the translates of the testing body. This argument is the
Helly-number version of the centerpoint principle for Small's
set-family index; see also \cite{bas:oer17} for various Helly-type
centerpoint results.

\begin{definition}
  The \emph{translative Helly number} $h_T(K)$ is the smallest integer
  $h$ such that every finite family of translates of $K$ with empty
  intersection contains a subfamily of at most $h$ members whose
  intersection is empty.
\end{definition}

Ordinary Helly's theorem (see \cite[Theorem~1.1.6]{Schneider2014})
gives
\begin{equation} 
  2\le h_T(K)\le d+1.
\label{eq:translative-helly-range}
\end{equation}
The lower bound follows because every single translate is nonempty and
two sufficiently distant translates are disjoint.

\begin{theorem}
  \label{thm:sharp-centerpoint}
  For every convex body $K$, the infimum of $\alpha_K^*(\mu)$ over all
  probability measure $\mu\in\Pc(\R^d)$ is attained and is equal to
  $1/h_T(K)$. Furthermore, 
  \begin{equation}
    D_{1/h_T(K)}^K(\mu)\ne\varnothing.
    \label{eq:centerpoint-lower}
  \end{equation}
\end{theorem}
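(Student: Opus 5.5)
The proof has two parts: a lower bound valid for every $\mu$, and a matching extremal measure. Write $h=h_T(K)$.

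\emph{Lower bound.} By Proposition~\ref{prop:erosion}, $D_{1/h}^K(\mu)=\bigcap_{x\in L^{>}_{1-1/h}}(K+x)$, so it suffices to show that this family of translates has nonempty intersection. Each member is compact. It is therefore enough to check that every finite subfamily has nonempty intersection; the finite intersection property then gives nonemptiness of the whole intersection. If $L^{>}_{1-1/h}$ is empty, the region is $\R^d$ and there is nothing to prove.

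Suppose some finite subfamily had empty intersection. By the definition of $h_T(K)$ it would contain at most $h$ translates $K+x_1,\dots,K+x_m$, with $m\le h$, having empty intersection. Then the complements $(K+x_i)^c$ cover $\R^d$, so
\[
1\le\sum_{i=1}^m\mu\bigl((K+x_i)^c\bigr)<m\cdot\tfrac1h\le1,
\]
which is a contradiction. Hence $D_{1/h}^K(\mu)\neq\varnothing$, which is \eqref{eq:centerpoint-lower}.

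Any $y$ in this region has $\de_K(y;\mu)\ge 1/h$, so $\alpha_K^*(\mu)\ge1/h$ for every $\mu$. The region is closed, so the level $\ge 1/h$ is genuinely attained and not merely approached.

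\emph{Extremal measure.} By minimality of $h$, there is a finite family of translates with empty intersection in which every subfamily of at most $h-1$ members has nonempty intersection. Pass to a subfamily with empty intersection of minimal size. Its size is at most $h$ by the Helly property, and at least $h$ by the choice of family, so it is exactly $h$. Call it $K+x_1,\dots,K+x_h$; every $h-1$ of these members intersect.

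For each $j$, choose a point $p_j\in\bigcap_{i\ne j}(K+x_i)$. Then $p_j\notin K+x_j$, since otherwise $p_j$ would lie in all $h$ translates. Let $\mu=\frac1h\sum_j\delta_{p_j}$. The translate $K+x_j$ contains every $p_i$ with $i\ne j$, so $\mu(K+x_j)\ge 1-1/h$.

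Now take any $y$. Since the $h$ translates have empty intersection, $y\notin K+x_j$ for some $j$. Hence $M_K(y;\mu)\ge 1-1/h$ and $\de_K(y;\mu)\le1/h$. Therefore $\alpha_K^*(\mu)\le1/h$, and combined with the lower bound $\alpha_K^*(\mu)=1/h$. So the infimum over $\Pc(\R^d)$ equals $1/h$ and is attained at this $\mu$.

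\emph{Main obstacle.} The delicate step is the extraction of a minimal empty-intersection family of size exactly $h$ in which all $(h-1)$-subfamilies intersect. This follows directly from the definition of $h_T(K)$ as a smallest integer: if no such configuration existed, $h-1$ would also satisfy the Helly property, contradicting minimality. The constraint $h\ge2$ from \eqref{eq:translative-helly-range} ensures the construction is nondegenerate.
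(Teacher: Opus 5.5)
Your proof is correct and follows the paper's argument. The lower bound uses the same union-bound contradiction via the Helly number, and the upper bound uses the same extremal measure $\frac1h\sum_j\delta_{p_j}$ built from $h$ translates with empty intersection whose proper subfamilies all intersect; your explicit extraction of such a family of size exactly $h$ spells out a step the paper only asserts.
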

\begin{pf}
  Put $h=h_T(K)$ and consider the family
  $\mathcal G:=\{K+x:\mu(K+x)>1-1/h\}$. If intersection
  $D_{1/h}^K(\mu)$ is empty, then compactness of $K+x$ first yields a
  finite subfamily with empty intersection, and the definition of $h$
  then yields $G_1,\ldots,G_m$ in $\mathcal G$, with $m\le h$, whose
  intersection is empty. Their complements cover $\R^d$, whereas
  $\mu(G_i^c)<1/h$. Consequently,
  \begin{displaymath}
    1 =\mu\left(\bigcup_{i=1}^mG_i^c\right) \le\sum_{i=1}^m\mu(G_i^c)
    <\frac{m}{h} \le1,
  \end{displaymath}
  a contradiction. This proves \eqref{eq:centerpoint-lower}.
  Minimality of $h=h_T(K)$ yields translates $G_i=K+x_i$, with
  $i=1,\ldots,h$, such that their total intersection is empty while
  every proper subfamily has nonempty intersection. Choose $p_i$ from
  the intersection of $G_j$, $j\neq i$.  The point $p_i$ does not
  belong to $G_i$, since otherwise it would belong to all $h$
  translates. For $\mu_0=\frac1h\sum_{i=1}^h\delta_{p_i}$, each $G_i$
  contains precisely the selected points other than $p_i$, so
  $\mu_0(G_i)=1-1/h$. Given $y\in\R^d$, at least one $G_i$ misses $y$.
  Then $\de_K(y;\mu_0)\le1-\mu_0(G_i)=1/h$. Together with the lower
  bound already proved, this shows that $\alpha_K^*(\mu_0)=1/h$.
\end{pf}

\begin{corollary}[Exact finite-sample constant]
  \label{cor:empirical-centerpoint}
  Let $\mu_n=\frac1n\sum_{j=1}^n\delta_{X_j}$ be the empirical measure
  of $X_1, \dots, X_n \in \R^d$, where repeated observations are
  allowed. Then
  \begin{equation}
    \inf_{X_1,\ldots,X_n\in\R^d}\alpha_K^*(\mu_n)
    =\frac{\lceil n/h_T(K)\rceil}{n}.
    \label{eq:empirical-centerpoint}
  \end{equation}
\end{corollary}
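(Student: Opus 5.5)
The argument has two halves: a lower bound valid for every empirical measure, and a configuration attaining it. Both are discrete refinements of the proof of Theorem~\ref{thm:sharp-centerpoint}. Throughout, put $h=h_T(K)$ and $k=\lceil n/h\rceil$.

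\emph{Lower bound.} For an empirical measure $\mu_n$, every mass $\mu_n(K+x)$ is an integer multiple of $1/n$, so $\de_K(y;\mu_n)$ is also a multiple of $1/n$. I would show that $D^K_{k/n}(\mu_n)\ne\varnothing$ by repeating the Helly argument with threshold $1-k/n$. Consider the family $\mathcal G=\{K+x:\mu_n(K+x)>1-k/n\}$. Because masses lie on the grid $\{0,1/n,\dots,1\}$, each $G\in\mathcal G$ satisfies $\mu_n(G^c)<k/n$, which means $\mu_n(G^c)\le (k-1)/n$. If the intersection of $\mathcal G$ were empty, compactness and the definition of $h$ would give at most $h$ members whose complements cover $\R^d$. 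Then
\[
1\le h(k-1)/n<1,
\]
where the last inequality holds because $k-1<n/h$. This is a contradiction. The case where $\mathcal G$ is empty is trivial. Hence some $y$ has $\de_K(y;\mu_n)\ge k/n$, and so $\alpha_K^*(\mu_n)\ge k/n$.

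\emph{Upper bound by construction.} Take the minimal configuration from the proof of Theorem~\ref{thm:sharp-centerpoint}: translates $G_1,\dots,G_h$ with empty total intersection, and points $p_i\in\bigcap_{j\ne i}G_j\setminus G_i$. Write $n=qh+r$ with $0\le r<h$. Place $n$ observations at the points $p_i$ with multiplicities $m_i\in\{q,q+1\}$, all as equal as possible; then the largest multiplicity is exactly $k$, since $k=q+1$ if $r>0$ and $k=q$ if $r=0$. Each $G_i$ contains every observation except the $m_i$ copies of $p_i$, so $\mu_n(G_i)=1-m_i/n$. Every $y$ misses some $G_i$, which gives $\de_K(y;\mu_n)\le m_i/n\le k/n$. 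Thus $\alpha_K^*(\mu_n)\le k/n$, and the infimum in \eqref{eq:empirical-centerpoint} is attained.

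\emph{Main obstacle.} The construction bound $m_i/n\le k/n$ is too crude when the maximum multiplicity is not attained at the index missed by $y$, but that only strengthens the inequality, so it is harmless. The real care point is the lower bound: the integrality step must be written as $\mu_n(G^c)<k/n\Rightarrow\mu_n(G^c)\le(k-1)/n$, and the case $n<h$ must be checked. In that case $k=1$, each $G\in\mathcal G$ has $\mu_n(G)>1-1/n$ and so has full mass, and the argument gives $1\le 0$, a contradiction. So that case is consistent with the general argument. Repeated observations cause no trouble, since only the counts enter.
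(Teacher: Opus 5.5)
Your proof is correct and follows the paper's argument. The upper bound is identical: the same balanced multiplicities $\lfloor n/h\rfloor$ or $\lceil n/h\rceil$ placed at the points $p_i$ from the proof of Theorem~\ref{thm:sharp-centerpoint}. The lower bound differs only slightly: the paper does not re-run the Helly argument at threshold $k/n$, but rounds the bound $\alpha_K^*(\mu_n)\ge 1/h$ from Theorem~\ref{thm:sharp-centerpoint} up to the grid of multiples of $1/n$ on which all empirical depths lie. Your inlined version, using the sharpened bound $\mu_n(G^c)\le (k-1)/n$, is equally valid and yields $D^K_{k/n}(\mu_n)\ne\varnothing$ directly.
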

\begin{pf}
  Write $h=h_T(K)$. Every mass $\mu_n(K+x)$ is an integer multiple of
  $1/n$. Hence every value of the empirical depth, and also its
  maximum, belongs to the same finite
  grid. Theorem~\ref{thm:sharp-centerpoint} gives
  $\alpha_K^*(\mu_n)\ge1/h$, and therefore
  $\alpha_K^*(\mu_n)\ge \lceil n/h\rceil/n$. For sharpness, use the
  points $p_1,\ldots,p_h$ from the proof of
  Theorem~\ref{thm:sharp-centerpoint}. Choose nonnegative integers
  $n_1,\ldots,n_h$ with sum $n$ such that every $n_i$ is either
  $\lfloor n/h\rfloor$ or $\lceil n/h\rceil$, and place $n_i$
  observations at $p_i$. Then $\mu_n(G_i)=1-n_i/n$.  For each $y$, one
  of the sets $G_i$ misses $y$, and hence
  $\de_K(y;\mu_n)\le n_i/n \le \lceil n/h\rceil/n$. This proves
  equality in \eqref{eq:empirical-centerpoint}.
\end{pf}

\begin{example}
  If $K$ is a parallelotope, then $h_T(K)=2$. Indeed, after an affine
  change of coordinates, its translates are axis-parallel boxes. A
  pairwise intersecting finite family of such boxes has intersecting
  coordinate projections, and the one-dimensional Helly property for
  intervals gives a common point. 
  Proposition~\ref{prop:rolling-helly} below shows, in contrast, that
  a rolling-ball condition for smooth bodies $K$ already forces
  $h_T(K)=d+1$.
\end{example}

\subsection{Continuity}

The measure $\mu$ is \emph{$K$-smooth} if $\mu(\partial(K+x))=0$ for
all $x\in\R^d$. Every measure that is absolutely continuous with
respect to Lebesgue measure is $K$-smooth.

\begin{proposition}
  \label{prop:depth-continuity}
  If $\mu$ is $K$-smooth, then $f_{K,\mu}$ and $y\mapsto\de_K(y;\mu)$
  are continuous.  Consequently, every point
  $y\in\partial D_\alpha^K(\mu)$ satisfies $\de_K(y;\mu)=\alpha$.
\end{proposition}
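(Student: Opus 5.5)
We first prove continuity of $f=f_{K,\mu}$, then obtain continuity of the depth from a two-sided comparison of $M_K(\cdot;\mu)$ at nearby points, and finally read off the boundary statement. Upper semicontinuity of $f$ is Lemma~\ref{lem:f-basic}. For lower semicontinuity, let $x_n\to x$. Every point $z\in\Int(K+x)$ lies in $K+x_n$ for all large $n$, since $z-x\in\Int K$ and $z-x_n\to z-x$. Fatou's lemma applied to indicators therefore gives $\liminf_n\mu(K+x_n)\ge\mu(\Int(K+x))$. This equals $\mu(K+x)$ because $\mu(\partial(K+x))=0$. Hence $f$ is continuous.

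For the depth, upper semicontinuity of $\de_K$ is Proposition~\ref{prop:erosion}, so it suffices to show that $M_K(\cdot;\mu)$ is upper semicontinuous. Fix $y$ and $\eps>0$, and let $y_n\to y$. Pick translates $K+x_n\not\ni y_n$ with $\mu(K+x_n)>M_K(y_n;\mu)-\eps$. We may assume $\limsup_n M_K(y_n;\mu)>0$. Then, by decay of $f$ at infinity (Lemma~\ref{lem:f-basic}), the $x_n$ along a subsequence realising the limsup stay bounded, and we may assume $x_n\to x$. The limit satisfies $y-x\notin\Int K$, since otherwise $y_n-x_n\in\Int K$ for large $n$. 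So $y$ lies in $\partial(K+x)$ or outside $K+x$, and continuity of $f$ gives $\limsup_n M_K(y_n;\mu)\le f(x)+\eps$.

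\textbf{The main obstacle} is the boundary case $y\in\partial(K+x)$, because then $K+x$ itself is not admissible for $y$. We fix it by pushing the translate slightly. Choose $c\in\Int K$ and set $x'=x+s(x+c-y)$ for small $s>0$. Then $K+x'$ is the image of $K+x$ under the homothety with centre $y$ and ratio $1+s$, composed with a small shift. More precisely, $y\notin K+x'$ iff $y-x'\notin K$. Writing $y-x'=(y-x)-s(x+c-y)$, which lies on the ray from $c$ through $y-x$ beyond the boundary point $y-x$, we see that $y-x'$ lies outside $K$ by convexity, since $c\in\Int K$. Thus $y\notin K+x'$, so $M_K(y;\mu)\ge f(x')$. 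Letting $s\downarrow0$ and using continuity of $f$ gives $M_K(y;\mu)\ge f(x)$. Hence $\limsup_n M_K(y_n;\mu)\le M_K(y;\mu)+\eps$, and $\de_K$ is continuous.

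Finally, let $y\in\partial D_\alpha^K(\mu)$. The region is closed, so $\de_K(y;\mu)\ge\alpha$. Points $y_n\notin D_\alpha^K(\mu)$ with $y_n\to y$ satisfy $\de_K(y_n;\mu)<\alpha$, and continuity then yields $\de_K(y;\mu)\le\alpha$. Therefore $\de_K(y;\mu)=\alpha$.
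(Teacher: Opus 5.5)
Your proof is correct and follows essentially the same route as the paper. For the upper bound on $\limsup_n M_K(y_n;\mu)$, both use compactness from the decay of $f$ together with closedness of the relation $y\notin\Int K+x$. Both also push a boundary-touching translate slightly (you make this explicit via the interior point $c$) to show that such translates are limits of admissible ones; this is exactly the step by which the paper identifies $M_K$ with its closed-constraint version $\overline M$.

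The differences are cosmetic. You obtain lower semicontinuity of $M_K$ from closedness of the regions in Proposition~\ref{prop:erosion}, where the paper uses the co-translation $x_n=x+y_n-y$. One small point: to get boundedness of the $x_n$ you should take $\eps<\limsup_n M_K(y_n;\mu)$, since the complementary case is trivial.
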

\begin{pf}
  For $x_n\to x$, the indicators $\1_{K+x_n}$ converge pointwise to
  $\1_{K+x}$ outside $\partial(K+x)$. Dominated convergence confirms
  the continuity of $f_{K,\mu}$.
  
  Put $\overline M(y):=\max\{f(x):y\notin\Int K+x\}$. The maximum is
  attained because the feasible set is closed and $f$ vanishes at
  infinity. We show that $\overline M$ is continuous.  Let $y_n\to
  y$. For an upper bound, take maximisers $x_n$. If $f(x_n)\to0$,
  there is nothing to prove. Assume that $f(x_n)$ has a subsequence
  which is bounded away from zero and pass to this subsequence. Then
  the sequence $(x_n)$ is bounded. Closedness of
  $\{(y,x):y\notin\Int K+x\}$ gives $y\notin\Int K+x$, and continuity
  of $f$ yields $\limsup_n\overline M(y_n)\le\overline M(y)$. For a
  lower bound, fix $x$ such that $y\notin \Int K+x$ and set
  $x_n=x+y_n-y$. Then $y_n-x_n=y-x$, so $x_n$ is feasible for $y_n$,
  and $f(x_n)\to f(x)$. Taking the supremum over feasible $x$ gives
  lower semicontinuity. Hence $\de_K=1-\overline M$ is continuous.
  
  The supremum in \eqref{eq:depth} equals $\overline M(y)$. Indeed, a
  translate $\Int K+x$ can be moved arbitrarily slightly so that it
  misses $y$, and the mass changes continuously.  The boundary claim
  follows from~\eqref{eq:erosion-level}.
\end{pf}

The next lemma supplies a uniform form of the usual continuity-set
principle. For a set $A$, write $A^r:=A+rB$ and
$A^{-r}:=\{z:z+rB\subset A\}$.

\begin{lemma}
  \label{lem:uniform-translates}
  If $\mu_n$ converges weakly to $\mu$ and $\mu$ is $K$-smooth, then
  \begin{displaymath}
    \sup_{x\in\R^d}
    \bigl|\mu_n(K+x)-\mu(K+x)\bigr|\to 0.
  \end{displaymath}
\end{lemma}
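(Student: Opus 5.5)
The plan is a sandwich between inner and outer parallel sets, combined with compactness in the translation parameter; the notation $A^r$, $A^{-r}$ introduced just before the statement is meant for exactly this. Fix $\eps>0$. By tightness of the weakly convergent sequence, choose a compact set $C$ with $\mu(C^c)<\eps$ and $\mu_n(C^c)<\eps$ for all $n$. Then for $x$ outside some compact set $X$ (those $x$ with $(K+x)^{1}\cap C=\varnothing$), both $\mu(K+x)$ and $\mu_n(K+x)$ are below $\eps$, so the supremum over $x\notin X$ is at most $\eps$. It therefore suffices to control $x\in X$.

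On the compact set $X$, I would use $K$-smoothness to pick $r>0$ such that
\[
\sup_{x\in X}\bigl[\mu((K+x)^r)-\mu((K+x)^{-r})\bigr]<\eps .
\]
This needs an argument. For fixed $x$, the difference decreases to $\mu(\partial(K+x))=0$ as $r\downarrow0$. Indeed, $\bigcap_r (K+x)^r=K+x$, and $\bigcup_r (K+x)^{-r}=\Int(K+x)$, since $K$ is a convex body. To make the choice of $r$ uniform, I would argue by contradiction: suppose there are $x_k\in X$ and $r_k\downarrow0$ with the difference at least $\eps$. Pass to a subsequence $x_k\to x$. For any fixed $s>0$ and large $k$,
\[
(K+x_k)^{r_k}\subset (K+x)^{s}, \qquad (K+x_k)^{-r_k}\supset (K+x)^{-s}.
\]
This gives $\mu((K+x)^s)-\mu((K+x)^{-s})\ge\eps$ for every $s$, contradicting the pointwise limit.

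Next, pass to a finite net. Let $x_1,\dots,x_N$ be an $r/2$-net of $X$. For $x\in X$ with $\|x-x_i\|\le r/2$, we have
\[
(K+x_i)^{-r/2}\subset K+x\subset (K+x_i)^{r/2}.
\]
Weak convergence is needed only at the finitely many sets $(K+x_i)^{\pm r/2}$. The portmanteau theorem gives $\limsup_n\mu_n(F)\le\mu(F)$ for the closed sets $(K+x_i)^{r/2}$, and $\liminf_n\mu_n(G)\ge\mu(G)$ for the open sets $G=\Int (K+x_i)^{-r/2}$. I would use the open interiors to avoid boundary issues, noting that $(K+x_i)^{-r}\subset\Int (K+x_i)^{-r/2}$. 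Hence, for large $n$ and all $x\in X$,
\[
\mu_n(K+x)\le \mu((K+x_i)^{r/2})+\eps\le \mu((K+x)^{r})+\eps\le\mu(K+x)+2\eps .
\]
Symmetrically, $\mu_n(K+x)\ge \mu((K+x)^{-r})-\eps\ge\mu(K+x)-2\eps$.

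The main obstacle is the uniform choice of $r$ in the second step. The compactness argument above should handle it, and it is the one place where the full $K$-smoothness (for all $x$) is used.
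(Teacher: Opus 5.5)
Your proof is correct, but its second half takes a different route from the paper's. The paper also begins with a compactness-and-contradiction argument, phrased as uniform smallness of the boundary band $b_K(r)=\sup_{x\in\R^d}\mu((\partial K+x)^r)$ over \emph{all} shifts; tightness of $\mu$ alone rules out escaping shifts. It then replaces your finite-net and portmanteau step with the Prokhorov metric. If $\rho_n$ is the Prokhorov distance, then $\mu((K+x)^{-\rho_n})-\rho_n\le\mu_n(K+x)\le\mu((K+x)^{\rho_n})+\rho_n$ holds for every $x$ simultaneously. Since both boundary layers lie in $(\partial K+x)^{\rho_n}$, this gives the explicit bound $\rho_n+b_K(\rho_n)$ directly.

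Your version is more elementary, because it uses the portmanteau theorem on only finitely many sets. The cost is a separate truncation of the shift space, which needs uniform tightness of $(\mu_n)$, and you obtain no rate. The paper's version avoids the truncation and yields a quantitative estimate in terms of $\rho_n$.

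One detail in your lower bound needs a small fix. The inclusion you note, $(K+x_i)^{-r}\subset\Int(K+x_i)^{-r/2}$, is true, but your chain actually needs $(K+x)^{-r}\subset\Int(K+x_i)^{-r/2}$. This fails when $\|x-x_i\|=r/2$ exactly. For example, take $K=[-1,1]$, $r=1$ and $x=x_i+1/2$: then $(K+x)^{-1}=\{x\}$ is an endpoint of $(K+x_i)^{-1/2}$, not an interior point. Take the net from a finite subcover of $X$ by \emph{open} balls of radius $r/2$, so that $\|x-x_i\|<r/2$. Then the required inclusion holds and the rest of your argument goes through unchanged.
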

\begin{pf}
  We first claim that
  $b_K(r) :=\sup_{x\in\R^d}\mu\bigl((\partial K+x)^r\bigr) \to 0$ as
  $r\downarrow0$. If not, there are $r_j\downarrow0$ and $x_j$ for
  which the displayed masses stay bounded away from zero. Tightness
  excludes $\|x_j\|\to\infty$. Along a subsequence, $x_j\to x$, and
  \[
    (\partial K+x_j)^{r_j} \subset(\partial K+x)^{r_j+\|x_j-x\|}.
  \]
  Continuity from above and $K$-smoothness contradict the assumed
  lower bound.

  Let $\rho_n$ be the Prokhorov distance between $\mu_n$ and $\mu$, so
  $\rho_n\to0$. The defining inequalities for the Prokhorov metric
  give, for every $x$, that
  $\mu_n(K+x) \le \mu((K+x)^{\rho_n})+\rho_n$, and
  $\mu_n(K+x) \ge \mu((K+x)^{-\rho_n})-\rho_n$. The outer and inner
  boundary layers in these inequalities are contained in
  $(\partial K+x)^{\rho_n}$. Therefore,
  $\sup_x|\mu_n(K+x)-\mu(K+x)| \le \rho_n+b_K(\rho_n)\to 0$.
\end{pf}

\begin{theorem}
  \label{thm:joint-continuity}
  If $\mu_n$ converges weakly to $\mu$, $\mu$ is $K$-smooth, and
  $y_n\to y$, then 
  \begin{equation}
    \sup_{z\in\R^d}
    |\de_K(z;\mu_n)-\de_K(z;\mu)|
    \le \sup_{x\in\R^d}|\mu_n(K+x)-\mu(K+x)| \to 0.
    \label{eq:depth-lipschitz-measure}
\end{equation}
\end{theorem}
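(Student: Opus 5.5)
The inequality in \eqref{eq:depth-lipschitz-measure} is deterministic, and the convergence to zero is then supplied by Lemma~\ref{lem:uniform-translates}. Set
\[
  \eps_n:=\sup_{x\in\R^d}|\mu_n(K+x)-\mu(K+x)|.
\]
Fix $z\in\R^d$. The feasible set of translations $\{x:z\notin K+x\}$ in \eqref{eq:depth} depends only on $z$ and $K$, not on the measure. For every feasible $x$ we therefore have
\[
  \mu_n(K+x)\le\mu(K+x)+\eps_n\le M_K(z;\mu)+\eps_n .
\]
Taking the supremum over feasible $x$ gives $M_K(z;\mu_n)\le M_K(z;\mu)+\eps_n$. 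Exchanging the roles of $\mu_n$ and $\mu$ gives the reverse bound. Hence
\[
  |\de_K(z;\mu_n)-\de_K(z;\mu)|=|M_K(z;\mu_n)-M_K(z;\mu)|\le\eps_n ,
\]
and taking the supremum over $z$ yields the inequality in \eqref{eq:depth-lipschitz-measure}. This step uses only the elementary fact that the supremum of a family of functions is $1$-Lipschitz in the sup norm. No smoothness is needed here, and it applies even when the suprema are not attained.

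The convergence $\eps_n\to0$ is exactly Lemma~\ref{lem:uniform-translates}, which uses that $\mu_n\to\mu$ weakly and that $\mu$ is $K$-smooth.

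The statement also mentions $y_n\to y$, so I would add the joint convergence $\de_K(y_n;\mu_n)\to\de_K(y;\mu)$. I would split it by the triangle inequality:
\[
  |\de_K(y_n;\mu_n)-\de_K(y;\mu)|\le\sup_z|\de_K(z;\mu_n)-\de_K(z;\mu)|+|\de_K(y_n;\mu)-\de_K(y;\mu)| .
\]
The first term is at most $\eps_n\to0$ by the bound just proved. The second term tends to zero by the continuity of $\de_K(\cdot;\mu)$ for $K$-smooth $\mu$ (Proposition~\ref{prop:depth-continuity}).

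There is no real obstacle. The only point needing care is that the feasible sets coincide for the two measures, so the suprema can be compared term by term. The substantive uniformity over all translates was already handled in Lemma~\ref{lem:uniform-translates}.
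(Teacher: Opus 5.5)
Your proposal is correct and follows the paper's proof: both compare the two suprema over the same measure-independent set of shifts to get the Lipschitz bound, then invoke Lemma~\ref{lem:uniform-translates} for the convergence and Proposition~\ref{prop:depth-continuity} for the moving point $y_n$. Your explicit triangle-inequality split simply spells out the joint-continuity step that the paper's final sentence leaves implicit.
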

\begin{pf}
  For fixed $z$, the two suprema defining $M_K(z;\mu_n)$ and
  $M_K(z;\mu)$ are taken over the same set of shifts. The difference
  of the suprema is bounded by the right-hand side of
  \eqref{eq:depth-lipschitz-measure}. Apply
  Lemma~\ref{lem:uniform-translates} and then
  Proposition~\ref{prop:depth-continuity}.
\end{pf}

\section{Effective support and structured distributions}

\subsection{Effective support and \texorpdfstring{$K$}{K}-smoothness}

Define the closed maximiser core and the effective support by
\begin{equation}
  C_K(\mu):=\bigcap_{x\in A_{K,\mu}}(K+x)=K\ominus A_{K,\mu},
  \qquad
  S_K(\mu):=\{y:\de_K(y;\mu)>1-\beta_{K,\mu}\}.
  \label{eq:cores}
\end{equation}
Then $S_K(\mu)$ is convex, being the increasing union of the nested
convex sets $D_\alpha^K(\mu)$. Furthermore, \eqref{eq:erosion-level}
implies that
\begin{displaymath}
  S_K(\mu)=\bigcup_{\alpha>1-\beta_{K,\mu}}D_\alpha^K(\mu) \subset
  C_K(\mu).
\end{displaymath}
  The closed core
$C_K(\mu)$ and the effective support $S_K(\mu)$ in \eqref{eq:cores}
need not coincide. The difference is caused by maximising translates
that are active only in the limit.

\begin{proposition}
  \label{prop:effective-support}
  Let $\beta=\beta_{K,\mu}$ and $A=A_{K,\mu}$. For every probability
  measure we have $S_K(\mu)\subset C_K(\mu)$. If $\mu$ is
  $K$-smooth, then also $S_K(\mu)=\bigcap_{x\in A}\Int K+x$; in
  particular, $S_K(\mu)$ is open and convex. If the maximiser of the
  translation-mass function is unique, say $A=\{x_0\}$, then
  $S_K(\mu)=\Int K+x_0$.  
\end{proposition}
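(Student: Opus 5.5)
The inclusion $S_K(\mu)\subset C_K(\mu)$ follows directly from the definitions. If $y\in S_K(\mu)$, then $M_K(y;\mu)<\beta$, so every translate $K+x$ that misses $y$ has mass strictly less than $\beta$. A translate $K+x$ with $x\in A$ has mass exactly $\beta$, so it must contain $y$. Hence $y\in\bigcap_{x\in A}(K+x)=C_K(\mu)$. This uses neither smoothness nor any other hypothesis; it is the contrapositive of the last assertion of Proposition~\ref{prop:baseline}.

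For the $K$-smooth case, I would work with the auxiliary function $\overline M(y)=\max\{f(x):y\notin\Int K+x\}$ from the proof of Proposition~\ref{prop:depth-continuity}. That proof shows $\overline M(y)=M_K(y;\mu)$ when $\mu$ is $K$-smooth, and that the maximum is attained whenever it is positive.

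The inclusion $S_K(\mu)\subset\bigcap_{x\in A}(\Int K+x)$ goes as follows. Suppose $y\in S_K(\mu)$ but $y\notin\Int K+x_0$ for some $x_0\in A$. Then $x_0$ is feasible for $\overline M(y)$. This gives $M_K(y;\mu)=\overline M(y)\ge f(x_0)=\beta$, so $\de_K(y;\mu)\le1-\beta$, contradicting $y\in S_K(\mu)$.

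For the reverse inclusion, let $y\in\Int K+x$ for every $x\in A$. If $\overline M(y)=0$, then $M_K(y;\mu)=0<\beta$, by the positivity of $\beta$ in Lemma~\ref{lem:f-basic}. Otherwise choose a maximiser $x_1$ with $y\notin\Int K+x_1$. Then $x_1\notin A$ by assumption, so $f(x_1)<\beta$. Hence $M_K(y;\mu)=f(x_1)<\beta$, that is, $\de_K(y;\mu)>1-\beta$ and $y\in S_K(\mu)$. The attainment of the maximum defining $\overline M$ is the key point: it turns the pointwise strict inequality $f<\beta$ off $A$ into a strict inequality for the supremum. This is where $K$-smoothness enters, through Proposition~\ref{prop:depth-continuity}, and it is the step I regard as the main obstacle.

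Openness does not follow from the intersection formula, since an infinite intersection of open sets need not be open. I would derive it instead from $S_K(\mu)=\{y:\de_K(y;\mu)>1-\beta\}$, which is open because $\de_K(\cdot;\mu)$ is continuous by Proposition~\ref{prop:depth-continuity}. Convexity follows since each $\Int K+x$ is convex, or alternatively from the nested-union description preceding the proposition. Finally, when $A=\{x_0\}$, the intersection formula reduces to $S_K(\mu)=\Int K+x_0$. I read this last claim within the same $K$-smooth setting. Without smoothness only the inclusion $S_K(\mu)\subset K+x_0$ from the first step is guaranteed by this argument.
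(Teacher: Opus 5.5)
Your proof is correct and is essentially the paper's argument. The paper works inline with $\sup_{x\notin y+\check K}f(x)$: it uses continuity of $f$ to approximate a maximiser outside $\Int(y+\check K)$ by shifts outside $y+\check K$, and boundedness of near-maximising sequences for the converse, which is exactly what your $\overline M$ from Proposition~\ref{prop:depth-continuity} packages. For openness the paper instead uses the intersection formula together with compactness of $A$: the positive distance from $A$ to the complement of $\Int\check K+y$ persists for nearby $y$. So openness does follow from that formula, although your continuity-of-depth argument is equally valid.

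One correction to your commentary concerns where $K$-smoothness enters. Attainment of $\overline M$ needs only upper semicontinuity of $f$. Hence, via $M_K\le\overline M$, the inclusion $\bigcap_{x\in A}(\Int K+x)\subset S_K(\mu)$ holds for every $\mu$. $K$-smoothness is really needed for $\overline M=M_K$ in the forward inclusion. That inclusion fails in Example~\ref{ex:one-dimens-disc}, where $S_K(\mu)=[5,7)$ but $\Int K+x_0=(5,7)$; this also confirms your reading of the unique-maximiser claim as requiring $K$-smoothness.
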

\begin{pf}
  If $y$ is not in one of the maximising translates, its depth equals
  the baseline; hence $S_K(\mu)\subset C_K(\mu)$.  If $\mu$ is
  $K$-smooth, then $f$ is continuous. By \eqref{eq:depth}, a point $y$
  belongs to $S_K(\mu)$ precisely when
  \begin{equation}
    \sup_{x\notin y+\check K}f(x)<\beta.
    \label{eq:support-sup}
  \end{equation}
  If \eqref{eq:support-sup} holds and $x\in A$, then
  $x\in\Int(y+\check K)$. Otherwise, $x$ can be approached by points
  outside $y+\check K$, and continuity of $f$ would make the supremum
  in \eqref{eq:support-sup} equal to $\beta$.

  Conversely, suppose that $A\subset\Int\check K+y$ and that the
  supremum in \eqref{eq:support-sup} equals $\beta$. There is a
  sequence $x_n\notin y+\check K$ with $f(x_n)\to\beta$. Since $f$
  vanishes at infinity, the sequence $(x_n)$ is bounded. Any
  convergent subsequence has a limit $x$ outside $\Int\check K+y$,
  while continuity gives $f(x)=\beta$.  This contradicts
  $A\subset\Int \check K+y$.

  Finally, the set $\bigcap_{x\in A}\Int K+x$ is open.  Indeed, if
  $A\subset\Int\check K+y$, compactness of $A$ gives a positive
  distance from $A$ to the complement of $\Int\check K+y$, and the
  same inclusion persists for all points close to $y$.
\end{pf}

\begin{example}[An empty effective support]
  Let $K$ be a Euclidean ball of radius $r$, and let $\mu$ assign mass
  $1/2$ uniformly to each of two balls of radius $r$ whose centres are
  at distance greater than $4r$. A translate of $K$ meets at most
  one component and can capture an entire component. Hence
  $\beta=1/2$.  For every $y$, one of the two maximising translates
  misses $y$, so $\de_K(y;\mu)=\frac12$ for all $y\in\R^d$,
  and $S_K(\mu)=\varnothing$.
\end{example}

\begin{example}[One-dimensional discontinuous depth]
  \label{ex:one-dimens-disc}
  Let $K=[-1,1]$. Consider the mixture of a symmetric triangular law
  on $[1,3]$ with weight $0.3$, an atom of mass $0.2$ at $5$, and a
  shifted standard exponential law on $[6,\infty)$ with weight $0.5$;
  see Figure~\ref{fig:univariate}. Its translation-mass function is
  upper semicontinuous but discontinuous, attains its maximum at the
  shift $x_0=6$, and has $\beta=0.2+0.5(1-e^{-1})\approx0.516$.  The
  depth therefore equals the baseline $1-\beta$ outside
  $K+x_0=[5,7]$. At the endpoint $5$, the depth has an upward jump,
  and the effective support is $[5,7)$ rather than the closed core
  $[5,7]$.  This illustrates why the two notions in \eqref{eq:cores}
  must be kept separate.

  \begin{figure}[ht]
    \centering
    \begin{subfigure}{0.48\textwidth}
      \centering\includegraphics[width=\linewidth]{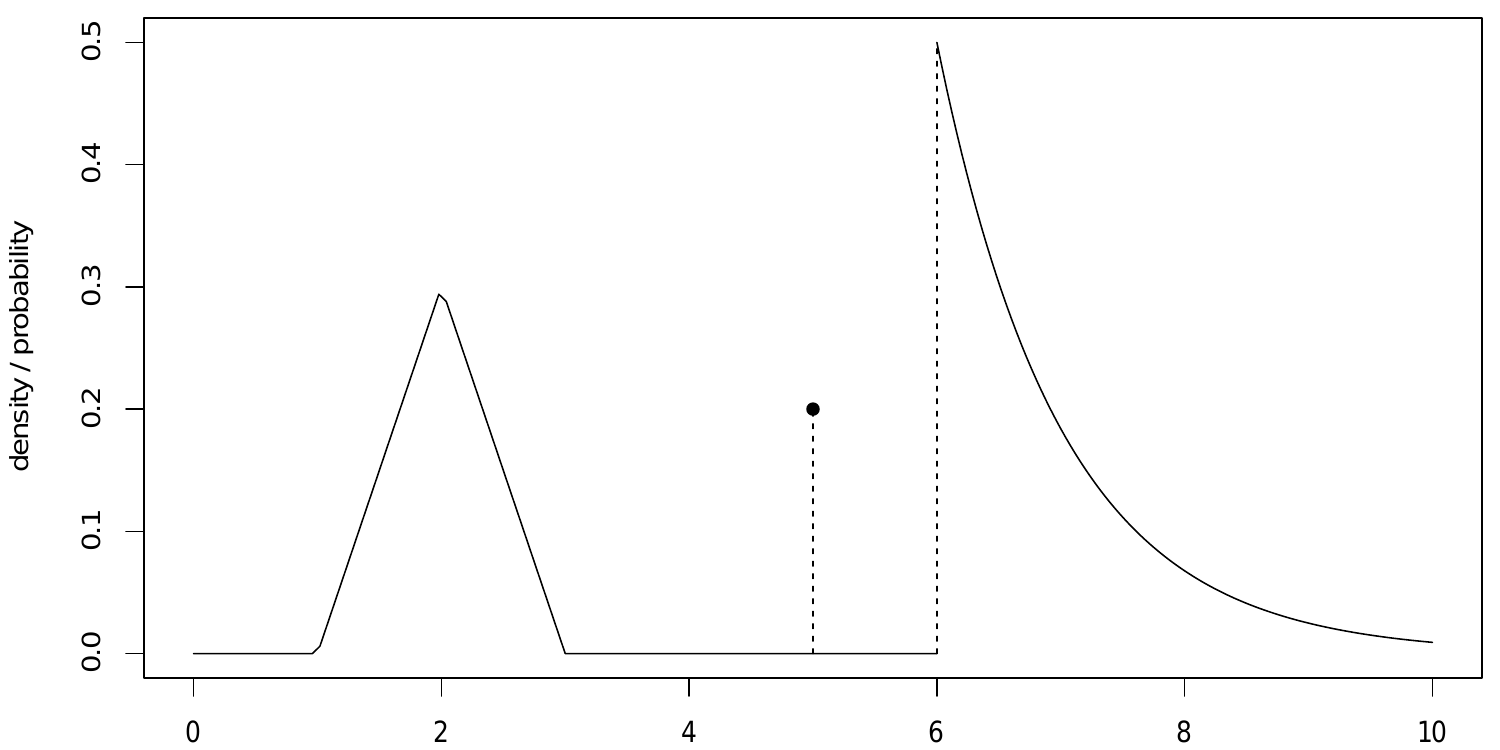}
      \caption{Continuous density and the atom at $5$.}
    \end{subfigure}\hfill
    \begin{subfigure}{0.48\textwidth}
      \centering\includegraphics[width=\linewidth]{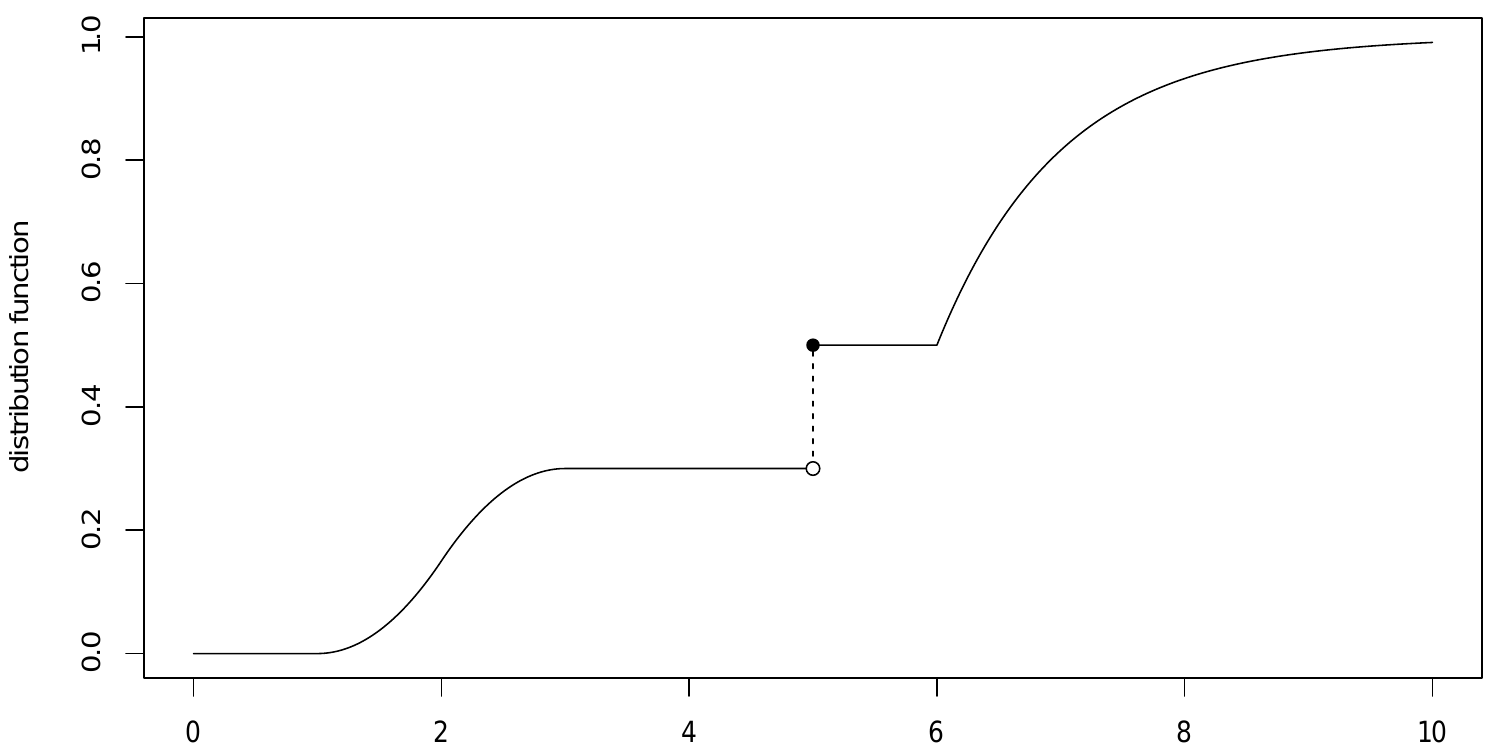}
      \caption{Distribution function.}
    \end{subfigure}

    \medskip
    \begin{subfigure}{0.48\textwidth}
      \centering\includegraphics[width=\linewidth]{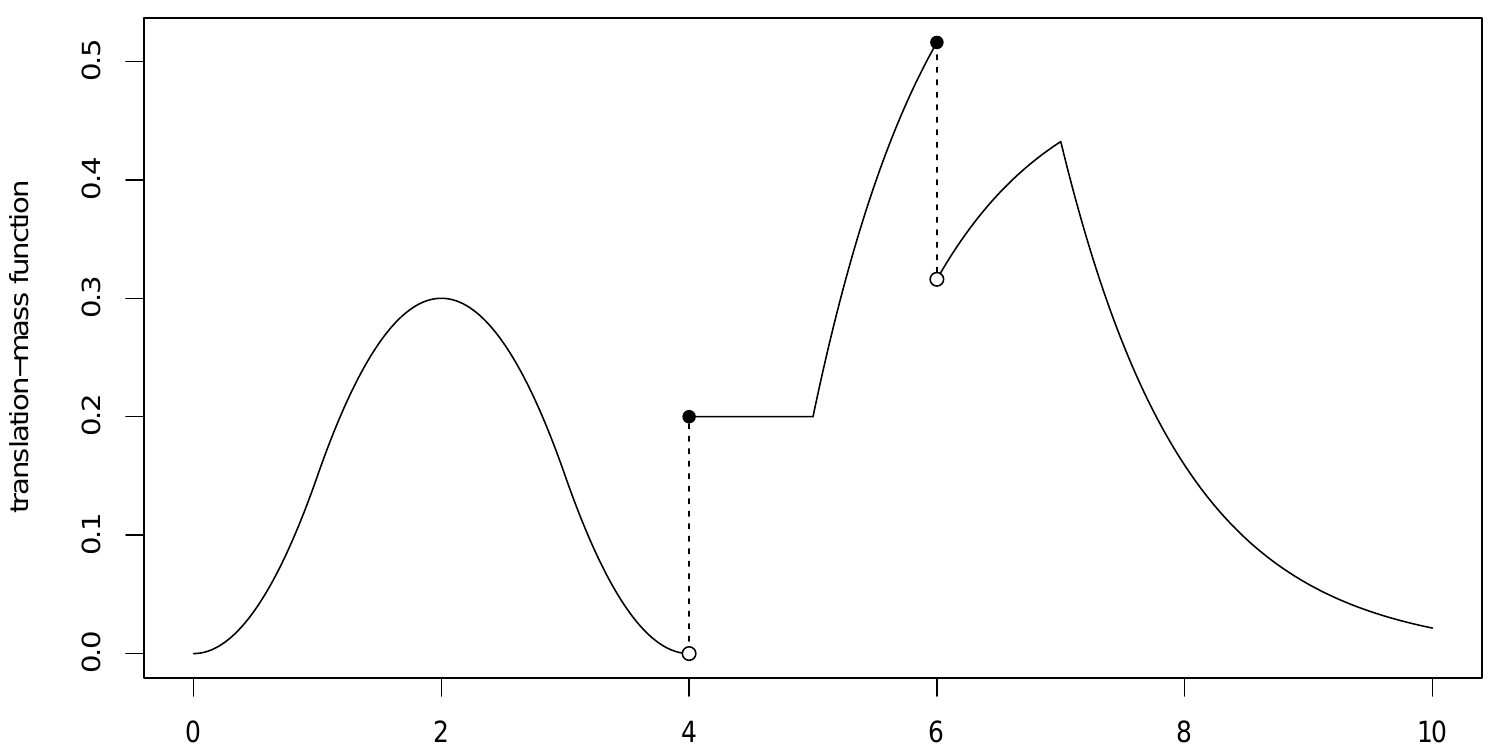}
      \caption{Translation-mass function.}
    \end{subfigure}\hfill
    \begin{subfigure}{0.48\textwidth}
      \centering\includegraphics[width=\linewidth]{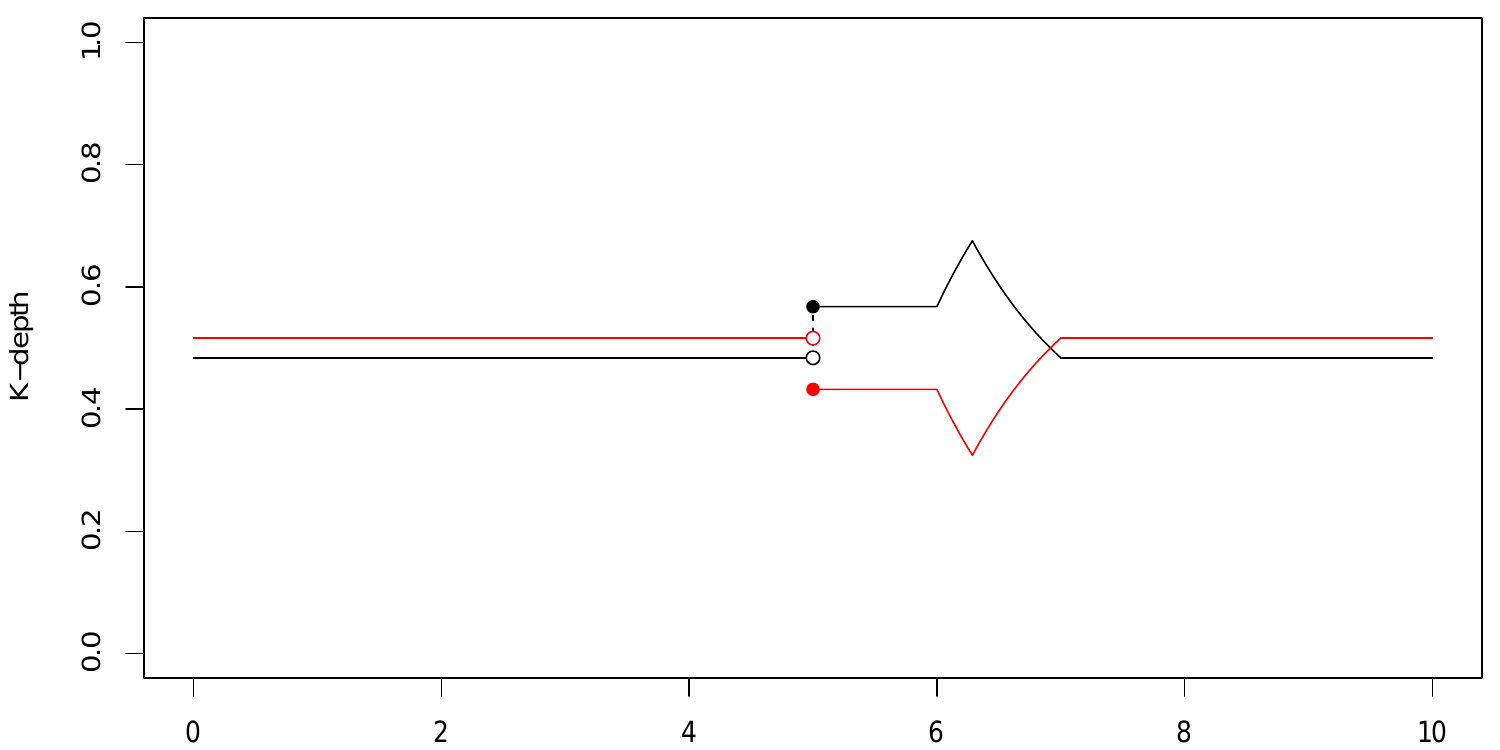}
      \caption{$K$-depth and its complement $1-\de_K$.}
    \end{subfigure}
    \caption{The one-dimensional mixture Example~\ref{ex:one-dimens-disc}. The discontinuity of the
      translation-mass function produces a half-open effective support.}
    \label{fig:univariate}
  \end{figure}
\end{example}

\subsection{Radial monotonicity and \texorpdfstring{$s$}{s}-concavity}

Radial monotonicity considered next is naturally centred at a
maximiser $m$ of the translation-mass function. The coordinate origin
has no intrinsic role and may differ from $m$.

\begin{definition}
  The translation-mass function is \emph{radially nonincreasing about}
  $m\in\R^d$ if, for every $u\in\Sph^{d-1}$, the map
  $r\mapsto f_{K,\mu}(m+ru)$, $r\ge0$, is nonincreasing.
\end{definition}

If $K=-K$ and $\mu$ has a density symmetric about $m$ with convex upper
level sets, Anderson's theorem \cite{Anderson1955} implies radial nonincrease about $m$. Thus $m$ is a maximiser and the high-mass shift
sets are star-shaped about $m$. Anderson's theorem by itself does
not yield strict inequality. Bo\v{c}inec and Nagy
\cite{BocinecNagy2024} give necessary and sufficient conditions for
equality, together with tractable sufficient conditions for strict
inequality at a prescribed nonzero shift. If these strictness conditions
hold for every nonzero shift, then $m$ is the unique maximiser, and
Proposition~\ref{prop:effective-support} gives
$S_K(\mu)=\Int K+m$.

For $a,b\ge0$, $\lambda\in(0,1)$, and $s\in\R\setminus\{0\}$, let
$M_s(a,b;\lambda) :=\bigl((1-\lambda)a^s+\lambda b^s\bigr)^{1/s}$,
with the standard continuous extensions
$M_0(a,b;\lambda)=a^{1-\lambda}b^\lambda$ and
$M_{-\infty}(a,b;\lambda)=\min(a,b)$. A density $q$ on a convex set
$S$ is $s$-concave if
\[
  q((1-\lambda)x+\lambda y) \ge M_s(q(x),q(y);\lambda) \qquad x, y \in S, \lambda \in (0,1).
\]

\begin{proposition}
  Suppose that $\mu$ has an $s$-concave density on $\R^d$ with
  $s\ge-1/d$. Then $f_{K,\mu}$ is $t$-concave, where $t=s/(1+ds)$ if
  $s>-1/d$, and $t=-\infty$ if $s=-1/d$. In particular, $f_{K,\mu}$ is
  quasi-concave and every set $L^>_\beta$ is convex. For $s=0$, the
  translation-mass function is log-concave.
\end{proposition}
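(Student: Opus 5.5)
The approach is to apply the Borell--Brascamp--Lieb inequality to the $s$-concave density, in its measure form: if $\mu$ has an $s$-concave density with $-1/d\le s\le\infty$, then $\mu$ is $t$-concave with $t=s/(1+ds)$ (and $t=-\infty$ when $s=-1/d$). That is, for nonempty Borel sets $A,C$ and $\lambda\in(0,1)$,
\begin{displaymath}
  \mu\bigl((1-\lambda)A+\lambda C\bigr)\ge M_t\bigl(\mu(A),\mu(C);\lambda\bigr).
\end{displaymath}
We take this result of Borell (equivalently Brascamp--Lieb) as known. If needed, it can be derived from the functional Borell--Brascamp--Lieb inequality. One applies that inequality to $h=q\1_{(1-\lambda)A+\lambda C}$, $f=q\1_A$ and $g=q\1_C$, with exponent $s\ge-1/d$. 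The inequality $h((1-\lambda)x+\lambda y)\ge M_s(f(x),g(y);\lambda)$ is exactly the $s$-concavity of $q$ on the convex support. The conclusion then gives the exponent $s/(1+ds)$.

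The key geometric input is that translates of a convex body form a Minkowski-linear family. Fix $x_1,x_2\in\R^d$ and $\lambda\in(0,1)$. Convexity of $K$ gives $(1-\lambda)K+\lambda K=K$, and therefore
\begin{displaymath}
  (1-\lambda)(K+x_1)+\lambda(K+x_2)=K+\bigl((1-\lambda)x_1+\lambda x_2\bigr).
\end{displaymath}
Applying the measure inequality with $A=K+x_1$ and $C=K+x_2$ yields
\begin{displaymath}
  f_{K,\mu}\bigl((1-\lambda)x_1+\lambda x_2\bigr)\ge M_t\bigl(f(x_1),f(x_2);\lambda\bigr),
\end{displaymath}
which is $t$-concavity of $f_{K,\mu}$ on all of $\R^d$. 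Some care is needed when one of the masses is zero and $t<0$; the usual convention for $M_t$ then gives $0$, so the inequality holds trivially. For $s=0$ we get $t=0$, and $M_0$ is the geometric mean, which is log-concavity.

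Quasi-concavity follows because $M_t(a,b;\lambda)\ge\min(a,b)$ for every $t\in[-\infty,\infty]$. Hence $f((1-\lambda)x_1+\lambda x_2)\ge\min(f(x_1),f(x_2))$. If $x_1,x_2\in L^>_\beta$, the right-hand side exceeds $\beta$, so $L^>_\beta$ is convex. The same argument shows that $L^{\ge}_\beta$ is convex.

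The main obstacle is only the correct invocation of the Borell--Brascamp--Lieb inequality, in particular the admissible range $s\ge-1/d$ and the degenerate endpoint $s=-1/d$, where the resulting exponent is $-\infty$. The rest of the argument is the one-line Minkowski identity above.
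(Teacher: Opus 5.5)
Your proposal is correct and follows the paper's proof, which simply invokes the integral (measure) form of the Borell--Brascamp--Lieb inequality; you have made explicit the identity $(1-\lambda)(K+x_1)+\lambda(K+x_2)=K+((1-\lambda)x_1+\lambda x_2)$ that the citation leaves implicit. One small correction: in the zero-mass case the convention $M_t(a,b;\lambda):=0$ when $ab=0$ is needed for $t>0$, not $t<0$ (for $t\le0$ the value $0$ arises automatically, for example via $0^t=\infty$ when $t<0$), because without it $f_{K,\mu}$, which vanishes at infinity, could not be $t$-concave on all of $\R^d$.
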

\begin{pf}
  This is the integral form of the Borell--Brascamp--Lieb inequality;
  see \cite[Chapter~3]{DharmadhikariJoagDev1988}. The endpoint
  $s=-1/d$ is the limiting quasi-concave case, rather than the
  undefined quotient $s/(1+ds)$.
\end{pf}

\begin{example}[Gaussian contours]
  \label{ex:gaussian}
  Let $\mu$ be the centred bivariate Gaussian law with unit marginal
  variances and correlation $-0.6$. Figure~\ref{fig:gaussian} compares
  a triangular testing body with a Euclidean ball of radius $2$. The
  dashed curves are density contours of $\mu$. Log-concavity
  guarantees convex translation-mass superlevel sets, but the depth
  contours retain the geometry of the testing body.

\begin{figure}[ht]
\centering
\begin{subfigure}{0.48\textwidth}
 \centering\includegraphics[width=\linewidth]{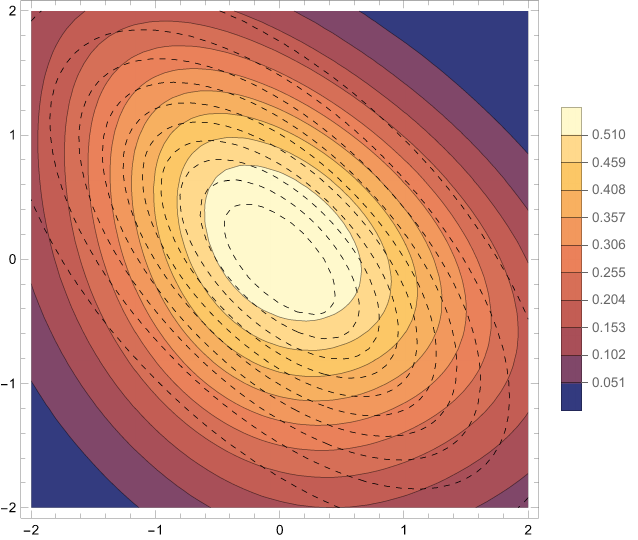}
 \caption{Translation mass, triangular $K$.}
\end{subfigure}\hfill
\begin{subfigure}{0.48\textwidth}
 \centering\includegraphics[width=\linewidth]{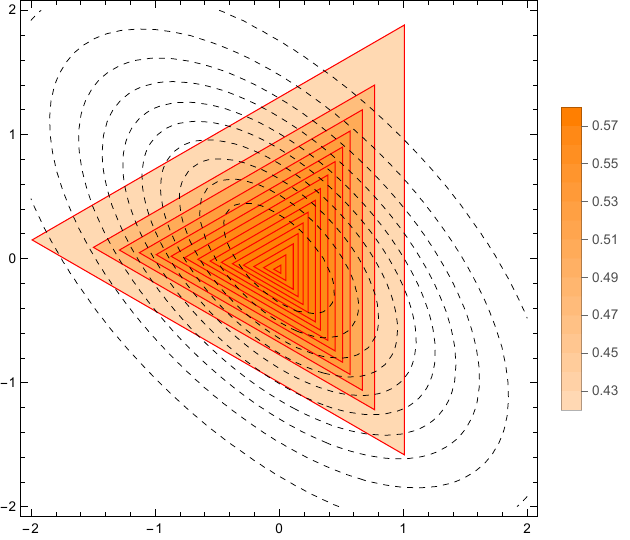}
 \caption{Depth, triangular $K$.}
\end{subfigure}

\medskip
\begin{subfigure}{0.48\textwidth}
 \centering\includegraphics[width=\linewidth]{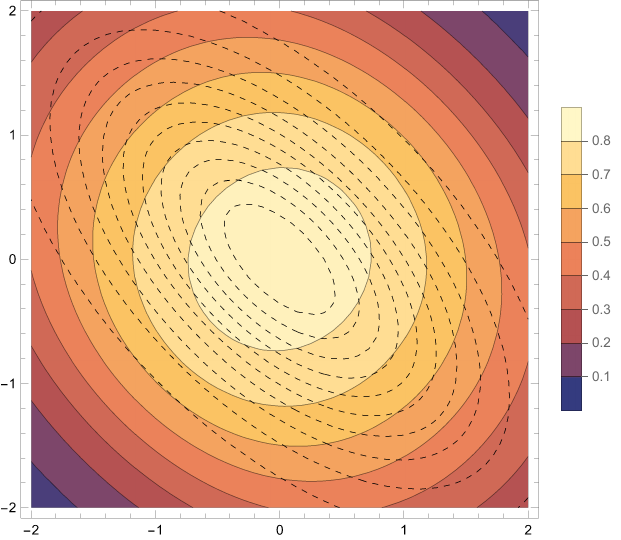}
 \caption{Translation mass, ball $K$.}
\end{subfigure}\hfill
\begin{subfigure}{0.48\textwidth}
 \centering\includegraphics[width=\linewidth]{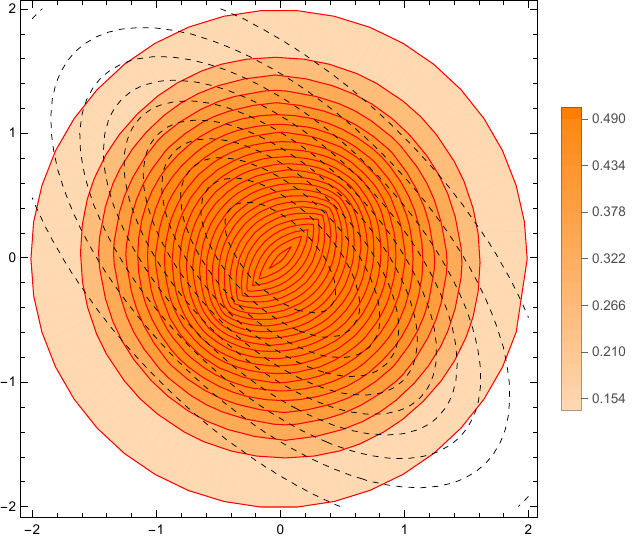}
 \caption{Depth, ball $K$.}
\end{subfigure}
\caption{Example~\ref{sec:notat-texorpdfstr-hu}: Translation-mass functions and $K$-depth contours for the same
Gaussian distribution and two testing bodies.}
\label{fig:gaussian}
\end{figure}
\end{example}

\section{Geometry of the depth regions}

\subsection{Dimension and generating bodies} \label{sec: dimension}

\begin{proposition}
  Let $\alpha\in(1-\beta_{K,\mu},1)$ and put $U=\cl
  L^>_{1-\alpha}$. Then $U$ is nonempty and compact, and
  $\Int D_\alpha^K(\mu) =\{y:\check U+y\subset\Int K\}$. Consequently,
  $D_\alpha^K(\mu)$ is full-dimensional if and only if there is a
  translate of $\check U$ contained in $\Int K$.
\end{proposition}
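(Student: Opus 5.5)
The plan is to reduce everything to the erosion identity of Proposition~\ref{prop:erosion} and then use two elementary facts: a compact set lies in an open set with a positive margin, and a convex set is full-dimensional exactly when it has nonempty interior.

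\emph{Step 1: $U$ is nonempty and compact.} Since $\alpha>1-\beta_{K,\mu}$, we have $1-\alpha<\beta_{K,\mu}$. By Lemma~\ref{lem:f-basic} the value $\beta_{K,\mu}$ is attained, so every $x\in A_{K,\mu}$ lies in $L^{>}_{1-\alpha}$, and hence $U\neq\varnothing$. Since $\alpha<1$, the level $1-\alpha$ is positive. The decay $f(x)\to0$ from Lemma~\ref{lem:f-basic} then puts $L^{>}_{1-\alpha}$ inside a bounded ball, so its closure $U$ is compact.

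\emph{Step 2: $D_\alpha^K(\mu)=K\ominus U$.} This follows from \eqref{eq:erosion-level}. Indeed, $K\ominus L^{>}_{1-\alpha}=K\ominus\cl\conv L^{>}_{1-\alpha}$, and $U$ lies between $L^{>}_{1-\alpha}$ and $\cl\conv L^{>}_{1-\alpha}$. The operation $B\mapsto K\ominus B$ is antitone in $B$, so it is squeezed to the same value. Thus $D_\alpha^K(\mu)=\{y:\check U+y\subset K\}$.

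\emph{Step 3: the interior formula.} Put $W:=\{y:\check U+y\subset\Int K\}$.

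For $W\subset\Int D_\alpha^K(\mu)$: if $y\in W$, the compact set $\check U+y$ lies in the open set $\Int K$, so it has positive distance $\eps$ from $\R^d\setminus\Int K$. Then $\check U+y'\subset\Int K\subset K$ whenever $\|y'-y\|<\eps$. Hence a whole neighbourhood of $y$ lies in $D_\alpha^K(\mu)$, so $y\in\Int D_\alpha^K(\mu)$.

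For the reverse inclusion: if $y+\eps B\subset D_\alpha^K(\mu)$, then $\check U+y+\eps B\subset K$. So every point of $\check U+y$ has a ball of radius $\eps$ around it inside $K$, and therefore $\check U+y\subset\Int K$, i.e.\ $y\in W$.

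This two-sided argument is the only step needing care. The delicate direction is the first one, and the point that makes it work is the compactness of $U$ from Step~1. Note that this requires $\alpha<1$: for $\alpha=1$ the level is $0$ and the shift set is unbounded.

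\emph{Step 4: full-dimensionality.} By Proposition~\ref{prop:erosion}, $D_\alpha^K(\mu)$ is convex. A convex set in $\R^d$ is full-dimensional if and only if its interior is nonempty. By Step~3, this happens if and only if some translate $\check U+y$ lies in $\Int K$.
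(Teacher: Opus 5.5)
Your proof is correct and follows essentially the same route as the paper. Nonemptiness and boundedness of $L^{>}_{1-\alpha}$ come from Lemma~\ref{lem:f-basic}, and the two inclusions come from an $\eps$-ball around $y$ and the positive distance of the compact set $\check U+y$ from $K^c$; your Steps~2 and~4 only make explicit the identity $D_\alpha^K(\mu)=K\ominus U$ and the convexity argument that the paper leaves implicit.

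One small correction to your side remark on $\alpha=1$: $L^{>}_0$ need not be unbounded (e.g.\ for compactly supported $\mu$). Moreover, the interior identity does not actually depend on Step~1, since the closed set $\check U+y$ is automatically compact once it lies in the bounded set $\Int K$.
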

\begin{pf}
  The threshold $1-\alpha$ is positive and below $\beta_{K,\mu}$, so
  $L^>_{1-\alpha}$ is nonempty and bounded by
  Lemma~\ref{lem:f-basic}. Let $D=K\ominus U$. If $y\in\Int D$, then
  $y+rB\subset D$ for some $r>0$, and therefore
  $\check U+y+rB\subset K$. Hence $\check U+y\subset\Int K$.
  Conversely, if the compact set $\check U+y$ is contained in
  $\Int K$, it has positive distance from $K^c$. Thus
  $\check U+y+rB\subset K$ for some $r>0$, and $y+rB\subset D$.
\end{pf}

\begin{corollary}
  If $\mu$ is $K$-smooth and $\de_K(y;\mu)>\alpha$, then
  $y\in\Int D_\alpha^K(\mu)$. In particular, a region that contains a
  point of depth strictly larger than its level is full-dimensional.
\end{corollary}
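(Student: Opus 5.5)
By Proposition~\ref{prop:depth-continuity}, $K$-smoothness makes $\de_K(\cdot;\mu)$ continuous. Write $\gamma:=\de_K(y;\mu)>\alpha$. Continuity gives a radius $r>0$ such that $\de_K(z;\mu)>\alpha$ for all $z\in y+rB$. By \eqref{eq:erosion-level} these points lie in $D_\alpha^K(\mu)$. Hence $y+rB\subset D_\alpha^K(\mu)$, so $y\in\Int D_\alpha^K(\mu)$.

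The second sentence follows at once. A region $D_\alpha^K(\mu)$ containing such a point $y$ has nonempty interior, so it is full-dimensional. This needs no appeal to the erosion characterisation of the interior.

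I could also route the argument through the preceding proposition, which matters for the case $\alpha\ge 1$. If $\alpha\in(1-\beta_{K,\mu},1)$, I pick an intermediate level $\alpha'\in(\alpha,\gamma)$. Then $y\in D_{\alpha'}^K(\mu)=K\ominus L^{>}_{1-\alpha'}$. Continuity of $f$ gives $\cl L^{>}_{1-\alpha}\subset L^{\ge}_{1-\alpha}$, and compactness of this closure yields a strict margin. The direct continuity argument avoids this detour.

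There is essentially no obstacle. The only point to check is that the hypothesis $\de_K(y;\mu)>\alpha$ forces $\alpha<1$, so the statement is never vacuous in a problematic way. It also needs no separate treatment of baseline levels: for $\alpha\le 1-\beta_{K,\mu}$ the region is $\R^d$ by \eqref{eq:trivial-levels}, and the claim is trivial.
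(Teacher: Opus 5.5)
Your main argument is correct and is the paper's proof: continuity of $\de_K(\cdot;\mu)$ from Proposition~\ref{prop:depth-continuity} gives a neighbourhood of $y$ on which the depth exceeds $\alpha$, and this neighbourhood lies in $D_\alpha^K(\mu)=\{\de_K\ge\alpha\}$ by \eqref{eq:erosion-level}. The detour through the preceding proposition is unnecessary, and your remark that it ``matters for the case $\alpha\ge1$'' is confused: that proposition only covers $\alpha<1$, and the hypothesis $\de_K(y;\mu)>\alpha$ already forces $\alpha<1$.
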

\begin{pf}
  Continuity of the depth gives a neighbourhood on which the depth
  remains larger than $\alpha$.
\end{pf}

A convex body $K$ is called \emph{generating} if every nonempty
intersection of translates of $K$ is a Minkowski summand of
$K$. Euclidean balls (and ellipsoids) are generating in every
dimension, and every planar convex body is generating, see
\cite{Polovinkin1996,Schneider2014}. Further examples are given in
\cite{MR1753492,MR2642723}. If $K$ is a generating body, then any
nonempty $D_\alpha^K(\mu)$ is a summand of $K$.

\begin{proposition}
  Suppose that $f_{K,\mu}(x)>1-\alpha$ for some $x$,
  $D_\alpha^K(\mu)\ne\varnothing$, and that $K$ is a Euclidean
  ball. Then every depth region is either $\R^d$, empty, a singleton,
  or full-dimensional.
\end{proposition}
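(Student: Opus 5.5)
We would prove this by writing every nonempty, non-trivial depth region as an intersection of translates of a ball of radius $r$ (so $K=B_r$ after translating, using \eqref{eq:K-translation}), and then using a dichotomy for such intersections. Fix $\alpha$. If $\alpha\le 1-\beta_{K,\mu}$, then $D_\alpha^K(\mu)=\R^d$ by \eqref{eq:trivial-levels}. Otherwise $L:=L^>_{1-\alpha}$ is nonempty and bounded (Lemma~\ref{lem:f-basic}). By Proposition~\ref{prop:erosion}, $D:=D_\alpha^K(\mu)=\bigcap_{x\in L}(B_r+x)=\bigcap_{x\in U}(B_r+x)$ with $U=\cl L$ compact, since the intersection is unchanged by passing to the closure of the index set. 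If $D=\varnothing$ we are done. So assume $D\ne\varnothing$; then it remains to show that $D$ is a singleton or has nonempty interior.

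The key geometric step concerns a nonempty intersection of translates of $B_r$ indexed by a compact set $U$: $D=\{y:\|y-x\|\le r\ \forall x\in U\}$. We have $y\in D$ iff $U\subset B_r+y$, i.e.\ iff $\max_{x\in U}\|y-x\|\le r$. Let $R$ be the circumradius of $U$, that is, the minimum over $y$ of $g(y):=\max_{x\in U}\|y-x\|$. This minimum is attained at the unique circumcentre $c$, and uniqueness follows from strict convexity of $y\mapsto\|y-x\|^2$, hence of $g^2$. Nonemptiness of $D$ gives $R\le r$, and we split into two cases.

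If $R<r$, then $g$ is continuous (indeed $1$-Lipschitz), so $g<r$ on a neighbourhood of $c$, and this neighbourhood lies in $D$. Thus $D$ is full-dimensional; equivalently, the proposition at the start of Section~\ref{sec: dimension} applies, since $\check U+c\subset\Int B_r$ up to translation.

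If $R=r$, then every $y\in D$ satisfies $g(y)\le r=\min g$, so $y$ is a minimiser of $g$, and by uniqueness of the circumcentre $y=c$. Hence $D=\{c\}$ is a singleton.

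The only point needing care is the uniqueness of the minimiser of $g$. It follows because $g^2=\max_x\|\cdot-x\|^2$ is a maximum of uniformly (1-)strongly convex functions and is therefore strongly convex. I expect no real obstacle. The hypothesis $f_{K,\mu}(x)>1-\alpha$ simply guarantees that we are in the nontrivial case where $L\ne\varnothing$ (and, by Proposition~\ref{prop:baseline}, that $D$ is compact), while the hypothesis $D_\alpha^K(\mu)\ne\varnothing$ gives $R\le r$. Lower-dimensional regions other than points are thus excluded precisely by the strict convexity of the Euclidean ball, in contrast with, e.g., boxes, where a segment can arise.
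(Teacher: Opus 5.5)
Your proof is correct, but it takes a different route from the paper. The paper relies on the cited fact that Euclidean balls are generating bodies. A nonempty $D=D_\alpha^K(\mu)$ is then a Minkowski summand, $K=D+M$. Since support sets add, a non-singleton face of $D$ would give a non-singleton face of the ball. A set of dimension between $1$ and $d-1$ is such a face of itself, so only singletons and full-dimensional sets remain.

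You bypass summands entirely. You write $D$ as the sublevel set $\{g\le r\}$ of $g(y)=\max_{x\in U}\|y-x\|$ and read off the dichotomy from the strong convexity of $g^2$. Circumradius $R<r$ gives interior points near the circumcentre, and $R=r$ forces $D=\{c\}$.

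The paper's argument is shorter once the generating property is granted, and it matches its closing remark (generating plus strictly convex). Yours is self-contained, avoids that nontrivial property, and identifies the singleton explicitly as the circumcentre of $U$. Your idea also extends beyond what the paper states. Suppose $K$ is merely strictly convex and $y_0\ne y_1$ both lie in $D$, with midpoint $m$. For every $x\in U$, the point $m-x$ lies in the open segment between the distinct points $y_0-x,y_1-x\in K$, hence in $\Int K$. So $\check U+m\subset\Int K$, and $D$ is full-dimensional. The generating hypothesis in the remark after the proposition is therefore unnecessary.

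One small correction: Lemma~\ref{lem:f-basic} gives boundedness of $L^>_{1-\alpha}$ only when $1-\alpha>0$. For $\alpha=1$ this set may be unbounded, in which case $D=\varnothing$. Compactness of $U$ should instead be derived from $D\ne\varnothing$: any $y\in D$ gives $L^>_{1-\alpha}\subset y+\check K$.
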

\begin{pf}
  By Proposition~\ref{prop:baseline}, $D=D_\alpha^K(\mu)$ is distinct
  from the whole space. Assume that $D$ is nonempty. If $D$ has any
  non-singleton face, then the sum also has such a face, which is
  impossible since $K$ is a ball. This excludes the case of $D$ having
  dimension between $1$ and $d-1$, since such sets always have a
  nontrivial flat face. Thus $D$ is a singleton or is
  full-dimensional.
\end{pf}

This result holds also in all cases when $K$ is generating and
strictly convex.

\subsection{Active translates and normal cones}

For a convex set $C$ and $y\in C$, write
$N_C(y):=\{u:\langle u,z-y\rangle\le0\text{ for all }z\in C\}$ for its
normal cone. A convex body is \emph{smooth} if every boundary point
has a unique supporting hyperplane, equivalently, the normal cone is
one-dimensional for all $y\in\partial C$. For convex body $K$,
\begin{equation}
  \label{eq: support function}
  h_K(u):=\sup\{\langle x,u\rangle: x\in K\},\quad u\in\R^d,
\end{equation}
denotes its \emph{support function}. 

\begin{theorem}
\label{thm:normal-cone}
  Assume that $K$ is smooth, let $\alpha\in(1-\beta_{K,\mu},1)$, and
  suppose that $D=D_\alpha^K(\mu)$ has nonempty interior. Put
  $U:=\cl L^>_{1-\alpha}$ and
  \begin{displaymath}
    A_\alpha(y):=\{x\in U:y\in\partial(K+x)\},
    \qquad y\in\partial D.
  \end{displaymath}
  Then $A_\alpha(y)$ is nonempty and compact, and
  \begin{equation}
    N_D(y)
    =\cl\operatorname{cone}
    \{n_K(y-x):x\in A_\alpha(y)\},
    \label{eq:normal-cone}
  \end{equation}
  where $n_K(p)$ is the unique outer unit normal of $K$ at $p$.
  Consequently, $D$ has a unique outer normal at $y$ if and only if
  $n_K(y-x)=n_K(y-x')$ for all $x,x'\in A_\alpha(y)$.  A sufficient
  condition is that every active set $A_\alpha(y)$ is a singleton.
\end{theorem}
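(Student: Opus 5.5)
The plan is to write $D$ as an intersection of translates and compute its normal cone by the usual intersection formula, made rigorous by compactness. By Proposition~\ref{prop:erosion}, $D=K\ominus U=\bigcap_{x\in U}(K+x)$. Since $\alpha>1-\beta_{K,\mu}$, the set $U$ is nonempty and compact by Lemma~\ref{lem:f-basic}. Then I would prove the following, in order.

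\emph{Step 1: $A_\alpha(y)$ is nonempty and compact.} Compactness holds because $A_\alpha(y)=U\cap(y-\partial K)$ is a closed subset of the compact set $U$. For nonemptiness, note that $y\in\partial D$ gives points $y_n\notin D$ with $y_n\to y$. Each such $y_n$ is missed by some $K+x_n$ with $x_n\in U$. Passing to a subsequence, $x_n\to x\in U$. Then $y\notin\Int(K+x)$, since otherwise $y_n\in K+x_n$ for large $n$ by the uniform interior margin. Also $y\in D\subset K+x$. Hence $y\in\partial(K+x)$.

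\emph{Step 2: the inclusion ``$\supseteq$''.} For $x\in A_\alpha(y)$ we have $D\subset K+x$, so $N_{K+x}(y)\subset N_D(y)$. Smoothness of $K$ gives $N_{K+x}(y)=\operatorname{cone}\{n_K(y-x)\}$. Since $N_D(y)$ is a closed convex cone, it contains the closed conic hull of all these normals.

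\emph{Step 3: the inclusion ``$\subseteq$''.} This is the main obstacle, because $U$ may be infinite. I would argue via support functions and a separation argument. Let $C$ be the right-hand side of \eqref{eq:normal-cone}, and suppose $u\in N_D(y)\setminus C$. Separation yields a vector $v$ with $\langle v,u\rangle>0$ and $\langle v,n\rangle<0$ for all $n\in C\setminus\{0\}$. The set $\{n_K(y-x):x\in A_\alpha(y)\}$ is compact, because $n_K$ is continuous on $\partial K$ for smooth $K$. Hence there is $\delta>0$ with $\langle v,n_K(y-x)\rangle\le-\delta$ for all $x\in A_\alpha(y)$.

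I claim that $y+\eps v\in D$ for small $\eps>0$, which contradicts $\langle u,\eps v\rangle>0$ and $u\in N_D(y)$. Split $U$ into two parts.
\begin{itemize}
\item For $x$ in a small neighbourhood $W$ of $A_\alpha(y)$, continuity of $n_K$ on $\partial K$ shows that $v$ points strictly inward at boundary points near $y-x$. One then checks that $y-x+\eps v\in K$ for small $\eps$, uniformly over $x\in W\cap U$. The point $y-x$ may lie in the interior near the boundary, so this check uses a supporting-hyperplane or directional-derivative argument on the gauge of $K$.
\item For $x\in U\setminus W$, compactness gives a uniform positive distance from $y-x$ to $\partial K$, and $y-x\in K$. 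Hence $y-x+\eps v\in\Int K$ for small $\eps$.
\end{itemize}
In both cases $y+\eps v\in K+x$, so $y+\eps v\in D$.

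The uniform inward-pointing estimate near the active set is the delicate point. I expect to handle it by writing $K=\{z:g(z)\le1\}$ with the gauge $g$ taken about an interior point. Smoothness makes $g$ continuously differentiable away from the origin, with $\nabla g$ parallel to $n_K$. A first-order Taylor bound for $g$, uniform on a compact neighbourhood, then gives $g(y-x+\eps v)\le1$ for $x\in W\cap U$.

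\emph{Step 4: the consequences.} $D$ is full-dimensional, so $N_D(y)$ is pointed and contains no line. Hence $N_D(y)$ is a single ray if and only if all the generating unit normals coincide. If $A_\alpha(y)$ is a singleton, this holds trivially.
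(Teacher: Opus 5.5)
Your proof is correct, but for the nontrivial inclusion $N_D(y)\subseteq\cl\operatorname{cone}\{n_K(y-x):x\in A_\alpha(y)\}$ it takes a different route from the paper.

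\textbf{The paper's route.} The paper writes $D=\{G\le 0\}$ for the compact max-function $G(z)=\max_{(x,u)\in U\times\Sph^{d-1}}\{\langle u,z-x\rangle-h_K(u)\}$. It obtains $\partial G(y)$ as the convex hull of the active unit normals, using the subdifferential formula for maxima over a compact index set. It then invokes $N_D(y)=\operatorname{cone}\partial G(y)$ for a convex sublevel set under Slater's condition. This gives both inclusions at once, and the hypothesis $\Int D\ne\varnothing$ enters transparently as the Slater point. Closedness of the cone is automatic, since $\partial G(y)$ is compact and does not contain $0$.

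\textbf{Your route.} You prove the easy inclusion directly from $D\subset K+x$. You prove the hard one by contradiction, exhibiting a direction $v$ that is feasible for all translates simultaneously. This uses the near/far splitting of $U$, compactness on the far part, and a uniform first-order estimate for the gauge on the near part. The argument is self-contained and shows the geometric mechanism. The price is two regularity facts that you should state explicitly:
\begin{itemize}
\item continuity of the Gauss map $n_K$ on $\partial K$;
\item $C^1$ regularity of the gauge away from its centre. Its subdifferential at $z\ne c$ is a singleton by smoothness of $K$, and a differentiable convex function is continuously differentiable.
\end{itemize}

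\textbf{One step needs justification.} A vector $v$ with $\langle v,u\rangle>0$ and $\langle v,n\rangle<0$ for all $n\in C\setminus\{0\}$ need not exist for an arbitrary closed convex cone $C$. It exists only when $C$ is pointed. Here it does exist, because $C\subset N_D(y)$ by your Step~2 and $\Int D\ne\varnothing$. Take $v_1\in C^\circ$ with $\langle v_1,u\rangle>0$, and set $v=v_1+\tau(z_0-y)$ with $z_0\in\Int D$ and $\tau>0$ small. Then:
\begin{itemize}
\item $\langle z_0-y,n\rangle<0$ for every $n\in N_D(y)\setminus\{0\}$, so $\langle v,n\rangle<0$ on $C\setminus\{0\}$;
\item $\langle v,u\rangle>0$ survives for small $\tau$.
\end{itemize}
This is the only place your Step~3 uses the interior hypothesis, so say so there rather than only in Step~4. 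The hypothesis cannot be dropped. For two discs in the plane touching at a single point $y$, $C$ is a line, while the normal cone of their intersection $\{y\}$ is all of $\R^2$, and no such $v$ exists.
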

\begin{pf}
  The representation $D=\bigcap_{x\in U}(K+x)$ follows by passing from
  the strict level set to its closure. If $A_\alpha(y)$ were empty,
  then $y$ would lie in the interior of every $K+x$. Compactness of
  $U$ and continuity of $x\mapsto\dist(y,(K+x)^c)$ would give a
  uniform positive margin, making $y$ an interior point of $D$. Thus
  the active set is nonempty; it is closed in the compact set $U$.

  To justify the normal-cone formula, use the support-function
  representation of the translates. Define the finite convex function
  \begin{displaymath}
    G(z):=\max_{(x,u)\in U\times\Sph^{d-1}}
    \{\langle u,z-x\rangle-h_K(u)\}.
  \end{displaymath}
  Since $U\times\Sph^{d-1}$ is compact, the maximum is attained, and
  since $\Int D\ne\varnothing$ there is $z_0$ with $G(z_0)<0$. Then
  $D=\{z:G(z)\le0\}$ and $G(y)=0$ for all $y\in\partial D$.  The
  compact maximum-function subdifferential formula
  \cite[Theorem~10.31]{RockafellarWets1998} gives
  $\partial G(y) =\operatorname{conv}\{u: \ \langle
  u,y-x\rangle=h_K(u) \; \text{for some}\; x\in U\}$. By the
  normal-cone formula for a convex sublevel set under strict
  feasibility \cite[Theorem~23.7]{Rockafellar1970},
  $N_D(y)=\operatorname{cone}\partial G(y)$. Equality in the support
  inequality is equivalent to $y\in\partial(K+x)$, and smoothness of
  $K$ makes the corresponding unit vector unique, namely
  $u=n_K(y-x)$. This proves \eqref{eq:normal-cone}. The final
  equivalence follows because a normal cone is a ray precisely when
  all its generating unit vectors agree.
\end{pf}

The following result concerns testing bodies whose boundary is $C^2$. 

\begin{theorem}
  \label{thm:c2-envelope}
  In the setup of Theorem~\ref{thm:normal-cone}, assume in addition
  that $K=\{z:\rho(z)\le0\}$ for a $C^2$ function $\rho:\R^d\to\R$
  such that $\nabla\rho(z)\ne0$ for all $z\in\partial K$. Define
  $G_\alpha(y):=\max_{x\in U}\rho(y-x)$ for $y\in\R^d$. Then
  \begin{equation}
    D_\alpha^K(\mu)=\{y:G_\alpha(y)\le0\}.
    \label{eq:c2-envelope-sublevel}
  \end{equation}
  If $G_\alpha$ is $C^2$ in a neighbourhood of
  $\partial D_\alpha^K(\mu)$ and $\nabla G_\alpha(y)\ne0$ for all
  $y\in\partial D_\alpha^K(\mu)$, then $D_\alpha^K(\mu)$ has $C^2$
  boundary.
\end{theorem}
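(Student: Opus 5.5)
The plan has two parts: first establish the sublevel-set representation \eqref{eq:c2-envelope-sublevel}, then read off boundary regularity from the implicit function theorem.

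For \eqref{eq:c2-envelope-sublevel}, recall from the proof of Theorem~\ref{thm:normal-cone} that $D:=D_\alpha^K(\mu)=\bigcap_{x\in U}(K+x)$ with $U=\cl L^>_{1-\alpha}$. This holds because $K\ominus L^>_{1-\alpha}=K\ominus\cl L^>_{1-\alpha}$: $K$ is closed, so $\check L+y\subset K$ if and only if $\cl(\check L)+y\subset K$, which is Proposition~\ref{prop:erosion}. Since $K=\{\rho\le0\}$, we have $y\in K+x$ if and only if $\rho(y-x)\le0$. Hence
\[
y\in D \iff \rho(y-x)\le 0\ \text{for all } x\in U \iff \sup_{x\in U}\rho(y-x)\le0 .
\]
The supremum is a maximum because $U$ is nonempty and compact (as in the proposition of Section~\ref{sec: dimension}, using $\alpha>1-\beta_{K,\mu}$) and $\rho$ is continuous. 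This gives $D=\{G_\alpha\le0\}$, with $G_\alpha$ finite and continuous.

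For the regularity statement, I would first show that $\partial D=\{G_\alpha=0\}\cap\partial D$ and, more usefully, that near each $y_0\in\partial D$ the set $D$ coincides with $\{G_\alpha\le 0\}$ while $\partial D$ coincides locally with $\{G_\alpha=0\}$. Continuity of $G_\alpha$ gives $G_\alpha(y_0)\le0$, and $G_\alpha(y_0)<0$ would make $y_0$ interior, so $G_\alpha(y_0)=0$. By hypothesis $G_\alpha$ is $C^2$ near $y_0$ with $\nabla G_\alpha(y_0)\ne0$. The implicit function theorem therefore yields a neighbourhood $W$ of $y_0$ in which $\{G_\alpha=0\}\cap W$ is a $C^2$ hypersurface, namely a graph over the hyperplane orthogonal to $\nabla G_\alpha(y_0)$. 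It also shows that $\{G_\alpha\le0\}\cap W$ is exactly one side of this graph, because $G_\alpha$ is strictly monotone along the direction $\nabla G_\alpha(y_0)$ in a small box. Consequently $\partial D\cap W=\{G_\alpha=0\}\cap W$. Covering the compact set $\partial D$ (compact by Proposition~\ref{prop:baseline}) with such neighbourhoods shows that $D$ has $C^2$ boundary.

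The main obstacle is the local identification $\partial D\cap W=\{G_\alpha=0\}\cap W$, in particular ruling out zeros of $G_\alpha$ that are not boundary points. With a nonvanishing gradient this is automatic: every zero of a $C^1$ function with nonzero gradient has points with $G_\alpha<0$ and points with $G_\alpha>0$ arbitrarily close, so it lies on $\partial\{G_\alpha\le0\}$. I would therefore state this step explicitly. The hypothesis $\nabla\rho\ne0$ on $\partial K$ serves only to guarantee $K=\cl\{\rho<0\}$ and is not needed beyond the representation.
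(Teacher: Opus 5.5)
Your proposal is correct and follows the paper's proof step by step: you pass from $L^>_{1-\alpha}$ to its closure $U$, characterise membership via $y\in K+x\iff\rho(y-x)\le0$ with the maximum attained on the compact set $U$, show that $G_\alpha(y_0)=0$ at every boundary point, and conclude with the implicit function theorem. Your explicit one-sidedness argument (monotonicity of $G_\alpha$ along $\nabla G_\alpha(y_0)$) fills in the paper's terse final sentence. The finite covering of $\partial D$ is unnecessary, since having $C^2$ boundary is a local property. Finally, the hypothesis $\nabla\rho\ne0$ is not used anywhere, not even for the representation \eqref{eq:c2-envelope-sublevel}.
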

\begin{pf}
  By Proposition~\ref{prop:erosion},
  $D_\alpha^K(\mu)=\bigcap_{x\in L^>_{1-\alpha}}(K+x)$. Because $K$ is
  closed, passing from the indexing set to its closure leaves the
  intersection unchanged. Hence
  $D_\alpha^K(\mu)=\bigcap_{x\in U}(K+x)$. The set $U$ is compact, so
  the maximum defining $G_\alpha$ is attained and $G_\alpha$ is
  continuous. A point $y\in\R^d$ belongs to $D_\alpha^K(\mu)$ if and
  only if it belongs to $K+x$ for every $x\in U$. Since $\rho$ is a
  defining function for $K$, this condition holds if and only if
  $\rho(y-x)\le0$ for every $x\in U$. Because the maximum over $U$ is
  attained, the latter condition is equivalent to
  $G_\alpha(y)\le0$. This proves \eqref{eq:c2-envelope-sublevel}.

  Let $y_0\in\partial D_\alpha^K(\mu)$. Since the depth region is
  closed, $y_0\in D_\alpha^K(\mu)$, so that
  $G_\alpha(y_0)\le0$. If $G_\alpha(y_0)<0$, continuity of $G_\alpha$
  would give $G_\alpha<0$ throughout a neighbourhood of $y_0$. By
  \eqref{eq:c2-envelope-sublevel}, this neighbourhood would be
  contained in $D_\alpha^K(\mu)$, contradicting
  $y_0\in\partial D_\alpha^K(\mu)$. Thus $G_\alpha(y_0)=0$.

  By assumption, $G_\alpha$ is $C^2$ near $y_0$ and
  $\nabla G_\alpha(y_0)\ne0$. The implicit function theorem shows that
  the level set $\{G_\alpha=0\}$ is a $C^2$ surface in a neighbourhood
  of $y_0$. In view of $D_\alpha^K(\mu)=\{G_\alpha\le0\}$ and the
  nonvanishing gradient, this level set is locally exactly the
  boundary of $D_\alpha^K(\mu)$. Since $y_0$ was arbitrary, the whole
  boundary is $C^2$.
\end{pf}

\subsection{Changing the testing body}

There is no general set-inclusion monotonicity in $K$: enlarging $K$
enlarges each translate but also enlarges the set of shifts whose mass
exceeds the threshold. The following elementary bound is sometimes
useful.

\begin{proposition}
  Let $K_2=K_1+M$, where $M\in\Kc^d$ and $0\in M$. For
  $i=1,2$, let $L_i:=\{x:\mu(K_i+x)>1-\alpha\}$.
  Then $L_1\subset L_2$ and $D_\alpha^{K_2}(\mu)
    \subset\bigcap_{x\in L_1}(K_1+M+x)
    =(K_1+M)\ominus L_1$.
\end{proposition}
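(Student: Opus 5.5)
Since $0\in M$, we have $K_1=K_1+\{0\}\subset K_1+M=K_2$. Hence $K_1+x\subset K_2+x$ for every shift $x$, and monotonicity of $\mu$ gives $\mu(K_2+x)\ge\mu(K_1+x)$. Consequently every $x$ with $\mu(K_1+x)>1-\alpha$ also satisfies $\mu(K_2+x)>1-\alpha$, so $L_1\subset L_2$.

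For the region inclusion, we use the erosion representation of Proposition~\ref{prop:erosion} applied to $K_2$. It gives
\begin{displaymath}
  D_\alpha^{K_2}(\mu)=\bigcap_{x\in L_2}(K_2+x).
\end{displaymath}
An intersection over a larger index set is smaller. Since $L_1\subset L_2$, it follows that
\begin{displaymath}
  D_\alpha^{K_2}(\mu)\subset\bigcap_{x\in L_1}(K_2+x)=\bigcap_{x\in L_1}(K_1+M+x).
\end{displaymath}
The final identity $\bigcap_{x\in L_1}(K_1+M+x)=(K_1+M)\ominus L_1$ is exactly the definition \eqref{eq:minkowski-difference} with $B=L_1$. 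If $L_1=\varnothing$, the empty-intersection convention makes both sides equal to $\R^d$, so the statement holds trivially in that case.

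No step is a genuine obstacle. The only point to check is that the hypothesis $0\in M$ is what makes $K_1\subset K_2$ hold pointwise for each translate; without it, $L_1\subset L_2$ could fail. Everything else is monotonicity of $\mu$ and of intersections.
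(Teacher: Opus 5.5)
Your proof is correct and follows the paper's argument: $0\in M$ gives $K_1\subset K_2$, so $\mu(K_1+x)\le\mu(K_2+x)$ and $L_1\subset L_2$, and intersecting over the smaller index set $L_1$ gives a larger set, which yields the inclusion. Your extra check of the case $L_1=\varnothing$ via the empty-intersection convention is a harmless addition.
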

\begin{pf}
  The assumption $0\in M$ gives $K_1\subset K_2$, hence
  $\mu(K_1+x)\le\mu(K_2+x)$ and $L_1\subset L_2$. Intersecting over
  the larger index set gives the claim.
\end{pf}

\begin{theorem}
  \label{thm:K-stability}
  Let $K_n,K\in\Kc^d_\circ$ and suppose that $d_H(K_n,K)\to 0$. Assume
  that $\mu$ is $K$-smooth. Then
  \begin{equation}
    \sup_{y\in\R^d}
    \left| \de_{K_n}(y;\mu)-\de_K(y;\mu)\right| \to 0 .
    \label{eq:K-stability-depth}
  \end{equation}
  If, in addition, $\alpha\in(0,1)$ is a regular level for $K$ in the
  sense that $D_\alpha^K(\mu)$ is nonempty and compact and
  \begin{displaymath}
    \lim_{\eta\downarrow0}
    d_H(D_{\alpha+\eta}^K(\mu),D_\alpha^K(\mu))=0,
    \qquad
    \lim_{\eta\downarrow0}
    d_H(D_{\alpha-\eta}^K(\mu),D_\alpha^K(\mu))=0,
  \end{displaymath}
  then $d_H(D_\alpha^{K_n}(\mu),D_\alpha^K(\mu))\to 0$. 
\end{theorem}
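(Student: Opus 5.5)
Since $\de_K(y;\mu)=1-M_K(y;\mu)$ with $M_K(y;\mu)=\sup_{y\notin K+x}\mu(K+x)$, the plan is to compare the two suprema after matching the translates of $K_n$ with those of $K$. Put $\eps_n:=d_H(K_n,K)\to0$, so that $K_n\subset K+\eps_nB$ and $K\subset K_n+\eps_nB$. The proof of Lemma~\ref{lem:uniform-translates} already shows
\[
b_K(r)=\sup_{x\in\R^d}\mu\bigl((\partial K+x)^r\bigr)\to0\qquad(r\downarrow0),
\]
using only $K$-smoothness and tightness. We will use this together with the elementary inclusions
\[
(K+x)^{-r}\subset K_n+x\subset(K+x)^{r},\qquad r=\eps_n .
\]
The first inclusion requires a short separate argument: if $z+rB\subset K$ but $z\notin K_n$, separate $z$ from $K_n$ and use $K\subset K_n+rB$ in the separating direction. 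Both layers $(K+x)^{r}\setminus(K+x)$ and $(K+x)\setminus(K+x)^{-r}$ lie in $(\partial K+x)^{r}$. Hence
\[
\sup_x\bigl|\mu(K_n+x)-\mu(K+x)\bigr|\le b_K(\eps_n)=:\delta_n\to0 .
\]

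The difficulty is that the constraint sets differ: $y\notin K_n+x$ versus $y\notin K+x$. We handle this with the closed version $\overline M$ from the proof of Proposition~\ref{prop:depth-continuity}, together with a small shift of the translate.

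Lower bound on $M_{K_n}$: take $x$ with $y\notin K+x$ and $\mu(K+x)$ close to $M_K(y)$. Choose $x'$ with $\|x'-x\|\le2\eps_n$ such that $K_n+x'$ misses $y$; this is possible by pushing along a normal separating $y$ from $K+x$, since $K_n+x\subset K+x+\eps_nB$. Then
\[
\mu(K_n+x')\ge\mu(K+x')-\delta_n,
\]
and $\sup_x|\mu(K+x+v)-\mu(K+x)|\to0$ as $\|v\|\to0$. This last uniform continuity follows because the symmetric difference of $K+x$ and $K+x+v$ lies in $(\partial K+x)^{\|v\|}$, so it is bounded by $b_K(\|v\|)$.

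Upper bound: take $x$ with $y\notin K_n+x$. Then $y\notin (K+x)^{-\eps_n}$, so a shift of size at most $\eps_n$ yields $x'$ with $y\notin K+x'$; here we use that $K$ is a convex body, so the inner parallel body is the erosion. We get
\[
\mu(K_n+x)\le\mu(K+x')+\delta_n+b_K(\eps_n).
\]
All these bounds are uniform in $y$, which gives \eqref{eq:K-stability-depth}. I expect this shifting step, making the inclusion geometry uniform in both $x$ and $y$, to be the main technical point.

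For the regions, let $\tau_n$ be the sup in \eqref{eq:K-stability-depth}. By Proposition~\ref{prop:erosion},
\[
D_{\alpha+\tau_n}^K(\mu)\subset D_\alpha^{K_n}(\mu)\subset D_{\alpha-\tau_n}^K(\mu).
\]
Regularity makes both outer sets Hausdorff-close to $D_\alpha^K(\mu)$. We need $D_{\alpha+\eta}^K$ nonempty for small $\eta$, which follows from its convergence to the nonempty set. We also need $D_{\alpha-\eta}^K$ compact for small $\eta$, which follows from the finite Hausdorff distance to a compact set. Sandwiching gives $d_H(D_\alpha^{K_n},D_\alpha^K)\le\max\{d_H(D_{\alpha\pm\tau_n}^K,D_\alpha^K)\}\to0$. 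This uses that $A\subset C\subset E$ with $A,E$ both within $\eta$ of $D$ forces $C$ to be within $\eta$ of $D$, which holds for nonempty sets.
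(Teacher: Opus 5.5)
Your proposal is correct and takes essentially the paper's route. Like the paper, you compare the suprema defining $M_{K_n}(y;\mu)$ and $M_K(y;\mu)$ by moving each admissible translate a distance $O(d_H(K_n,K))$ so that it stays admissible for the other body. You control the resulting mass change uniformly through the boundary band $b_K$ of $K$ alone, and then sandwich $D_\alpha^{K_n}(\mu)$ between $D_{\alpha\pm\tau_n}^K(\mu)$.

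Your packaging is slightly cleaner than the paper's. You split off the two uniform estimates $|\mu(K_n+x)-\mu(K+x)|\le b_K(\eps_n)$ and $|\mu(K+x+v)-\mu(K+x)|\le b_K(\|v\|)$. You also get admissibility directly, from the separating normal in one direction and from the definition of the erosion in the other. This avoids the paper's auxiliary slack $\gamma$ and its case split on $q=y-x$. The reference to $\overline M$ is never actually used and can be dropped.
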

\begin{pf}
  For $r>0$, put $b_K(r)=\sup_{x\in\R^d}\mu((\partial K+x)^r)$, where
  $(\partial K+x)^r$ is the closed $r$-parallel enlargement of
  $\partial K+x$. By the uniform boundary-band argument in the proof
  of Lemma~\ref{lem:uniform-translates}, $b_K(r)\to0$ as
  $r\downarrow0$.  We shall use the following elementary consequence
  of Hausdorff convergence for convex bodies. If $C$ and $L$ are
  convex bodies and $d_H(C,L)\le r$, then $C^{-r}\subset L$ and
  $L^{-r}\subset C$.
  Thus points of $C\setminus L$ lie within distance $r$ of
  $\partial C$, and points of $L\setminus C$ lie within distance $r$
  of $\partial L$.

  For the function $M_K(y;\mu)$ defined in \eqref{eq:depth}, we prove
  the deterministic estimate
  \begin{equation}
    \sup_y|M_{K_n}(y;\mu)-M_K(y;\mu)|
    \le b_K(2r_n+\gamma),
    \qquad r_n=d_H(K_n,K),
    \label{eq:testing-body-M-bound}
  \end{equation}
  for every fixed $\gamma>0$ and all sufficiently large $n$. This implies
  \eqref{eq:K-stability-depth}, since $\de_K=1-M_K$.

  Fix a convex body $L$ with $d_H(L,K)=r$, and fix $y\in\R^d$. We
  first compare $M_L(y;\mu)$ with $M_K(y;\mu)$. Let $x$ satisfy
  $y\notin L+x$, and write $q=y-x$. If $q\notin K$, then $x$ is also
  admissible for $M_K(y;\mu)$. If $q\in K\setminus L$, then the
  convexity of $K$ and the Hausdorff bound imply that $q$ lies within
  distance $r$ of $\partial K$. Hence we may choose $q'$ outside $K$
  with $\|q-q'\|\le r+\gamma$ and put $x'=y-q'$. Then $y\notin K+x'$
  and $\|x'-x\|\le r+\gamma$. Since $L\subset K+rB$, we have
  $L+x\subset (K+x')+(2r+\gamma)B$. Therefore
  $\mu(L+x) \le \mu(K+x')+b_K(2r+\gamma) \le
  M_K(y;\mu)+b_K(2r+\gamma)$. Taking the supremum over all admissible
  $x$ gives $M_L(y;\mu)\le M_K(y;\mu)+b_K(2r+\gamma)$.
    
  The reverse inequality is analogous but uses the same boundary band
  of $K$. Let $x$ satisfy $y\notin K+x$ and put $q=y-x$. If
  $q\notin L$, then $x$ is admissible for $M_L(y;\mu)$. If
  $q\in L\setminus K$, then $q$ is within distance $r$ of
  $\partial L$, so we can find $q'$ outside $L$ with
  $\|q-q'\|\le r+\gamma$ and put $x'=y-q'$. Then $y\notin L+x'$ and
  $\|x'-x\|\le r+\gamma$. Since $d_H(K,L+x'-x)\le2r+\gamma$, every
  point of $K\setminus(L+x'-x)$ lies within distance $2r+\gamma$ of
  $\partial K$.  Consequently,
  \begin{displaymath}
    \mu(K+x)
    \le \mu(L+x')+b_K(2r+\gamma)
    \le M_L(y;\mu)+b_K(2r+\gamma).
  \end{displaymath}
  Taking suprema gives the opposite bound. Applying this with $L=K_n$
  proves \eqref{eq:testing-body-M-bound} and it suffices to let
  $\gamma\downarrow0$.  Now we pass from depths to regions. Let
  $\Delta_n=\sup_y|\de_{K_n}(y;\mu)-\de_K(y;\mu)|$. For every
  $\eta>\Delta_n$, we have
  $D_{\alpha+\eta}^K(\mu)\subset D_\alpha^{K_n}(\mu) \subset
  D_{\alpha-\eta}^K(\mu)$. Choose $\eta_n\downarrow0$ such that
  $\Delta_n<\eta_n$ eventually. The regular-level assumption and the
  displayed sandwich imply the Hausdorff convergence of
  $D_\alpha^{K_n}(\mu)$ to $D_\alpha^K(\mu)$.
\end{pf}

\section{Large testing bodies and tangent-cone depth}

\subsection{Dilation limit}
\label{sec:dilation-limit}

For $p\in\partial K$, define the tangent cone of $K$ at $p$ by
\begin{displaymath}
  T_K(p):=\cl\bigcup_{s>0}s(K-p).
\end{displaymath}
For $y\in\R^d$, put
\begin{align*}
  M_K^\infty(y;\mu)
  &:=\sup\{\mu(z+T_K(p)):
    p\in\partial K,\ z\in\R^d,\ y\notin z+T_K(p)\},
  \\
  \de_K^\infty(y;\mu)
  &:=1-M_K^\infty(y;\mu).
\end{align*}
We call $\de_K^\infty$ the \emph{tangent-cone depth generated by $K$}.
Since every tangent cone is contained in one of its supporting
halfspaces, we get $\HD(y;\mu)\le\de_K^\infty(y;\mu)$.

\begin{proposition}
  \label{prop:dilation-monotonicity}
  If $0<s\le t$, then $\de_{tK}(y;\mu)\le\de_{sK}(y;\mu)$ for all $y\in\R^d$.
\end{proposition}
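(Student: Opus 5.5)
The plan is to compare the quantities $M_{sK}(y;\mu)$ and $M_{tK}(y;\mu)$ from \eqref{eq:depth} directly. Since $\de=1-M$, it suffices to show $M_{sK}(y;\mu)\le M_{tK}(y;\mu)$. For this I will show that every translate of $sK$ missing $y$ is contained in some translate of $tK$ that still misses $y$. Monotonicity of $\mu$ then gives the inequality between masses, and taking the supremum over admissible translates of $sK$ finishes the proof.

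Fix $x$ with $y\notin sK+x$. Since $sK+x$ is compact and convex, strict separation gives a unit vector $u$ with $h_{sK+x}(u)<\langle u,y\rangle$, that is, $s\,h_K(u)+\langle u,x\rangle<\langle u,y\rangle$. Choose $p\in K$ attaining the support function, $\langle u,p\rangle=h_K(u)$. The candidate translate is
\begin{displaymath}
  tK-(t-s)p+x .
\end{displaymath}

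Two facts need to be checked. First, the inclusion $sK\subset tK-(t-s)p$. For $k\in K$ we have $sk+(t-s)p=t\bigl(\tfrac{s}{t}k+\tfrac{t-s}{t}p\bigr)\in tK$ by convexity of $K$, using $0<s\le t$. Second, the support value
\begin{displaymath}
  h_{tK-(t-s)p}(u)=t\,h_K(u)-(t-s)\langle u,p\rangle=s\,h_K(u).
\end{displaymath}
Hence $tK-(t-s)p+x$ lies in the closed halfspace $\{z:\langle u,z\rangle\le s\,h_K(u)+\langle u,x\rangle\}$, which does not contain $y$.

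Consequently the shift $x'=x-(t-s)p$ is admissible for $M_{tK}(y;\mu)$, and $\mu(tK+x')\ge\mu(sK+x)$. Taking the supremum over all admissible $x$ yields $M_{sK}(y;\mu)\le M_{tK}(y;\mu)$, which is the claim. The only point requiring care is choosing the anchor point $p$ on the supporting face in the separating direction. Dilating about an arbitrary point, such as the origin, could swallow $y$; dilating about $p$ keeps the supporting hyperplane fixed. No assumption on $\mu$ (such as $K$-smoothness) is needed.
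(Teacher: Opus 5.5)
Your proof is correct and is essentially the paper's argument. The paper also separates $sK+x$ from $y$ by a direction $u$, picks $p\in K$ attaining $h_K(u)$, and uses the translate $z+t(K-p)$ with $z=x+sp$, which is exactly your $tK-(t-s)p+x$; you simply verify the inclusion and the support value by explicit computation, where the paper instead uses $0\in K-p$ and $K-p\subset\{v:\langle v,u\rangle\le0\}$.
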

\begin{pf}
  Let $sK+x$ be a translate missing $y$. Strict separation gives a
  unit vector $u$ such that $h_{sK+x}(u)<\langle y,u\rangle$ (here,
  $h_K$ is the support function of $K$ from~\eqref{eq: support
    function}).  Choose $p\in K$ with $\langle p,u\rangle=h_K(u)$ and
  set $z=x+sp$. Then $sK+x=z+s(K-p)\subset z+t(K-p)$, because $K-p$ is
  convex and contains the origin. The larger set is a translate of
  $tK$. Moreover, $K-p\subset\{v:\langle v,u\rangle\le0\}$, so every
  point of $z+t(K-p)$ has scalar product with $u$ at most
  $\langle z,u\rangle=h_{sK+x}(u)<\langle y,u\rangle$. Thus the larger
  translate also misses $y$ and has mass at least $\mu(sK+x)$. Taking
  suprema over the admissible translates proves our claim.
\end{pf}


\begin{theorem}
  \label{thm:dilation}
  Assume that
  \begin{equation}
    \mu(\partial(z+T_K(p)))=0
    \quad\text{for every }p\in\partial K\text{ and }z\in\R^d.
    \label{eq:cone-boundary-null}
  \end{equation}
  Then $\de_{tK}(y;\mu)\to \de_K^\infty(y;\mu)$ as $t\to\infty$ for
  every $y\in\R^d$.
\end{theorem}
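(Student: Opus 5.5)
By Proposition~\ref{prop:dilation-monotonicity}, $t\mapsto M_{tK}(y;\mu)$ is nondecreasing. Hence the limit $M_\infty(y):=\lim_{t\to\infty}M_{tK}(y;\mu)$ exists, and it suffices to prove the two inequalities $M_\infty(y)\le M_K^\infty(y;\mu)$ and $M_\infty(y)\ge M_K^\infty(y;\mu)$.

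\emph{Upper bound.} We show that every translate $tK+x$ missing $y$ is contained in a translated tangent cone that still misses $y$. This follows the proof of Proposition~\ref{prop:dilation-monotonicity}. Strict separation gives a unit vector $u$ with $h_{tK+x}(u)<\langle y,u\rangle$. Choose $p\in\partial K$ with $\langle p,u\rangle=h_K(u)$ and put $z=x+tp$. Then
\[
tK+x=z+t(K-p)\subset z+T_K(p)\subset\{v:\langle v-z,u\rangle\le0\}.
\]
The last set misses $y$, so $z+T_K(p)$ misses $y$ as well. Therefore $\mu(tK+x)\le M_K^\infty(y;\mu)$, which gives $M_{tK}(y;\mu)\le M_K^\infty(y;\mu)$ for every $t$. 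No assumption on $\mu$ is needed for this direction.

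\emph{Lower bound.} Fix $p\in\partial K$ and $z$ with $y\notin z+T_K(p)$, and put $C=z+T_K(p)$. The cone $C$ is closed, so $y$ has positive distance $\delta$ from $C$. Consider the sets $C_t:=z+t(K-p)$. Since $K-p$ is convex and contains $0$, they increase with $t$ and $C_t\subset C$. Their union is dense in $C$, because $C$ is the closure of $\bigcup_s s(K-p)$. In particular, each $C_t$ is a translate of $tK$ missing $y$.

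It remains to show $\mu(C_t)\uparrow\mu(C)$. Continuity from below gives $\mu(C_t)\uparrow\mu\bigl(\bigcup_t C_t\bigr)$. The union $\bigcup_t C_t=z+\bigcup_s s(K-p)$ is a convex set containing the interior of $C$. Indeed, a convex set and its closure have the same interior, and $C$ is full-dimensional because $K$ has nonempty interior. Hence the set $C\setminus\bigcup_t C_t$ lies in $\partial C$. Assumption \eqref{eq:cone-boundary-null} gives $\mu(\partial C)=0$, so $\mu(C_t)\to\mu(C)$. Consequently $M_\infty(y)\ge\mu(z+T_K(p))$. Taking the supremum over admissible $(p,z)$ yields $M_\infty(y)\ge M_K^\infty(y;\mu)$, and together with the upper bound this proves $\de_{tK}(y;\mu)\to\de_K^\infty(y;\mu)$.

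The main point to check carefully is this lower bound: specifically, that the union of the dilates fills the cone up to a boundary set. This is where \eqref{eq:cone-boundary-null} enters, and it is needed for nonsmooth points $p$ where $T_K(p)$ is not a halfspace. The argument also uses the elementary fact that $\Int\cl E=\Int E$ for convex $E$ with nonempty interior. The upper bound is essentially a reuse of the monotonicity argument.
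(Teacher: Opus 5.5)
Your proposal is correct and follows essentially the same route as the paper: the upper bound embeds each admissible translate $tK+x$ into a translated tangent cone at a support point that still misses $y$, and the lower bound uses continuity from below along $z+t(K-p)$ together with the boundary-null assumption \eqref{eq:cone-boundary-null}. The differences are minor: you use Proposition~\ref{prop:dilation-monotonicity} to get existence of the limit, where the paper works with a $\liminf$, and you justify more carefully (via $\Int\cl E=\Int E$ for convex $E$ with nonempty interior) that the uncovered part of the cone lies in its boundary.
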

\begin{pf}
  Recall that $M_{tK}(y;\mu)=\sup\{\mu(tK+x):y\notin tK+x\}$. Fix
  $p\in\partial K$ and $z\in\R^d$ such that $y\notin z+T_K(p)$. Since
  $z+t(K-p)\subset z+T_K(p)$, the set $z+t(K-p)$ is admissible for
  $M_{tK}(y;\mu)$. The sets $z+t(K-p)$ increase with $t$, and their union is
  dense in $z+T_K(p)$. The part of the tangent cone not covered by the
  union lies on its boundary. Hence \eqref{eq:cone-boundary-null} and
  continuity from below give
  $\liminf_{t\to\infty}M_{tK}(y;\mu)\ge\mu(z+T_K(p))$. Taking the supremum
  over admissible $p,z$ yields
  \begin{equation}
    \liminf_{t\to\infty}M_{tK}(y;\mu)\ge M_K^\infty(y;\mu).
    \label{eq:dilation-liminf}
  \end{equation}
  Conversely, let $y\notin tK+x$. Strict separation gives
  $u\in\Sph^{d-1}$ such that
  $\sup_{v\in tK+x}\langle v,u\rangle<\langle y,u\rangle$. Choose
  $p\in K$ satisfying $\langle p,u\rangle=h_K(u)$ and put
  $z=tp+x$. Then $tK+x=z+t(K-p)\subset z+T_K(p)$.  Moreover,
  $T_K(p)\subset\{w:\langle w,u\rangle\le0\}$, so $z+T_K(p)$ lies in
  the supporting halfspace of $tK+x$ and still misses $y$. Therefore
  $\mu(tK+x)\le M_K^\infty(y;\mu)$. Taking the supremum gives
  $M_t(y)\le M_K^\infty(y;\mu)$ for every $t$, which together with
  \eqref{eq:dilation-liminf} proves the claim.
\end{pf}

\begin{corollary}
  \label{cor:smooth-dilation}
  If $K$ is smooth and $\mu$ assigns zero mass to every affine
  hyperplane, then $\de_{tK}(y;\mu)\to \HD(y;\mu)$ for every $y\in\R^d$.
\end{corollary}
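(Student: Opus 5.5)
The plan is to apply Theorem~\ref{thm:dilation} and then identify the tangent-cone depth with halfspace depth when $K$ is smooth. Two things need checking: that the boundary-null hypothesis \eqref{eq:cone-boundary-null} holds, and that $\de_K^\infty(y;\mu)=\HD(y;\mu)$.

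First, I identify the tangent cones. If $K$ is smooth and $p\in\partial K$, the normal cone $N_K(p)$ is the ray spanned by the unique outer unit normal $n_K(p)$. The tangent cone $T_K(p)$ is the polar of $N_K(p)$, by standard convex duality. Hence $T_K(p)=\{w:\langle w,n_K(p)\rangle\le0\}$ is a closed halfspace through the origin. Every $u\in\Sph^{d-1}$ arises as $n_K(p)$ for some $p\in\partial K$, namely any point where $\langle\cdot,u\rangle$ attains $h_K(u)$. Consequently the family $\{z+T_K(p):p\in\partial K,\ z\in\R^d\}$ is exactly the family of all closed halfspaces. The boundary of each member is an affine hyperplane, so \eqref{eq:cone-boundary-null} follows from the assumption on $\mu$. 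Theorem~\ref{thm:dilation} then gives $\de_{tK}(y;\mu)\to\de_K^\infty(y;\mu)$.

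Second, I show that $M_K^\infty(y;\mu)=\sup\{\mu(H):H\text{ a closed halfspace},\ y\notin H\}$ equals $1-\HD(y;\mu)$. The inequality $\HD\le\de_K^\infty$ was already noted. For the reverse, take a closed halfspace $G\ni y$. Translate its complement slightly: let $H_\eps$ be the closed halfspace $\{w:\langle w,v\rangle\ge c+\eps\}$, where $G=\{\langle w,v\rangle\le c\}$. Then $y\notin H_\eps$, and $\mu(H_\eps)\uparrow\mu(\{\langle w,v\rangle>c\})=1-\mu(G)$ as $\eps\downarrow0$. This step uses only continuity from below; no hyperplane condition is needed. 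Hence $M_K^\infty(y;\mu)\ge1-\mu(G)$ for every $G\ni y$, which gives $\de_K^\infty\le\HD$.

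Combining the two parts proves the corollary. I expect the main point needing care to be the description of $T_K(p)$ as a halfspace. Smoothness is exactly what is needed there, since the polar of a ray is a halfspace. Everything else is a bookkeeping verification.
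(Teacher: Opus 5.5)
Your proposal is correct and follows the paper's proof. Both identify the tangent cones of a smooth body with the family of all closed halfspaces, note that the boundary-null condition reduces to hyperplane-nullity, and apply Theorem~\ref{thm:dilation}. Your explicit check that $\de_K^\infty=\HD$, by approximating open halfspaces from inside with closed ones using only continuity from below, fills in a step the paper asserts without detail.
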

\begin{pf}
  At every boundary point of a smooth convex body the tangent cone is
  a closed halfspace. Conversely, every direction occurs as an outer
  normal at a support point of $K$. Thus the tangent-cone testing
  family is exactly the family of closed halfspaces, and
  $\de_K^\infty=\HD$. The boundary-null condition in
  Theorem~\ref{thm:dilation} becomes the hyperplane-null condition.
\end{pf}

Together with Proposition~\ref{prop:dilation-monotonicity},
Corollary~\ref{cor:smooth-dilation} shows that for a smooth testing body
and a hyperplane-null measure we have  $\de_{tK}(y;\mu)\downarrow\HD(y;\mu)$ as $t\to\infty$.

\subsection{A quantitative halfspace approximation}
\label{subsec:quantitative-dilation}

The qualitative limit in Theorem~\ref{thm:dilation} can be made
uniform, with an explicit error, if the boundary of the testing body
admits uniformly sized inner tangent balls.

\begin{definition}
  \label{def:rolling-ball}
  The testing body $K$ satisfies the interior rolling-ball condition
  with radius $\rho>0$ if, for every $u\in\Sph^{d-1}$, there is a
  support point $p(u)\in\partial K$ such that
  \begin{equation}
    \langle p(u),u\rangle=h_K(u),
    \qquad
    p(u)-\rho u+B_\rho\subset K.
    \label{eq:rolling-ball}
  \end{equation}
  In particular, every convex body with $C^2$ boundary satisfies it
  for some $\rho>0$.
\end{definition}

For a probability measure $\mu$, define its one-sided slab modulus by
\begin{displaymath}
  \omega_\mu(r)
  :=\sup_{u\in\Sph^{d-1}, a\in\R}
  \mu(\{z:a-r<\langle z,u\rangle\le a\}),
  \qquad r>0.
\end{displaymath}

\begin{lemma}
  \label{lem:slab-modulus}
  If $\mu$ has compact support and assigns zero mass to every affine
  hyperplane, then $\omega_\mu(r)\to0$ as $r\downarrow0$.
\end{lemma}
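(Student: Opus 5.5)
We argue by contradiction, using compactness of the parameter space. Suppose that $\omega_\mu(r)\not\to0$. Since $\omega_\mu$ is nondecreasing in $r$, there exist $\eps>0$, a sequence $r_j\downarrow0$, unit vectors $u_j\in\Sph^{d-1}$ and levels $a_j\in\R$ with
\[
  \mu(\{z:a_j-r_j<\langle z,u_j\rangle\le a_j\})\ge\eps
\]
for all $j$. Let $C$ be a compact set containing $\supp\mu$, say $C\subset B_R$. A slab of positive mass must meet $C$, so $|a_j|\le R+r_j$, and the levels $a_j$ are bounded. Passing to a subsequence, we may assume $u_j\to u\in\Sph^{d-1}$ and $a_j\to a\in\R$.

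The key step is to trap all the slabs, for large $j$, inside a thin closed slab around the limiting hyperplane, at least on $C$. For $z\in C$ in the $j$-th slab we have
\[
  |\langle z,u\rangle-a|
  \le|\langle z,u_j\rangle-a_j|+|\langle z,u-u_j\rangle|+|a_j-a|
  \le r_j+R\|u-u_j\|+|a_j-a|=:\delta_j\to0 .
\]
Hence, for every fixed $\delta>0$ and all sufficiently large $j$,
\[
  \eps\le\mu(\{z\in C:|\langle z,u\rangle-a|\le\delta\}).
\]
We now let $\delta\downarrow0$. The sets $\{z:|\langle z,u\rangle-a|\le\delta\}$ decrease to the hyperplane $\{z:\langle z,u\rangle=a\}$, so continuity from above of the finite measure $\mu$ gives $\mu(\{\langle z,u\rangle=a\})\ge\eps>0$. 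This contradicts the assumption that $\mu$ charges no affine hyperplane.

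The only delicate point is the uniformity over directions $u$. This is exactly where compact support enters: it makes the perturbation term $\langle z,u-u_j\rangle$ uniformly small on the support. Without compact support one could instead use tightness, restricting to a compact set carrying mass at least $1-\eps/2$, and the same argument goes through.
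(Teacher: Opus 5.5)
Your proposal is correct and follows the paper's proof essentially step for step. Both arguments argue by contradiction, use compact support to bound the levels $a_j$, extract a convergent subsequence of $(u_j,a_j)$, trap the slabs on the support inside thin closed slabs around the limiting hyperplane, and conclude by continuity from above. Your explicit bound $\delta_j$ simply makes precise the containment that the paper asserts, and your tightness remark (that compact support can be dropped) is also correct.
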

\begin{pf}
  Suppose otherwise. There are $r_n\downarrow0$, unit vectors $u_n$,
  real numbers $a_n$, and $\eps>0$ such that the corresponding slabs
  have mass at least $\eps$. Compactness of the support implies that
  the relevant $a_n$ are bounded. After passing to a subsequence,
  $u_n$ and $a_n$ converge to some $u$ and $a$. For every $\eta>0$,
  the slabs for all sufficiently large $n$ are contained, on the
  support of $\mu$, in $\{z:|\langle z,u\rangle-a|\le\eta\}$. Hence
  each of these closed $\eta$-neighbourhoods has mass at least
  $\eps$. Continuity from above, as $\eta\downarrow0$, gives positive
  mass to the hyperplane $\{z:\langle z,u\rangle=a\}$, a
  contradiction.
\end{pf}

\begin{theorem}
  \label{thm:quantitative-dilation}
  Assume that $K$ satisfies \eqref{eq:rolling-ball} with radius
  $\rho$, that $\supp\mu\subset c+B_R$, and that $\mu$ assigns zero
  mass to every affine hyperplane. For $t\rho>R$, put
  \begin{displaymath}
    \delta_t:=R^2/(2(t\rho-R)).
  \end{displaymath}
  Then, simultaneously for all $y\in\R^d$,
  \begin{equation}
    0\le \de_{tK}(y;\mu)-\HD(y;\mu)
    \le\omega_\mu(\delta_t).
    \label{eq:uniform-dilation-bound}
  \end{equation}
  Consequently,
  $\sup_{y\in\R^d} |\de_{tK}(y;\mu)-\HD(y;\mu)|
  \le\omega_\mu(\delta_t)\to 0$.
    
  Suppose further that $\mu$ has a density $q$ with
  $\|q\|_\infty\le L$. Then
  \begin{equation} 
    \sup_{y \in \R^d} |\de_{tK}(y;\mu)-\HD(y;\mu)|
    \le
    \frac{L\kappa_{d-1}R^{d+1}}{2(t\rho-R)}.
    \label{eq:bounded-density-rate}
  \end{equation}
  For $t\ge2R/\rho$, the right-hand side is at most
  $L\kappa_{d-1}R^{d+1}/(t\rho)$.
\end{theorem}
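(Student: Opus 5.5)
The plan is to prove the two inequalities in \eqref{eq:uniform-dilation-bound} separately; the lower one is already available. By Proposition~\ref{prop:dilation-monotonicity}, $t\mapsto\de_{tK}(y;\mu)$ is nonincreasing. A smooth $K$ is not assumed, but the argument of Proposition~\ref{prop:equivariance} gives $\HD\le\de_{tK}$ directly for every $t$. Hence $0\le\de_{tK}-\HD$. For the upper bound it suffices to show
\[
M_{tK}(y;\mu)\ge \mu(H)-\omega_\mu(\delta_t)
\]
for every closed halfspace $H$ with $y\notin H$. Indeed $1-\HD(y;\mu)=\sup\{\mu(H^c):y\in H^c\}$ over closed halfspaces containing $y$. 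Passing to open complements and then to closed halfspaces that miss $y$ is harmless because hyperplanes are $\mu$-null. So I take $H=\{z:\langle z,u\rangle\le a\}$ with $a<\langle y,u\rangle$, and I must produce a translate of $tK$ that misses $y$ and whose mass is at least $\mu(H)-\omega_\mu(\delta_t)$.

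\textbf{Construction.} I use the rolling ball at $p(u)$. The ball $p(u)-\rho u+B_\rho\subset K$ scales to a ball of radius $t\rho$ inside $tK$, with $tp(u)$ as its point of contact with the supporting hyperplane in direction $u$. I translate $tK$ so that its supporting hyperplane in direction $u$ is $\{\langle z,u\rangle=a\}$. Since $tK+x$ then lies in $H$, it misses $y$. I also place the tangency point at $c'$, the projection of $c$ onto that hyperplane. The ball $B'$ of radius $T=t\rho$ lies in the translate and has centre $c'-Tu$. I only need the mass of $H\setminus B'$ inside $c+B_R$. The key geometric claim is that every $z\in (c+B_R)\cap H$ with $\langle z,u\rangle\le a-\delta_t$ lies in $B'$.

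\textbf{Checking the claim.} Write $z-c'=-su+w$ with $w\perp u$ and $s\ge\delta_t$. Since $z\in c+B_R$, we have $\|w\|\le R$ and $s\le 2R$; in fact $s\le R+(a-\langle c,u\rangle)$. If the hyperplane lies far from the support, the mass of $H$ is essentially all of $\mu$, and I will handle that case by the same bound. The condition $z\in B'$ reads $(T-s)^2+\|w\|^2\le T^2$, that is, $\|w\|^2\le 2Ts-s^2$. The worst case needs some care. I expect the cleaner route to be $\|w\|^2\le R^2$ combined with $s(2T-s)\ge 2s(T-R)$ for $s\le 2R$, which requires $2T-s\ge 2(T-R)$. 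Then $s\ge\delta_t=R^2/(2(T-R))$ gives $2s(T-R)\ge R^2\ge\|w\|^2$. This is why the parameter $\delta_t$ appears. If the hyperplane does not meet $c+B_R$, I simply recentre the tangency so that the ball still covers $c+B_R\cap H$. Alternatively, if $H\supset c+B_R$, I note that a big ball containing $c+B_R$ fits inside $tK$ once $t\rho>R$, and the translate can be chosen to miss $y$ because $y$ lies outside $H$.

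\textbf{Conclusion.} It follows that $\mu(tK+x)\ge\mu(H)-\mu\{a-\delta_t<\langle z,u\rangle\le a\}\ge \mu(H)-\omega_\mu(\delta_t)$. This gives \eqref{eq:uniform-dilation-bound}, and Lemma~\ref{lem:slab-modulus} gives convergence to zero. For \eqref{eq:bounded-density-rate}, a slab of width $\delta$ intersected with $c+B_R$ has volume at most $\kappa_{d-1}R^{d-1}\delta$. Hence $\omega_\mu(\delta_t)\le L\kappa_{d-1}R^{d-1}\cdot R^2/(2(t\rho-R))$. When $t\ge 2R/\rho$ we have $t\rho-R\ge t\rho/2$, which gives the final form. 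The main obstacle is the geometric containment step, namely verifying uniformly in the position of the hyperplane relative to $c+B_R$ that everything outside the thin slab is covered by the inner tangent ball.
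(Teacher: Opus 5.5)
Your proposal is correct and follows essentially the same route as the paper: the Tukey comparison for the lower bound, and the complementary halfspace representation, which is valid because hyperplanes are $\mu$-null. The rest also matches: the translate of $tK$ tangent to $\{\langle z,u\rangle=a\}$ at the projection of $c$, its inner ball of radius $t\rho$, and the containment estimate $\|w\|^2+(t\rho-s)^2\le (t\rho)^2+R^2-2(t\rho-R)s\le(t\rho)^2$ for $\delta_t\le s\le 2R$.

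The paper handles the far-hyperplane case exactly as your recentering does, by lowering $a$ to $\langle c,u\rangle+R$, which keeps $\mu(H)$ and still misses $y$. One small correction: your displayed supremum for $1-\HD(y;\mu)$ should run over closed halfspaces $H\ni y$, not over $y\in H^c$.
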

\begin{pf}
  For $u\in\Sph^{d-1}$ and $a\in\R$, write
  $H(u,a)=\{z:\langle z,u\rangle\le a\}$. The hyperplane-null
  assumption gives the complementary representation
  \begin{equation}
    1-\HD(y;\mu)
    =\sup_{u\in\Sph^{d-1}, a<\langle y,u\rangle} \mu(H(u,a)).
    \label{eq:halfspace-complementary}
  \end{equation}
  Indeed, the complement of a halfspace containing $y$ can be
  approximated from within by closed halfspaces missing $y$, and the
  boundary hyperplane carries no mass.

  Fix $u$ and $a<\langle y,u\rangle$. Halfspaces of zero mass do not
  affect the supremum in \eqref{eq:halfspace-complementary}. If
  $a>\langle c,u\rangle+R$, replace $a$ by $\langle c,u\rangle+R$;
  this does not change the mass and preserves the strict inequality
  $a<\langle y,u\rangle$. We may therefore assume
  $|a-\langle c,u\rangle|\le R$. Let $z_0=c+(a-\langle c,u\rangle)u$
  and choose the support point $p=p(u)$ from
  \eqref{eq:rolling-ball}. The set $A_t=z_0+t(K-p)$ is a translate of
  $tK$. Since $K-p$ lies in $\{v:\langle v,u\rangle\le0\}$, the set
  $A_t$ is contained in $H(u,a)$ and therefore misses $y$. The
  rolling-ball condition gives
  \begin{equation}
    z_0-t\rho u+B_{t\rho}\subset A_t.
    \label{eq:tangent-ball-inclusion}
  \end{equation}
  Take $q\in(c+B_R)\cap H(u,a)$ and write $q-z_0=v-su$,
  $v\in u^\perp$, and $s=a-\langle q,u\rangle$. Then $|v|\le R$ and
  $0\le s\le2R$. If $s\ge\delta_t$, then
  \begin{displaymath}
    |q-(z_0-t\rho u)|^2 =|v|^2+(t\rho-s)^2 \le(t\rho)^2+R^2-2(t\rho-R)s
    \le(t\rho)^2.
  \end{displaymath}
  Thus $q$ belongs to the ball in
  \eqref{eq:tangent-ball-inclusion}. The part of $H(u,a)\cap\supp\mu$
  not captured by $A_t$ is consequently contained in
  $\{z:a-\delta_t<\langle z,u\rangle\le a\}$. It follows that
  $\mu(A_t)\ge\mu(H(u,a))-\omega_\mu(\delta_t)$.  Taking the supremum
  over the halfspaces in~\eqref{eq:halfspace-complementary} gives
  $\sup_{x:\,y\notin tK+x}\mu(tK+x)
  \ge1-\HD(y;\mu)-\omega_\mu(\delta_t)$.

  Conversely, every translate of $tK$ missing $y$ is contained, by
  strict separation, in a halfspace missing $y$. Its mass is therefore
  at most $1-\HD(y;\mu)$. Translating these two inequalities from
  missed-set mass to depth proves
  \eqref{eq:uniform-dilation-bound}. The uniform convergence follows
  from Lemma~\ref{lem:slab-modulus}.

  With a bounded density $q$, the intersection of a slab of width $r$
  with $c+B_R$ has volume at most $\kappa_{d-1}R^{d-1}r$. Hence
  $\omega_\mu(r)\le L\kappa_{d-1}R^{d-1}r$, and it remains to
  substitute $r=\delta_t$. 
\end{pf}

For the next consequences, write $D_{\alpha,t}=D_\alpha^{tK}(\mu)$,
$\eps_t=\omega_\mu(\delta_t)$, and
$\alpha_{\rm H}^*(\mu)=\sup_y\HD(y;\mu)$.

\begin{corollary}
  Maintain the assumptions of Theorem~\ref{thm:quantitative-dilation},
  assume that $0<\alpha<\alpha_{\rm H}^*(\mu)$ and
  $0<\eps_t<\alpha$. Then
  \begin{equation}
    D_\alpha^{\mathrm{HD}}(\mu)\subset D_{\alpha,t}
    \subset D_{\alpha-\eps_t}^{\mathrm{HD}}(\mu).
    \label{eq:quantitative-region-sandwich}
  \end{equation}
  Assume in addition that $\HD(\,\cdot\,;\mu)$ is $C^1$ on a
  neighbourhood of the compact strip
  $\{y:\alpha-\eta\le\HD(y;\mu)\le\alpha\}$ for some $\eta>0$, and
  that
  $m_{\alpha,\eta} :=\inf_{\alpha-\eta\le\HD(y;\mu)\le\alpha}
  \|\nabla\HD(y;\mu)\|>0$.  When $\eps_t<\eta$,
  \begin{displaymath}
    d_H(D_{\alpha,t},D_\alpha^{\mathrm{HD}}(\mu))
    \le\frac{\eps_t}{m_{\alpha,\eta}}.
  \end{displaymath}
\end{corollary}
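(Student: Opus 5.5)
The sandwich \eqref{eq:quantitative-region-sandwich} follows from the uniform bound \eqref{eq:uniform-dilation-bound} by reading it on upper level sets. If $\HD(y;\mu)\ge\alpha$, then $\de_{tK}(y;\mu)\ge\HD(y;\mu)\ge\alpha$, so $y\in D_{\alpha,t}$ by Proposition~\ref{prop:erosion}; this inclusion is also Proposition~\ref{prop:equivariance}. Conversely, if $\de_{tK}(y;\mu)\ge\alpha$, then $\HD(y;\mu)\ge\de_{tK}(y;\mu)-\eps_t\ge\alpha-\eps_t$. Since $\eps_t<\alpha$, this lower level is positive. The halfspace region $D_{\alpha-\eps_t}^{\mathrm{HD}}(\mu)$ is therefore compact, because the support is compact, and nonempty, because $\alpha<\alpha_{\rm H}^*(\mu)$.

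For the Hausdorff bound, set $D=D_\alpha^{\mathrm{HD}}(\mu)$ and $D'=D_{\alpha-\eps_t}^{\mathrm{HD}}(\mu)$. By the sandwich, $D\subset D_{\alpha,t}\subset D'$, so every point of $D_{\alpha,t}$ is within distance $0$ of itself as a point of $D_{\alpha,t}$ when measured from $D$. It therefore suffices to show that every $y\in D'\setminus D$ lies within distance $\eps_t/m_{\alpha,\eta}$ of $D$. Such a $y$ lies in the strip $\{\alpha-\eta\le\HD\le\alpha\}$, because $\eps_t<\eta$.

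Follow the gradient flow of $\HD$ from $y$, normalised by $\dot\gamma=\nabla\HD/\|\nabla\HD\|^2$, so that $\HD(\gamma(s))=\HD(y)+s$ while the curve stays in the strip. The curve has speed at most $1/m_{\alpha,\eta}$. It reaches the level $\alpha$ at parameter $s=\alpha-\HD(y)\le\eps_t$, having travelled length at most $\eps_t/m_{\alpha,\eta}$, and the endpoint lies in $D$. Hence $\dist(y,D)\le\eps_t/m_{\alpha,\eta}$. Combining this with $D\subset D_{\alpha,t}$ gives the bound on $d_H(D_{\alpha,t},D)$.

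The main obstacle is justifying that this flow exists and stays in the strip until it reaches level $\alpha$. The ODE is only $C^0$ when $\HD$ is $C^1$, so Peano's theorem gives existence but not uniqueness, and uniqueness is not needed. The strip is compact and the curve increases $\HD$ monotonically, so it cannot leave through the lower level. The velocity is bounded by $1/m_{\alpha,\eta}$ on the neighbourhood, so the solution extends until $\HD$ reaches $\alpha$.

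If one prefers to avoid ODEs, use a mean-value argument instead. Let $y^*$ be a nearest point of $D$ to $y$ and suppose $\|y-y^*\|>\eps_t/m$. Step from $y$ a distance $\eps_t/m$ in the direction $\nabla\HD(y)/\|\nabla\HD(y)\|$, and compare gradients along the segment using uniform continuity of $\nabla\HD$ on the compact strip. This gives the increase, though with a small loss in the constant. For the exact constant, the flow argument is cleaner.
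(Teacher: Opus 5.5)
Your proposal is correct and follows the paper's own proof. The sandwich is read off from the uniform bound \eqref{eq:uniform-dilation-bound}, and the Hausdorff estimate comes from the flow of $\nabla\HD/\|\nabla\HD\|^2$, along which $\HD$ rises at unit rate while the speed stays at most $1/m_{\alpha,\eta}$.

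Your Peano-plus-continuation justification, which the paper leaves implicit, is sound, with one small precision. The bound $1/m_{\alpha,\eta}$ is only guaranteed on the strip itself. On the surrounding neighbourhood you just need $\nabla\HD\ne0$, which holds near the strip by continuity of $\nabla\HD$ and compactness of the strip, so that the vector field is continuous on an open set containing it.
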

\begin{pf}
  The sandwich~\eqref{eq:quantitative-region-sandwich} follows
  immediately from \eqref{eq:uniform-dilation-bound}. To obtain the
  Hausdorff estimate, consider on the strip the vector field
  $V(y)=\nabla\HD(y;\mu)/\|\nabla\HD(y;\mu)\|^2$.  Along every
  integral curve of this field, the halfspace depth increases at unit
  speed. A point of $D_{\alpha-\eps_t}^{\mathrm{HD}}(\mu)$ outside
  $D_\alpha^{\mathrm{HD}}(\mu)$ therefore reaches
  $D_\alpha^{\mathrm{HD}}(\mu)$ after time at most $\eps_t$, while its
  speed is at most $1/m_{\alpha,\eta}$. Thus every point of the outer
  set in \eqref{eq:quantitative-region-sandwich} is within
  $\eps_t/m_{\alpha,\eta}$ of $D_\alpha^{\mathrm{HD}}(\mu)$.
\end{pf}

Recall that the Tukey median of $\mu$ is the set of maximisers of the
halfspace depth of $\mu$.

\begin{corollary}
  Under the assumptions of Theorem~\ref{thm:quantitative-dilation},
  \begin{equation}
    0\le\alpha_{tK}^*(\mu)-\alpha_{\rm H}^*(\mu)
    \le\eps_t.
    \label{eq:maximal-depth-rate}
  \end{equation}
  Suppose, moreover, that the Tukey median is the single point $y_0$
  and that, in a neighbourhood of $y_0$,
  \begin{equation}
    \alpha_{\rm H}^*(\mu)-\HD(y;\mu)
    \ge c\|y-y_0\|^p
    \label{eq:tukey-median-growth}
  \end{equation}
  for some $c>0$ and $p\ge1$. If $y_t$ is any deepest point for the
  $tK$-depth, then $\|y_t-y_0\|\le(\eps_t/c)^{1/p}$ for all
  sufficiently large $t$.
\end{corollary}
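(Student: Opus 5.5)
The plan is to derive both claims directly from the uniform estimate \eqref{eq:uniform-dilation-bound} of Theorem~\ref{thm:quantitative-dilation}. We have $\HD(y;\mu)\le\de_{tK}(y;\mu)\le\HD(y;\mu)+\eps_t$ for every $y$. Taking suprema over $y$ on all three sides gives $\alpha_{\rm H}^*(\mu)\le\alpha_{tK}^*(\mu)\le\alpha_{\rm H}^*(\mu)+\eps_t$, which is \eqref{eq:maximal-depth-rate}. No attainment of the suprema is needed for this step.

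For the localisation, let $y_t$ be a deepest point for the $tK$-depth, i.e.\ $\de_{tK}(y_t;\mu)=\alpha_{tK}^*(\mu)$. Combining the upper bound in \eqref{eq:uniform-dilation-bound} at $y_t$ with \eqref{eq:maximal-depth-rate} gives
\[
\HD(y_t;\mu)\ge\de_{tK}(y_t;\mu)-\eps_t=\alpha_{tK}^*(\mu)-\eps_t\ge\alpha_{\rm H}^*(\mu)-\eps_t .
\]
Hence $\alpha_{\rm H}^*(\mu)-\HD(y_t;\mu)\le\eps_t$. If $y_t$ lies in the neighbourhood of $y_0$ where \eqref{eq:tukey-median-growth} holds, then $c\|y_t-y_0\|^p\le\eps_t$, which is the claimed bound $\|y_t-y_0\|\le(\eps_t/c)^{1/p}$.

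The main obstacle is to show that $y_t$ eventually lies in that neighbourhood; this is where ``for all sufficiently large $t$'' comes from. I would argue as follows.

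First, $\supp\mu$ is compact and $\mu$ is hyperplane-null, so $\HD(\cdot;\mu)$ is continuous and vanishes outside $\conv\supp\mu$. Hence every upper level set $\{\HD\ge\gamma\}$ with $\gamma>0$ is compact. Also $\alpha_{\rm H}^*(\mu)\ge1/(d+1)>0$.

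Next, let $V$ be an open neighbourhood of $y_0$ inside the region where \eqref{eq:tukey-median-growth} holds. Consider the compact set $\{\HD\ge\alpha_{\rm H}^*(\mu)/2\}\setminus V$. On it $\HD$ is continuous, and it never reaches the value $\alpha_{\rm H}^*(\mu)$ there, because the median is the single point $y_0\in V$. So $\HD$ attains a maximum $\alpha_{\rm H}^*(\mu)-\gamma_0$ on this set with $\gamma_0>0$. Outside the compact set, $\HD<\alpha_{\rm H}^*(\mu)/2$. Thus $\HD\le\alpha_{\rm H}^*(\mu)-\min(\gamma_0,\alpha_{\rm H}^*(\mu)/2)$ everywhere outside $V$.

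Finally, $\eps_t\to0$ by Lemma~\ref{lem:slab-modulus}. Once $\eps_t<\min(\gamma_0,\alpha_{\rm H}^*(\mu)/2)$, the inequality $\HD(y_t;\mu)\ge\alpha_{\rm H}^*(\mu)-\eps_t$ forces $y_t\in V$, and the localisation argument above applies.

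A minor point is that deepest points $y_t$ exist. The statement only asks about ``any deepest point'', so if none exists the claim is vacuous. They do exist in fact: $\de_{tK}\ge\HD$ exceeds the baseline near $y_0$, so the $K$-median proposition applies.
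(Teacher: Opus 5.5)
Your proposal is correct and follows the paper's proof. Both claims come from the uniform sandwich $\HD\le\de_{tK}\le\HD+\eps_t$, and your compactness argument (continuity of $\HD$ for a compactly supported hyperplane-null $\mu$, compact upper level sets, and a positive depth gap outside a neighbourhood of the unique median) simply spells out the paper's one-line appeal to ``uniform convergence and uniqueness of the Tukey median''. Your side remark on the existence of deepest points is not needed; to make it airtight you would add that the baseline $1-\beta_{tK,\mu}$ is zero, because $tK$ contains a ball of radius $t\rho>R$ and therefore some translate covers $\supp\mu$.
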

\begin{pf}
  The uniform depth bound gives \eqref{eq:maximal-depth-rate}. It also
  gives $\HD(y_t;\mu)\ge\alpha_{\rm H}^*(\mu)-\eps_t$. Uniform
  convergence and uniqueness of the Tukey median place $y_t$ in the
  neighbourhood where \eqref{eq:tukey-median-growth} holds for all
  large $t$. The claimed bound follows.
\end{pf}

The order $O(t^{-1})$ obtained in~\eqref{eq:bounded-density-rate}
cannot be improved in general, as can be seen in an example of
$K = B_1$ in dimension $d \geq 2$, and $\mu$ uniform on $B_1$.

\begin{proposition}
  \label{prop:rolling-helly}
  If $K$ satisfies the rolling-ball condition \eqref{eq:rolling-ball},
  then $h_T(K)=d+1$.
\end{proposition}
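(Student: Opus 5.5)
Since $h_T(K)\le d+1$ by \eqref{eq:translative-helly-range}, it suffices to exhibit $d+1$ translates of $K$ whose total intersection is empty while any $d$ of them have a common point. The plan is to combine two ingredients. The first is the dilation monotonicity already exploited in Proposition~\ref{prop:dilation-monotonicity}. The second is the fact that large dilates of a body with the rolling-ball property contain huge balls tangent to any prescribed supporting hyperplane. Note that $h_T(tK)=h_T(K)$ for every $t>0$, since scaling maps translates to translates. We may therefore work with $tK$ for large $t$.

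Take a regular simplex $\Delta=\bigcap_{i=1}^{d+1}H(u_i,a_i)$ with outer unit normals $u_i$, and the facet halfspaces $H_i=H(u_i,a_i)$. These halfspaces satisfy the Helly configuration we want: any $d$ of them intersect in an unbounded cone containing $\Delta$. Now shift the hyperplanes outward slightly to $a_i+\eps$ and inward to $a_i-\eps$, producing two simplices $\Delta^{+}\supset\Delta\supset\Delta^-$; we may choose $\eps$ so that $\Delta^-$ is nonempty. To destroy the total intersection, use instead the reversed halfspaces $H_i'=\{z:\langle z,u_i\rangle\ge a_i\}$ applied to $-u_i$. The cleaner choice is the following. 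Let $G_i=\{z:\langle z,-u_i\rangle\le -a_i+\eps\}$, with closed complements near the facets. Since $\sum_i u_i=0$, the sets $\bigcap_i G_i$ are empty for suitable $a_i$ (for instance $a_i=-1$ and $\eps$ small), whereas any $d$ of them meet in a region containing a fixed point $w_j$ at bounded distance. Concretely, for $j\ne i$ we pick $w_j$ with $\langle w_j,u_i\rangle\le a_i-\eps$ strictly for all $i\ne j$ and $\langle w_j,u_j\rangle$ large.

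Next, replace each halfspace $G_i=H(v_i,b_i)$ (where $v_i=-u_i$) by the translate $A_{i,t}=z_i+t(K-p(v_i))$ with $z_i$ on the boundary hyperplane, exactly as in the proof of Theorem~\ref{thm:quantitative-dilation}. Then $A_{i,t}\subset G_i$, so $\bigcap_i A_{i,t}\subset\bigcap_iG_i=\varnothing$. The rolling-ball inclusion \eqref{eq:tangent-ball-inclusion} gives $A_{i,t}\supset z_i-t\rho v_i+B_{t\rho}$. By the computation with $\delta_t$ in that proof, this ball contains every point of $G_i\cap(c+B_R)$ lying at distance at least $\delta_t=R^2/(2(t\rho-R))$ from the boundary hyperplane. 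Here $c+B_R$ is a ball containing the finitely many witness points $w_j$, with $z_i$ taken as the projection of $c$ onto the boundary hyperplane. Each $w_j$ lies at positive distance from the hyperplanes of $G_i$, $i\ne j$. Hence for $t$ large, $\delta_t$ is smaller than all these distances, and $w_j\in\bigcap_{i\ne j}A_{i,t}$.

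The main obstacle is the first step: arranging a configuration of $d+1$ closed halfspaces that is combinatorially "Helly-tight" with strict margins. We need empty total intersection and, for each $j$, a point strictly inside the other $d$ halfspaces. The strict margin is needed so that the $O(t^{-1})$ loss $\delta_t$ near the boundary does not destroy it. I would verify it via the relation $\sum u_i=0$ for the regular simplex normals. For any $j$, the normals $\{u_i\}_{i\ne j}$ are linearly independent. The system $\langle w,u_i\rangle=\beta_i$ for $i\ne j$ is therefore solvable for any targets, so strict margins are automatic. Meanwhile, summing the $d+1$ inequalities $\langle z,v_i\rangle\le b_i$ with $\sum b_i<0$ shows the total intersection is empty. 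With these in place, the translates $A_{i,t}$ of $tK$, rescaled by $1/t$, are translates of $K$ witnessing $h_T(K)\ge d+1$.
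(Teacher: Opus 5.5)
Your proposal is essentially the paper's proof. Both use halfspaces $\{\langle z,v_i\rangle\le b_i\}$ with $v_i$ the vertices of a regular simplex, so that $\sum_i v_i=0$ and $\sum_i b_i<0$ force an empty intersection (the paper takes $v_i=u_i$, $b_i=-1$), approximate each from inside by the rolling-ball translate $z_i+t(K-p(v_i))$ tangent to its boundary hyperplane, let the tangent balls capture for large $t$ witnesses lying strictly inside the other $d$ halfspaces (the paper uses $y_k=cu_k$ with $c>d$), and conclude by scale invariance of $h_T$.

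Do fix the sign slips in your second paragraph. With $a_i=-1$ you get $b_i=1+\eps>0$ and $0\in\bigcap_iG_i$, and the condition $\langle w_j,u_i\rangle<a_i-\eps$ places $w_j$ outside $G_i=\{z:\langle z,u_i\rangle\ge a_i-\eps\}$. The correct requirements are those in your last paragraph, $\sum_ib_i<0$ and $\langle w_j,v_i\rangle<b_i$ for $i\ne j$ (e.g.\ $a_i=1$). Also, when you quote the $\delta_t$ computation, take $R$ large enough that every boundary hyperplane is within distance $R$ of $c$, so that $0\le s\le 2R$ holds there.
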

\begin{pf}
  The upper bound follows from ordinary Helly's theorem
  from~\eqref{eq:translative-helly-range}. For the reverse bound,
  choose unit vectors $u_1,\ldots,u_{d+1}$ that form the vertices of a
  regular simplex centred at the origin, so that
  $\sum_{i=1}^{d+1}u_i=0$, and $\langle u_i,u_j\rangle=-\frac1d$ when
  $i\ne j$. Put $H_i=\{x:\langle x,u_i\rangle\le-1\}$. The total
  intersection of the $H_i$ is empty, since summing their defining
  inequalities would give $0\le-(d+1)$.
  
  For each $i$, choose $p_i=p(u_i)$ from \eqref{eq:rolling-ball}, put
  $z_i=-u_i$, and consider $G_i(t)=z_i+t(K-p_i)$.  This is a translate
  of $tK$, it is contained in $H_i$, and it contains the tangent ball
  $z_i-t\rho u_i+B_{t\rho}$.  Fix $c>d$ and put $y_k=cu_k$. For
  $i\ne k$ we get $\langle y_k,u_i\rangle=-\frac cd<-1$, so $y_k$ lies
  strictly inside $H_i$. More explicitly, write $y_k-z_i=v-su_i$ with
  $v\in u_i^\perp$ and $s>0$. Then
  $\|y_k-(z_i-t\rho u_i)\|^2 =\|v\|^2+(t\rho-s)^2\le(t\rho)^2$ once
  $2t\rho s\ge \|v\|^2+s^2$. Hence $y_k$ belongs to the displayed
  tangent ball for all sufficiently large $t$. Since only finitely
  many pairs $(i,k)$ occur, one value of $t$ works for all of them. It
  follows that $y_k\in\bigcap_{i\ne k}G_i(t)$ for every $k$, whereas
  the total intersection of the $G_i(t)$ is empty because it is
  contained in the total intersection of the $H_i$. Thus the family of
  translates of $tK$ has Helly number at least $d+1$.  Positive
  homotheties do not change the translative Helly number, so
  $h_T(K)\ge d+1$.
\end{pf}

\begin{remark}
  Theorems~\ref{thm:sharp-centerpoint} and
  \ref{thm:quantitative-dilation} exhibit two complementary effects of
  the geometry of $K$. A body satisfying the rolling-ball condition
  has the classical worst-case centerpoint constant $1/(d+1)$ and
  converges quantitatively to halfspace depth. A parallelotope has the
  larger constant $1/2$, but its nonsmooth tangent cones can prevent
  convergence to halfspace depth, as the next example shows.
\end{remark}

\begin{example}[A nonsmooth limit different from halfspace depth]
  Let $K=[-1,1]^2$, $y=0$, and let $\mu$ be the average of two smooth
  probability densities supported in discs of radius $r<1/4$ centred
  at $a=(1,-1/2)$ and $b=(-1/2,1)$.  Both discs lie in the open
  halfspace $\{x_1+x_2>0\}$, so a closed halfspace missing the origin
  contains the whole support and $\HD(0;\mu)=0$.

  A translate of $tK$ is an axis-parallel square. If it met both
  support components, its projection on each coordinate axis would
  contain both a negative and a positive number, and therefore the
  square would contain the origin. Hence a translate missing the
  origin captures at most one component. Conversely, for all
  sufficiently large $t$, one component can be covered by a square
  missing the origin. Thus $\de_{tK}(0;\mu)=1/2$ for all sufficiently
  large $t$.
\end{example}

\section{Uniform distributions and covariograms}

\subsection{Uniform distribution on a convex body}
\label{sec:unif-distr-conv}

Let $A\in\Kc^d_\circ$ and let $\mu$ be uniform on $A$. Then
$f_{K,\mu}(x) = |A\cap(K+x)|/|A|$, which is the normalised
\emph{cross-covariogram} of $(A,K)$, see \cite{Schneider2014}. Since
$\mu$ is $K$-smooth, the depth is continuous; for an example see
Figure~\ref{fig:uniform-ellipse-pentagon}.

\begin{proposition}
  \label{prop:overlap-concavity}
  The function $x\mapsto |A\cap(K+x)|^{1/d}$ is concave on its
  support. Consequently, each strict superlevel set $L^>_\beta$ is
  convex, and for $\beta>0$ its closure is a compact convex set.
\end{proposition}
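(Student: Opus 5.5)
The plan is to apply the Brunn--Minkowski inequality directly to the sections $A\cap(K+x)$. Put $g(x):=|A\cap(K+x)|$ and let $S:=\{x:g(x)>0\}$ be its support; this is the set where $\Int(A)\cap\Int(K+x)\ne\varnothing$, and its closure is $A+\check K$. Both are convex. Fix $x_0,x_1\in S$ and $\lambda\in(0,1)$, and put $x_\lambda=(1-\lambda)x_0+\lambda x_1$. The key inclusion to verify is
\begin{displaymath}
  (1-\lambda)\bigl(A\cap(K+x_0)\bigr)+\lambda\bigl(A\cap(K+x_1)\bigr)
  \subset A\cap(K+x_\lambda).
\end{displaymath}
It follows from convexity. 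If $a_i\in A$ and $a_i=k_i+x_i$ with $k_i\in K$, then $(1-\lambda)a_0+\lambda a_1\in A$. Moreover, $(1-\lambda)a_0+\lambda a_1=(1-\lambda)k_0+\lambda k_1+x_\lambda\in K+x_\lambda$.

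Both sets on the left are nonempty compact convex sets. The Brunn--Minkowski inequality $|(1-\lambda)P+\lambda Q|^{1/d}\ge(1-\lambda)|P|^{1/d}+\lambda|Q|^{1/d}$, which holds for nonempty compact sets, then gives $g(x_\lambda)^{1/d}\ge(1-\lambda)g(x_0)^{1/d}+\lambda g(x_1)^{1/d}$. This is the asserted concavity of $g^{1/d}$ on $S$. The same argument with one endpoint where the intersection is nonempty but null shows that concavity in fact holds on the convex set $\{x:A\cap(K+x)\ne\varnothing\}=A+\check K$. On $S$ alone this is not needed.

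For the consequences, note that $f_{K,\mu}=g/|A|$. For $\beta\ge0$ we have $L^>_\beta=\{x\in S: g(x)^{1/d}>(\beta|A|)^{1/d}\}$ when $\beta>0$, and $L^>_0=S$. A strict superlevel set of a concave function on a convex domain is convex, so $L^>_\beta$ is convex. Its closure is therefore convex as well. For $\beta>0$, $L^>_\beta$ is bounded by Lemma~\ref{lem:f-basic}, since $f\to0$ at infinity. Hence its closure is compact (possibly empty when $\beta\ge\beta_{K,\mu}$, which is still a compact convex set in the trivial sense).

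The only mildly delicate point is the domain of concavity. One has to make sure that the Brunn--Minkowski step is applied only where both intersections are nonempty, and that the set $S$ is itself convex. Convexity of $S$ follows from the displayed inclusion together with $g>0$ at the endpoints, because then the right-hand side has positive volume. I expect no real obstacle beyond this bookkeeping.
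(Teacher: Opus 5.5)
Your proof is correct and is essentially the paper's argument. It uses the same inclusion $(1-\lambda)\bigl(A\cap(K+x_0)\bigr)+\lambda\bigl(A\cap(K+x_1)\bigr)\subset A\cap(K+x_\lambda)$ followed by the Brunn--Minkowski inequality, and your extra bookkeeping just spells out details the paper leaves implicit (convexity of the support, and boundedness of $L^>_\beta$ via Lemma~\ref{lem:f-basic}, or more simply via $L^>_\beta\subset A+\check K$).
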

\begin{pf}
  For $x_t=(1-t)x_1+tx_2$, convexity of $A$ and $K$ gives
  $(1-t)(A\cap(K+x_1))+t(A\cap(K+x_2)) \subset A\cap(K+x_t)$. The
  Brunn--Minkowski inequality proves the claimed concavity. The rest
  follows from Proposition~\ref{prop:erosion}.
\end{pf}

Plateaux may occur when $K+x$ can move inside $A$ without losing
volume. Thus convexity of the overlap level sets does not by itself
guarantee strict convexity, uniqueness of active translates, or
smoothness of the depth boundary.

\begin{figure}[ht]
\centering
\begin{subfigure}{0.48\textwidth}
 \centering\includegraphics[width=\linewidth]{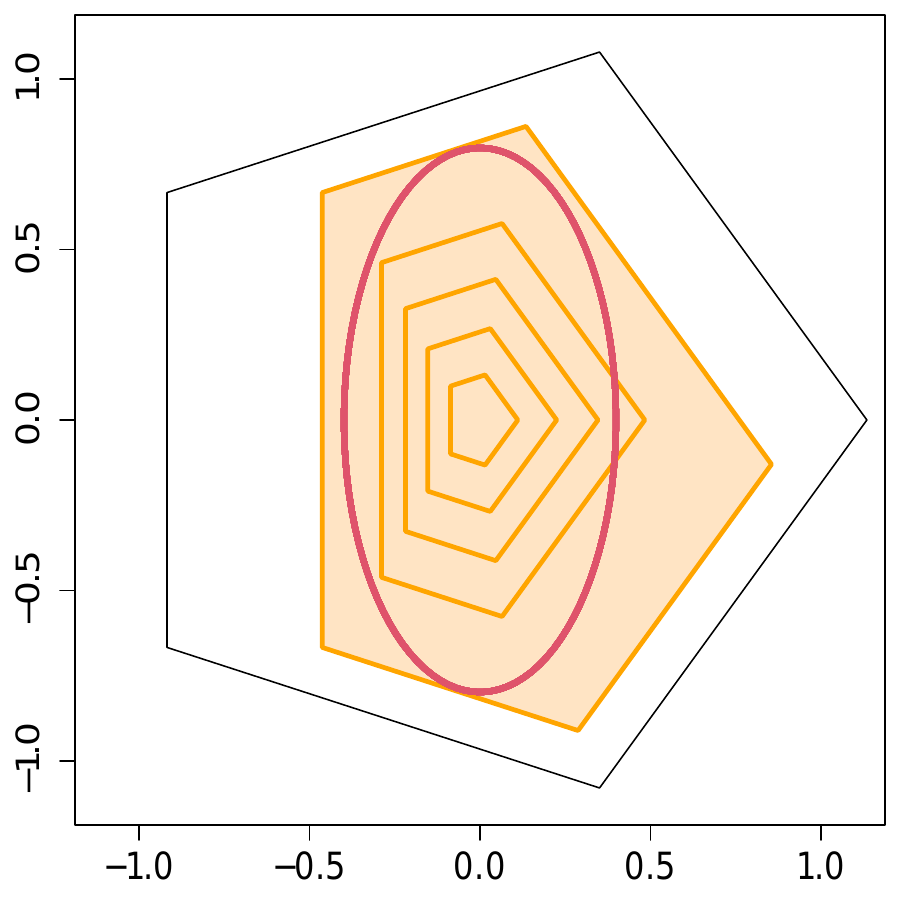}
 \caption{Nested depth regions.}
\end{subfigure}\hfill
\begin{subfigure}{0.48\textwidth}
 \centering\includegraphics[width=\linewidth]{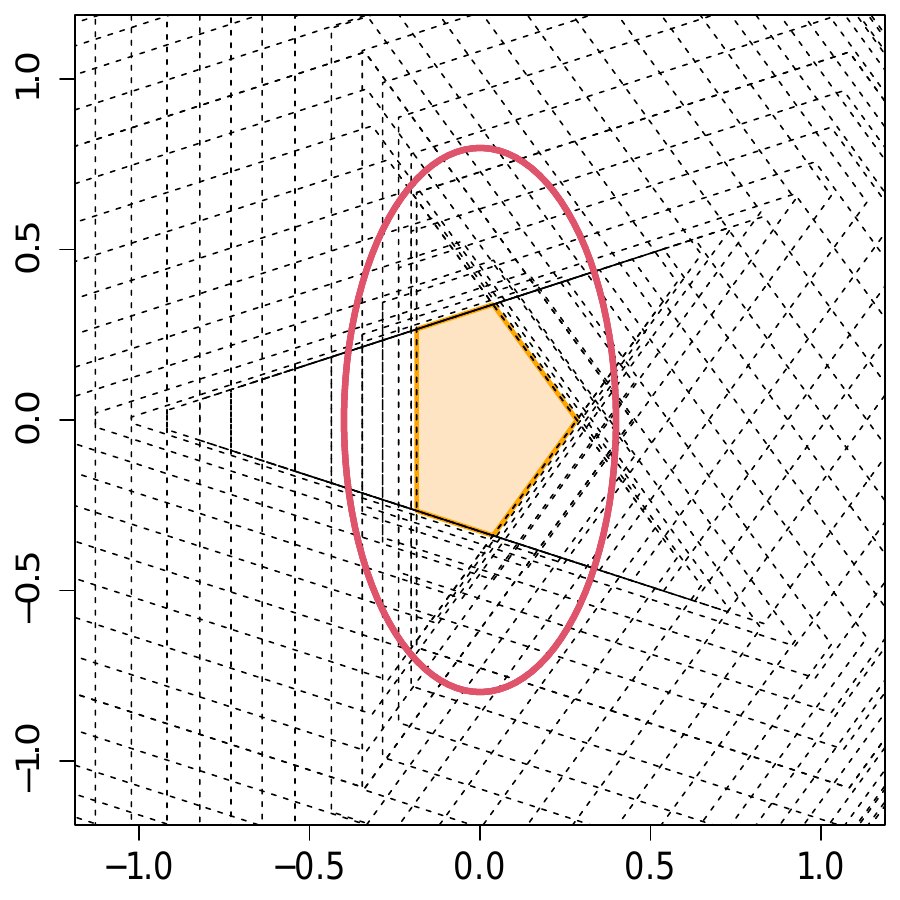}
 \caption{Active translates at one level.}
\end{subfigure}
\caption{Section~\ref{sec:unif-distr-conv}: Uniform distribution on an ellipse with a pentagonal testing
body. The orange regions are $K$-depth regions and the dashed curves in
the right panel are active translates.}
\label{fig:uniform-ellipse-pentagon}
\end{figure}

\subsection{Ball floating bodies and relative surface areas}

The terminology of Sch\"utt, Werner and Yalikun
\cite{SchuettWernerYalikun2025} agrees with the $K$-convexity used
above apart from the fact that they use $C$ instead of $K$. A nonempty
proper $C$-convex set is called a \emph{$C$-ball-convex set}; if it
has nonempty interior, it is a $C$-ball-convex body.  For such a body
$A$ and $\delta\ge0$, they define the $C$-ball floating body by
\begin{equation}
  A_\delta^C
  :=\bigcap_{x:\,|A\setminus(C+x)|\le\delta}(C+x).
  \label{eq:C-ball-floating-definition}
\end{equation}
When $C=B_R$, the set $A$ is said to be $R$-ball-convex; the body
$A_\delta^C$ is called the $R$-ball floating body and denoted by
$A_\delta^R$.  The translation-mass function is the normalised
cross-covariogram used in \cite{SchuettWernerYalikun2025}.


Werner and Yalikun \cite{WernerYalikun2026} define a weighted version.
Let $w:A\to(0,\infty)$ be continuous and put
\[
  W=\int_A w(z)\,\mathrm dz,
  \qquad
  \mu_w(\mathrm dz)=\frac{w(z)\1_A(z)}{W}\,\mathrm dz.
\]
The weighted $C$-ball floating body $F_C(A,w,\delta)$ is defined as
the intersection of $C+x$ over all $x$ such that
$\int_{A\setminus(C+x)}w(z)\,\mathrm dz\le\delta$.

\begin{proposition}[Weighted floating-body identification]
  \label{prop:weighted-floating-identification}
  Under the assumptions above, for every $\alpha\in(0,1)$ we have
  $D_\alpha^C(\mu_w)=F_C(A,w,\alpha W)$.
\end{proposition}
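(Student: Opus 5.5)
The plan is to reduce the statement to a comparison of two index sets of shifts and then show that they produce the same intersection. For $\mu=\mu_w$ we have $f(x)=\mu_w(C+x)=1-W^{-1}\int_{A\setminus(C+x)}w(z)\,\mathrm dz$. Hence the condition $\int_{A\setminus(C+x)}w\le\alpha W$ is equivalent to $f(x)\ge1-\alpha$. By definition of $F_C$ and by Proposition~\ref{prop:erosion},
\begin{displaymath}
  F_C(A,w,\alpha W)=\bigcap_{x\in L^{\ge}_{1-\alpha}}(C+x),
  \qquad
  D_\alpha^C(\mu_w)=\bigcap_{x\in L^{>}_{1-\alpha}}(C+x).
\end{displaymath}
Since $L^>_{1-\alpha}\subset L^{\ge}_{1-\alpha}$, we get $F_C\subset D_\alpha^C$ at once. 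For the reverse inclusion, note that $C$ is closed, so $\bigcap_{x\in L}(C+x)=\bigcap_{x\in\cl L}(C+x)$ (the same observation used in Theorem~\ref{thm:c2-envelope}). It therefore suffices to prove that $L^{\ge}_{1-\alpha}\subset\cl L^{>}_{1-\alpha}$, i.e.\ that every $x$ with $f(x)=1-\alpha$ is a limit of shifts with strictly larger mass.

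Fix such an $x$. Since $1-\alpha\in(0,1)$ and $w>0$ on $A$, the set $A\cap(C+x)$ has positive volume. Being convex, it therefore has nonempty interior, and we pick $c$ in that interior. Since $f(x)<1$, the set $A\setminus(C+x)$ has positive volume, so the open set $\Int A\setminus(C+x)$ is nonempty, and we pick $a$ in it. Because $A$ is a $C$-ball-convex (proper) body, there is $x'$ with $A\subset C+x'$. Put $x_s=(1-s)x+sx'$. Convexity of $C$ gives
\begin{displaymath}
  C+x_s\supset(1-s)(C+x)+s(C+x'),
\end{displaymath}
and convexity of $A$ then yields
\begin{displaymath}
  A\cap(C+x_s)\supset E_s:=(1-s)\bigl(A\cap(C+x)\bigr)+sA .
\end{displaymath}
Taking $s$ equal to $0$ in the second summand's coefficient shows in particular that $A\cap(C+x)\subset C+x_s$, so no mass is lost along this path.

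The main step is to show that $E_s\setminus(C+x)$ has nonempty interior for every small $s>0$. To do this, let $b$ be the exit point of the segment $[c,a]$ from $C+x$. Since $c$ is interior to $C+x$, points of the segment beyond $b$ lie outside $C+x$. Take $c'=b-\eps(a-c)\in A\cap(C+x)$ with $\eps$ small compared with $s$. Then $(1-s)c'+sa=b+\bigl(s-(1-s)\eps'\bigr)(a-b)$ for a suitable $\eps'\to0$, which is strictly beyond $b$ on the segment. Hence this point lies in $E_s$ but not in the closed set $C+x$.

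I expect the fiddly part to be upgrading this single point to a set of positive measure. The point is an interior point of $E_s$: both summands are convex bodies, up to choosing $c'$ slightly inside $\Int(A\cap(C+x))$. So a small ball around it lies in $E_s\setminus(C+x)\subset A\cap\bigl((C+x_s)\setminus(C+x)\bigr)$. Since $w>0$ and is continuous, this ball carries positive $\mu_w$-mass. Combined with $A\cap(C+x)\subset C+x_s$, this gives $f(x_s)>f(x)=1-\alpha$ for all small $s>0$. Since $x_s\to x$, we conclude $x\in\cl L^{>}_{1-\alpha}$, which completes the identification.
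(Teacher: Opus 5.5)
Your proof is correct and follows essentially the same route as the paper. Both reduce the claim to showing that the nonstrict level set lies in the closure of the strict one, using that intersections over a shift set and over its closure agree. Both then move $x$ toward a covering shift via the inclusions $(1-s)\bigl(A\cap(C+x)\bigr)+sA\subset A\cap(C+x_s)$ and $A\cap(C+x)\subset C+x_s$. The only difference is in obtaining the positive-volume gain: the paper gets it in one line from Brunn--Minkowski, namely $|(1-s)P+sA|^{1/d}\ge(1-s)|P|^{1/d}+s|A|^{1/d}>|P|^{1/d}$ with $P=A\cap(C+x)$ and $|P|<|A|$, whereas your exit-point construction produces an explicit ball in $E_s\setminus(C+x)$ (and $a\in\Int A$ alone already makes $(1-s)c'+sa$ an interior point of $E_s$, so no adjustment of $c'$ is needed).
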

\begin{pf}
  The high-mass condition is equivalent to
  $\int_{A\setminus(C+x)}w(z)\,\mathrm dz<\alpha W$. The weighted loss
  is continuous in $x$: this follows by dominated convergence, since
  the boundary of every translate of the convex body $C$ has Lebesgue
  measure zero. It remains to replace the strict inequality by a
  nonstrict one. Fix a covering shift $x_0$ with $A\subset C+x_0$, and
  let $x_t=(1-t)x+tx_0$. Convexity gives
  $(1-t)(A\cap(C+x))+tA\subset A\cap(C+x_t)$ and also
  $A\cap(C+x)\subset A\cap(C+x_t)$. If the weighted loss at $x$ lies
  strictly between $0$ and $W$, then $|A\cap(C+x)|<|A|$. The
  Brunn--Minkowski inequality therefore shows that the last inclusion
  has a difference of positive volume for every $t\in(0,1)$. Since $w$
  has a positive minimum on the compact set $A$, the weighted overlap
  strictly increases. Hence every point of a nonstrict loss level is a
  limit of points in the corresponding strict sublevel set. Intersecting translates over a
  shift set or over its closure gives the same closed set. The
  expression for $D_\alpha^C(\mu_w)$ follows from
  Proposition~\ref{prop:erosion}. 
\end{pf}

\subsection{Uniform distribution on the testing body}
\label{sec:unif-distr-test}

Assume now that $\mu$ is uniform on $K$ itself. Its covariogram is $g_K(x)=|K\cap(K+x)|$, and $f_{K,\mu}(x)= g_K(x)/|K|$. The maximum is $1$ and is attained uniquely at $x=0$. Thus
$S_K(\mu)=\Int K$ by Proposition~\ref{prop:effective-support}.  The
function $g_K$ is even, and $g_K^{1/d}$ is concave on its support.
For $0\le\beta<1$, put $U_\beta=\cl\{x:g_K(x)>\beta|K|\}$. Then $U_\beta$ is compact, convex, and centrally symmetric. For
$0<\beta<1$ it equals $\{x:g_K(x)\ge\beta|K|\}$, while
$U_0=K + \check{K}$. Consequently, for $\alpha\in(0,1]$, we get $D_\alpha^K(\mu)=K\ominus U_{1-\alpha}
  =\{y:y+U_{1-\alpha}\subset K\}$. The endpoint $\alpha=0$ is not included: if the strict-level
definition is extended formally to $\alpha=0$, then $D_0^K=\R^d$,
whereas the right limit $D_{0+}:=\cl\bigcup_{\alpha>0}D_\alpha^K$ equals
$K$.

For $K=[0,1]$ in $\R$, $g_K(x)=1-|x|$ for $|x|\le1$, so
$D_\alpha^K(\mu)=[\alpha,1-\alpha]$, for all $0<\alpha\le\frac12$, and
the region is empty for $\alpha>1/2$. The depth is
$\de_K(y;\mu)= \min(y,1-y)$ for $0\le y\le1$, and $\de_K(y;\mu)= 0$
for $y\notin[0,1]$. An example of a bivariate $K$-depth region with
$\mu$ uniform on $K$ is in Figure~\ref{fig:uniform-K}.

\begin{figure}[ht]
\centering
\begin{subfigure}{0.43\textwidth}
 \centering\includegraphics[width=\linewidth]{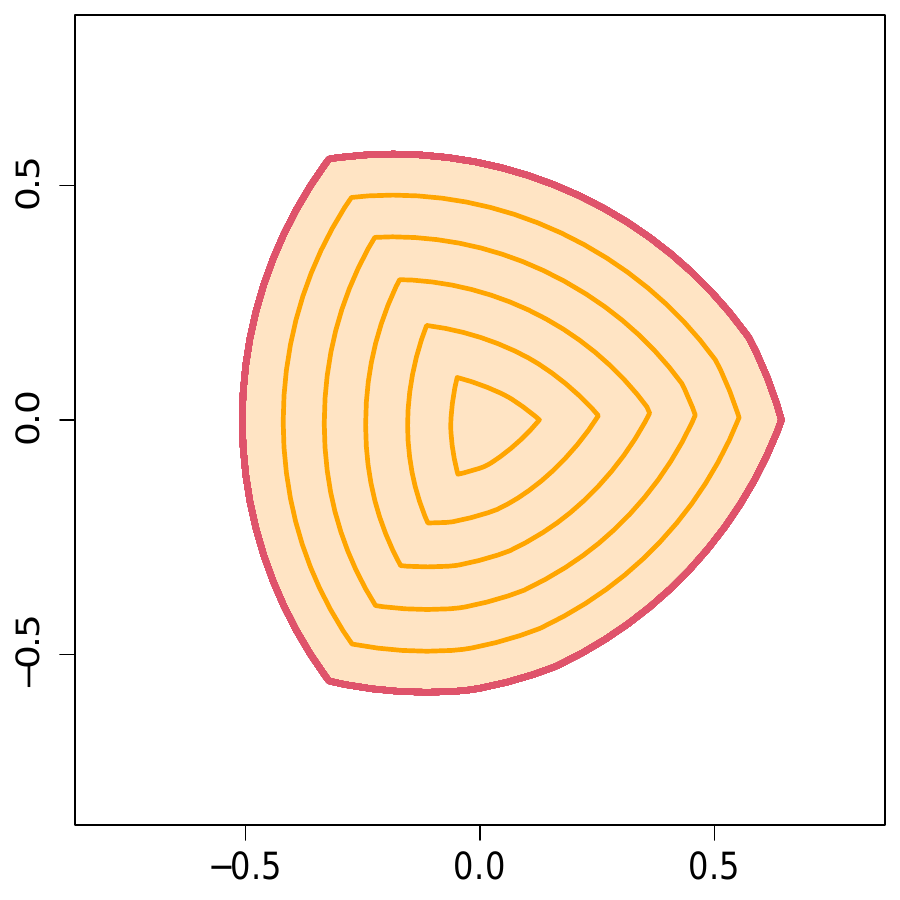}
 \caption{Nested depth regions.}
\end{subfigure}\hfill
\begin{subfigure}{0.43\textwidth}
 \centering\includegraphics[width=\linewidth]{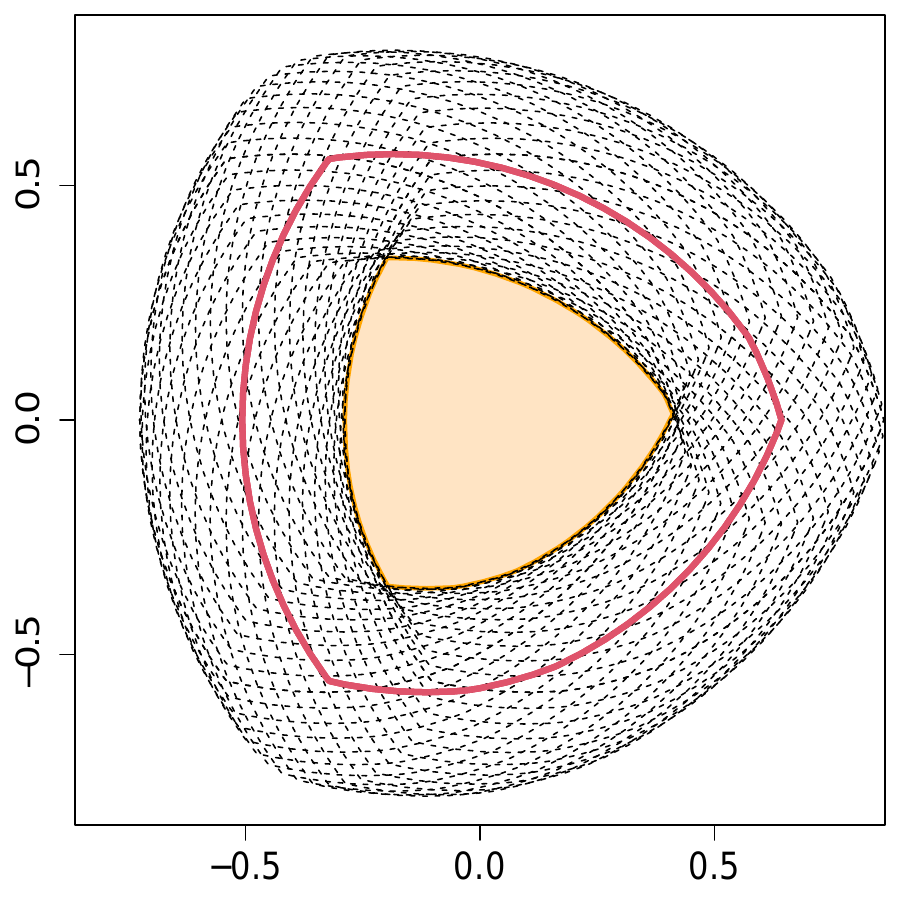}
 \caption{Active translates at one level.}
\end{subfigure}
\caption{Section~\ref{sec:unif-distr-test}: $K$-depth regions when the distribution is uniform on a
nonsymmetric testing body $K$.}
\label{fig:uniform-K}
\end{figure}

\section{Empirical depth and computation}

Let $X_1,X_2,\dots$ be independent observations with common law
$\mu$. Throughout this section the testing body $K$ is fixed and
deterministic unless explicitly stated otherwise.

\subsection{Finite-sample representation}

For a shift $x$, define its coverage count by
$N_n(x):=\sum_{i=1}^n\1_{\{X_i\in K+x\}}$, so that $n^{-1}N_n$ is the
translation-mass function generated by the empirical measure
$\mu_n:=\frac1n\sum_{i=1}^n\delta_{X_i}$
Hence, with $m_n(\alpha)=\lfloor n(1-\alpha)\rfloor+1$, we have
\begin{align}
  L^>_{1-\alpha,n}
   =\{x:N_n(x)\ge m_n(\alpha)\}, \quad \mbox{and} \quad  D_\alpha^K(\mu_n)
  =K\ominus\cl\conv L^>_{1-\alpha,n}.
    \label{eq:empirical-erosion}
\end{align}
Corollary~\ref{cor:empirical-centerpoint} also gives the exact
sample-independent guarantee
$D_{\lceil n/h_T(K)\rceil/n}^K(\mu_n)\ne\varnothing$, and shows that
this level cannot be increased uniformly over all samples.

If the sample is contained in at least one translate of $K$, then, for
every $0<\alpha\le1/n$ we have
$D_\alpha^K(\mu_n)=\operatorname{conv}_K\{X_1,\dots,X_n\}$.  Indeed, for
$0<\alpha\le1/n$, the strict inequality $\mu_n(K+x)>1-\alpha$ holds
exactly when $K+x$ contains all $n$ observations. Intersecting those
translates gives the sample $K$-hull.

\subsection{Uniform consistency}

The deterministic estimate in Theorem~\ref{thm:joint-continuity}
applies directly to empirical measures.

\begin{theorem}
  \label{thm:uniform-consistency}
  If $\mu$ is $K$-smooth, then
  $\sup_{y\in\R^d} \bigl|\de_K(y;\mu_n)-\de_K(y;\mu)\bigr| \to 0$
  almost surely.  More generally, the same conclusion holds whenever
  the translate class $\{K+x:x\in\R^d\}$ is $\mu$-Glivenko--Cantelli.
\end{theorem}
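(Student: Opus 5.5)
The plan is to reduce everything to the deterministic bound \eqref{eq:depth-lipschitz-measure} of Theorem~\ref{thm:joint-continuity}. That bound is pathwise: for every $n$ and every realisation,
\begin{displaymath}
  \sup_{y\in\R^d}\bigl|\de_K(y;\mu_n)-\de_K(y;\mu)\bigr|
  \le\sup_{x\in\R^d}\bigl|\mu_n(K+x)-\mu(K+x)\bigr|.
\end{displaymath}
This holds because, for fixed $y$, the two suprema $M_K(y;\mu_n)$ and $M_K(y;\mu)$ run over the same set of admissible shifts $\{x:y\notin K+x\}$, and no smoothness is needed for this step. So it suffices to show that the right-hand side tends to zero almost surely. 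This is exactly the Glivenko--Cantelli property of the class $\{K+x:x\in\R^d\}$, which settles the ``more generally'' part at once. (Measurability of the supremum can be handled by restricting to rational $x$ via upper semicontinuity, or by reading the statement with outer probability.)

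For the $K$-smooth case I plan two routes, either of which suffices. \textbf{First route:} by Varadarajan's theorem, $\mu_n\to\mu$ weakly almost surely. On that event, Lemma~\ref{lem:uniform-translates} applies pathwise because $\mu$ is $K$-smooth, and it gives $\sup_x|\mu_n(K+x)-\mu(K+x)|\to0$. Combined with the display above, this yields the claim; effectively it is Theorem~\ref{thm:joint-continuity} applied on a probability-one event. \textbf{Second route:} the class of translates of a convex body is a VC class, since it sits inside the class of convex polytopes with a bounded number of facets after approximation. Alternatively one can use that translates of a fixed convex set have finite VC dimension, by a Helly-type or halfspace-intersection argument. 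Either way it is Glivenko--Cantelli for every $\mu$, with no smoothness at all.

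The main thing to get right is the justification of the almost sure weak convergence together with the pathwise use of Lemma~\ref{lem:uniform-translates}. That lemma is stated for a deterministic weakly convergent sequence. Its proof uses the Prokhorov distance $\rho_n\to0$ and the uniform boundary-band modulus $b_K$, which depends only on $\mu$ and $K$, so applying it $\omega$-by-$\omega$ is legitimate once weak convergence holds almost surely. For that I would cite Varadarajan's theorem (separability of $\R^d$ suffices). I would keep the first route as the main proof, because it stays within the paper's tools, and mention the VC argument only as a remark.
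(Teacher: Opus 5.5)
Your first route is correct and is essentially the paper's own proof. It uses the bound \eqref{eq:depth-lipschitz-measure}, which holds pathwise for any two measures, together with almost sure weak convergence of $\mu_n$ and a pathwise application of Lemma~\ref{lem:uniform-translates}. The Glivenko--Cantelli case is read off from the same bound. A minor point: upper semicontinuity does not justify restricting the supremum to rational $x$, because an upper semicontinuous function can exceed its supremum over a dense set. The reduction is unnecessary anyway, since the conclusion holds deterministically on the probability-one event of weak convergence.

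Your second route should be dropped, because its central claim is false: the translates of a convex body need not form a VC class.
\begin{enumerate}
\item The paper's remark after the pointwise limit theorem cites \cite[Example~4]{NaszodiTaschuk2010}, a convex body in $\R^3$ whose translates have infinite VC dimension. So no Helly-type or halfspace-intersection argument can give a VC bound when $d\ge3$. Only in the plane does \cite[Theorem~3]{NaszodiTaschuk2010} bound the VC dimension by $3$.
\item Polytope approximation does not repair this. Shattering is not stable under Hausdorff approximation, the number of facets must grow as the error shrinks, and the translates of $K$ are not a subclass of the approximating class.
\item You can sandwich $P^-\subset K\subset P^+$ between polytopes and use that the translates of each $P^\pm$ are VC. But closing the gap then requires $\sup_x\mu\bigl((P^+\setminus P^-)+x\bigr)\le\sup_x\mu\bigl((\partial K+x)^r\bigr)\to0$. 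That is exactly the $K$-smoothness input of your first route.
\end{enumerate}
Hence your assertion that the class is Glivenko--Cantelli ``for every $\mu$, with no smoothness at all'' is unsupported for $d\ge3$.
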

\begin{pf}
  The empirical measures converge weakly to $\mu$ almost surely. Under
  $K$-smoothness, Lemma~\ref{lem:uniform-translates} and
  \eqref{eq:depth-lipschitz-measure} give the result. Under the
  alternative assumption, the right-hand side of
  \eqref{eq:depth-lipschitz-measure} converges to zero by the
  definition of a Glivenko--Cantelli class.  The uniform
  continuity-set principle used here goes back to \cite{RangaRao1962}.
\end{pf}

Note that the family of translates of any convex body is a
Glivenko--Cantelli class in dimension $d=2$, see \cite{NaszodiTaschuk2010}.

Uniform convergence of the functions does not by itself guarantee
Hausdorff convergence at a plateau level. The correct condition is
continuity of the nested population regions at the chosen level.

\begin{corollary}
  Maintain the assumptions of Theorem~\ref{thm:uniform-consistency},
  and assume additionally that $D_\alpha^K(\mu)$ is a nonempty compact
  set and
  \begin{equation}
    \lim_{\eta\downarrow0}
    d_H(D_{\alpha+\eta}^K(\mu),D_\alpha^K(\mu))=0,
    \qquad
    \lim_{\eta\downarrow0}
    d_H(D_{\alpha-\eta}^K(\mu),D_\alpha^K(\mu))=0.
    \label{eq:H-regular}
  \end{equation}
  Then $d_H(D_\alpha^K(\mu_n),D_\alpha^K(\mu))\to 0$ almost surely.
\end{corollary}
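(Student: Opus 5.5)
The argument is the same sandwich used at the end of the proof of Theorem~\ref{thm:K-stability}, with the testing-body perturbation replaced by the empirical perturbation. It runs on a single almost-sure event. Put
\[
  \Delta_n:=\sup_{y\in\R^d}\bigl|\de_K(y;\mu_n)-\de_K(y;\mu)\bigr| .
\]
Theorem~\ref{thm:uniform-consistency} gives $\Delta_n\to0$ on an event $\Omega_0$ of probability one. On $\Omega_0$, the argument below is purely deterministic.

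\emph{Sandwich.} Fix $\eta>0$ with $\eta<\alpha$, so that $\alpha-\eta>0$ and the levels are admissible. For all $n$ with $\Delta_n<\eta$, the level-set description $D_\alpha^K(\nu)=\{y:\de_K(y;\nu)\ge\alpha\}$ from Proposition~\ref{prop:erosion} gives
\[
  D_{\alpha+\eta}^K(\mu)\subset D_\alpha^K(\mu_n)\subset D_{\alpha-\eta}^K(\mu).
\]
Indeed, if $\de_K(y;\mu)\ge\alpha+\eta$, then $\de_K(y;\mu_n)>\alpha$. Likewise, if $\de_K(y;\mu_n)\ge\alpha$, then $\de_K(y;\mu)>\alpha-\eta$.

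\emph{Passing to Hausdorff distance.} We need $D_{\alpha\pm\eta}^K(\mu)$ to be nonempty and compact for small $\eta$.
\begin{enumerate}[label=(\alph*)]
  \item Regularity~\eqref{eq:H-regular} presupposes finite Hausdorff distances. So for small $\eta$, the set $D_{\alpha-\eta}^K(\mu)$ is bounded, and it is closed by Proposition~\ref{prop:erosion}. The set $D_{\alpha+\eta}^K(\mu)$ is nonempty, because $d_H(\varnothing,\cdot)$ is infinite under the usual convention.
  \item Given nested compact sets $P\subset Q\subset R$ with $P\ne\varnothing$, every point of $Q$ lies within $d_H(R,P)$ of $P\subset D_\alpha^K(\mu)$. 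Hence
  \[
    \sup_{q\in Q}\dist(q,D_\alpha^K(\mu))\le d_H(D_{\alpha-\eta}^K(\mu),D_\alpha^K(\mu)).
  \]
  \item In the other direction, every point of $D_\alpha^K(\mu)$ lies within $d_H(D_\alpha^K(\mu),D_{\alpha+\eta}^K(\mu))$ of $D_{\alpha+\eta}^K(\mu)\subset Q$.
\end{enumerate}
With $Q=D_\alpha^K(\mu_n)$, these two bounds give
\[
  d_H\bigl(D_\alpha^K(\mu_n),D_\alpha^K(\mu)\bigr)\le\max\Bigl\{d_H\bigl(D_{\alpha-\eta}^K(\mu),D_\alpha^K(\mu)\bigr),\,d_H\bigl(D_{\alpha+\eta}^K(\mu),D_\alpha^K(\mu)\bigr)\Bigr\}
\]
for all large $n$. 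Letting $\eta\downarrow0$ and using~\eqref{eq:H-regular} yields $d_H\bigl(D_\alpha^K(\mu_n),D_\alpha^K(\mu)\bigr)\to0$ on $\Omega_0$.

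The only delicate point is the one in (a): making sure the outer and inner regions are nonempty and compact, so that Hausdorff distances are finite. This follows from the regularity assumption as it is posed. Alternatively, nonemptiness of $D_{\alpha+\eta}^K(\mu)$ can be checked directly, since $D_\alpha^K(\mu)\ne\varnothing$ together with the first limit in~\eqref{eq:H-regular} forces it.
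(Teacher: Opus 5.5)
Your proof is correct and follows the paper's argument: the level-set sandwich $D_{\alpha+\eta}^K(\mu)\subset D_\alpha^K(\mu_n)\subset D_{\alpha-\eta}^K(\mu)$ once $\Delta_n<\eta$, combined with the two regularity limits. Fixing $\eta$ and working pathwise on the almost-sure event is, if anything, slightly cleaner than the paper's choice of a deterministic sequence $\eta_n$. One small wording fix in (b): the displayed bound follows directly from $Q\subset D_{\alpha-\eta}^K(\mu)$ and the definition of $d_H$. The chain $P\subset Q\subset R$ with $P=D_{\alpha+\eta}^K(\mu)$ would instead give $d_H(D_{\alpha-\eta}^K(\mu),D_{\alpha+\eta}^K(\mu))$, which also tends to zero.
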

\begin{pf}
  Let $\Delta_n=\sup_y|\de_K(y;\mu_n)-\de_K(y;\mu)|$. For every
  $\eta>\Delta_n$,
  \begin{equation}
    D_{\alpha+\eta}^K(\mu)
    \subset D_\alpha^K(\mu_n)
    \subset D_{\alpha-\eta}^K(\mu).
    \label{eq:region-sandwich}
  \end{equation}
  Choose a deterministic sequence $\eta_n\downarrow0$ with
  $\Delta_n<\eta_n$ eventually. The two Hausdorff limits in
  \eqref{eq:H-regular}, together with the sandwich
  \eqref{eq:region-sandwich}, imply the assertion.
\end{pf}

\subsection{Pointwise weak limits}

We next describe the effect of nonunique active translates. Fix
$y\in\R^d$, and let
\begin{align*}
  O_y:=\{x:y\notin K+x\}
  =\R^d\setminus(y+\check K), \quad \mbox{and} \quad F_y :=\cl O_y
  =\R^d\setminus\Int(y+\check K).
\end{align*}
If $\mu$ is $K$-smooth, continuity of $f=f_{K,\mu}$ gives
$M_K(y;\mu)=\max_{x\in F_y}f(x)$. Define the compact active set
$\mathcal A_y =\argmax_{x\in F_y}f(x)$. For a compact parameter set
$T$, write
\begin{displaymath}
  \Gc_n(x)=\sqrt n\{\mu_n(K+x)-\mu(K+x)\}, \quad x\in T.
\end{displaymath}
The translate class restricted to $T$ is called $\mu$-Donsker if
$\Gc_n$ converges in $\ell^\infty(T)$ to the centred Gaussian process
$\Gc$ with covariance
$\operatorname{Cov}(\Gc(x),\Gc(x')) =\mu((K+x)\cap(K+x'))-f(x)f(x')$.

\begin{theorem}[Pointwise empirical limit]
  Assume that $\mu$ is $K$-smooth and that $M_K(y;\mu)>0$. Choose a
  compact $T$ containing $\mathcal A_y$ in its interior and satisfying
  $\sup_{x\in F_y\setminus T}f(x)<M_K(y;\mu)$. Suppose that the
  translate class restricted to $T$ is $\mu$-Donsker.  Then
  $\sqrt n\{\de_K(y;\mu_n)-\de_K(y;\mu)\}$ converges in distribution
  to $-\sup_{x\in\mathcal A_y}\Gc(x)$.
\end{theorem}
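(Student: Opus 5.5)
This is a standard argmax/sup-functional delta-method argument. Write $M=M_K(y;\mu)$, $f_n(x)=\mu_n(K+x)$, and note that $\sqrt n\{\de_K(y;\mu_n)-\de_K(y;\mu)\}=-\sqrt n\{M_K(y;\mu_n)-M\}$. It therefore suffices to show
\[
\sqrt n\{M_K(y;\mu_n)-M\}\Rightarrow\sup_{x\in\mathcal A_y}\Gc(x).
\]

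\textbf{Step 1: reduce the empirical supremum to $F_y\cap T$.} For the empirical measure the supremum is over the open set $O_y$, not its closure. The reverse inclusion fails empirically: a translate with $y\in\partial(K+x)$ may capture a sample point at $y$. Since $\mu$ is $K$-smooth, $\mu(\{y\})=0$, so almost surely no observation equals $y$. Then for $x\in F_y$ close to $x'\in O_y$, the set $K+x'$ contains every sample point of $K+x$ except possibly those on $\partial(K+x)$. I expect this boundary-point issue to be the main obstacle. The first thing I would try is to bound the discrepancy by $\sup_{x\in T}\mu_n(\partial(K+x))$ and show this is $o_P(n^{-1/2})$. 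That holds if the Donsker property is applied to the class of closed translates together with the class of interiors, whose Gaussian limits coincide by $K$-smoothness.

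Failing that, I would exploit the slack in the approximation. Any $x\in F_y$ can be pushed slightly into $O_y$ along a direction, say $x'=x+\eps v$ with $\eps\to0$, and then
\[
\sqrt n\,|f_n(x')-f_n(x)|\le|\Gc_n(x')-\Gc_n(x)|+\sqrt n|f(x')-f(x)|.
\]
Asymptotic equicontinuity of $\Gc_n$ on $T$ (part of the Donsker hypothesis) and continuity of $f$ make the first term small; choosing $\eps=\eps_n$ depending on $n$ controls the second. Either route gives
\[
\sqrt n\bigl\{M_K(y;\mu_n)-\sup_{x\in F_y} f_n(x)\bigr\}\to0
\]
in probability.

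\textbf{Step 2: localize to $T$.} By Lemma~\ref{lem:uniform-translates}, $\sup_x|f_n-f|\to0$ almost surely. Since
\[
\sup_{F_y\setminus T}f<M,
\]
with probability tending to one the supremum of $f_n$ over $F_y$ is attained up to $o(1)$ inside $F_y\cap T$. So
\[
\sqrt n\Bigl(\sup_{F_y} f_n-M\Bigr)=\sup_{x\in F_y\cap T}\bigl\{\Gc_n(x)+\sqrt n(f(x)-M)\bigr\}
\]
eventually.

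\textbf{Step 3: pass to the limit.} Consider the functional
\[
\phi_n(g)=\sup_{x\in F_y\cap T}\{g(x)+\sqrt n(f(x)-M)\}
\]
on $\ell^\infty(T)$, and show that $\phi_n(g_n)\to\sup_{\mathcal A_y}g$ whenever $g_n\to g$ uniformly with $g$ continuous. This is the standard Hadamard directional differentiability of the supremum map (Römisch / Cárcamo–Cuevas–Rodríguez). The lower bound follows by evaluating at points of $\mathcal A_y$. For the upper bound, take near-maximisers $x_n$. Since $g_n$ is bounded, $\sqrt n(M-f(x_n))$ is bounded, so $f(x_n)\to M$ and, by compactness and continuity of $f$, $\dist(x_n,\mathcal A_y)\to0$. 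Continuity of $g$ then gives $\limsup\phi_n(g_n)\le\sup_{\mathcal A_y}g$. The Gaussian limit $\Gc$ has continuous paths in the intrinsic semimetric; by $K$-smoothness this semimetric is continuous in the Euclidean sense. The extended continuous mapping theorem (van der Vaart–Wellner, Theorem 1.11.1) then yields convergence in distribution to $\sup_{\mathcal A_y}\Gc$. Combining this with Step 1 and the sign flip gives the claim, including the Gaussian special case when $\mathcal A_y$ is a singleton.
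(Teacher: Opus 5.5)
Your argument is correct if Step~1 uses your second route. Its skeleton is the paper's: localise to $T$, differentiate the supremum functional in the direction of a continuous limit, then apply the delta method. The difference is where the open-versus-closed discrepancy is handled.

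You first replace the empirical supremum over $O_y$ by one over $F_y$, and pay for this with asymptotic equicontinuity of $\Gc_n$ on shrinking neighbourhoods. The paper never makes that replacement. It localises the empirical supremum to $O_y\cap T$ and applies the directional delta method directly to $\Phi_y(h)=\sup_{x\in O_y\cap T}h(x)$. Density of $O_y$ in $F_y$ then enters only at the population level, in the lower bound for the derivative. There, a point $x\in\mathcal A_y$ is approximated by points of $O_y$ at which $f$ loses only $o(t)$, and the limit direction $h\in C(T)$ is continuous at $x$. So the issue of sample points on $\partial(K+x)$, which you call the main obstacle, never arises. Nothing beyond weak convergence of $\Gc_n$ with continuous limit paths is needed.

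Your first route would need a Donsker property for the interiors of the translates, which is not among the hypotheses. Your second route works, but it needs uniformity that you leave implicit:
\begin{itemize}
\item take $v$ to be an outer normal of $y+\check K$ at $x$;
\item choose $\eps_n$ with $\sqrt n\,b_K(\eps_n)\to0$, where $b_K$ is the band function from the proof of Lemma~\ref{lem:uniform-translates}; since $|f(x)-f(x')|\le\mu((K+x)\triangle(K+x'))\le2b_K(\|x-x'\|)$, this controls both the deterministic term $\sqrt n|f(x+\eps_nv)-f(x)|$ and the intrinsic distance between $x$ and $x+\eps_nv$, uniformly in $x$;
\item use that near-maximisers approach $\mathcal A_y\subset\Int T$, so the pushed points stay in $T$.
\end{itemize}

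Your version gives a self-contained proof via the extended continuous mapping theorem on the compact set $F_y\cap T$, where the cluster-point step is cleanest. The paper's version is shorter and asks less of the empirical process.
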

\begin{pf}
  The localisation gap and uniform consistency of the translate masses
  imply that, with probability tending to one, the empirical supremum
  defining $M_K(y;\mu_n)$ may be restricted to $O_y\cap T$.
  On $\ell^\infty(T)$ consider for $h \in \ell^\infty(T)$ the functional $\Phi_y(h)=\sup_{x\in O_y\cap T}h(x)$. Because $O_y$ is dense in $F_y$, $f$ is continuous, and its
  maximisers on $F_y\cap T$ form $\mathcal A_y$, the supremum
  functional is Hadamard directionally differentiable at $f$,
  tangentially to the space of continuous functions $C(T)$, with derivative
  \begin{equation}
    \Phi'_{y,f}(g)=\sup_{x\in\mathcal A_y}g(x).
    \label{eq:sup-derivative}
  \end{equation}
  For completeness, the upper bound follows by taking cluster points
  of near-maximisers. For the lower bound, approximate any
  $x\in\mathcal A_y$ by points of $O_y$ so closely that the loss in
  $f$ is $o(t)$ along a perturbation of size $t$.

  The $K$-smoothness assumption makes $x\mapsto\1_{K+x}$ continuous in
  $L_2(\mu)$. Hence the Donsker limit has a version whose sample paths
  are continuous on $T$ in the induced semimetric, so it belongs to
  the tangent space in \eqref{eq:sup-derivative}. The directional
  delta method \cite{FangSantos2019} shows that
  $\sqrt n\{M_K(y;\mu_n)-M_K(y;\mu)\}$ converges in distribution to
  $\sup_{x\in\mathcal A_y}\Gc(x)$.  Since the depth is one minus this
  supremum, the result follows.
\end{pf}

\begin{remark}
  The Donsker assumption is a genuine empirical-process condition and
  is not implied merely by convexity of $K$. In dimension $d=2$, the
  family of positive homothets of any convex body has VC dimension at
  most $3$ \cite[Theorem~3]{NaszodiTaschuk2010}; in particular, the
  translate class is VC and, under the standard measurability conditions,
  is Donsker by the VC entropy bound and entropy criterion from
  \cite[Theorems~2.6.7 and~2.5.2]{VanderVaartWellner1996}. In contrast,
  \cite[Example~4]{NaszodiTaschuk2010} constructs a convex body in
  $\R^3$ whose family of translates has infinite VC dimension. Thus
  convexity alone does not yield a universal Donsker theorem in
  dimensions $d\ge3$.
\end{remark}

\subsection{Computation, examples and applications}
\label{sec:plan-comp-exampl}

When $K$ is a polytope,
formulas~\eqref{eq:empirical-erosion} lead to a direct arrangement
algorithm.
\begin{enumerate}[leftmargin=2.2em]
\item Construct the arrangement of the reflected translates
  $X_i+\check K$ in shift space.
\item Label each arrangement cell by its constant coverage count
  $N_n(x)$.
\item Take the union of cells with count at least $m_n(\alpha)$ and
  compute the closed convex hull of that union.
\item Erode $K$ by the resulting convex set, using
  \eqref{eq:empirical-erosion}.
\end{enumerate}
This procedure is exact in polygonal arithmetic in the plane; a numerical
implementation must additionally report its tolerance for incidences,
convex-hull construction, and erosion.

\begin{figure}[ht]
\centering
\begin{subfigure}{0.425\textwidth}
 \centering\includegraphics[width=\linewidth]{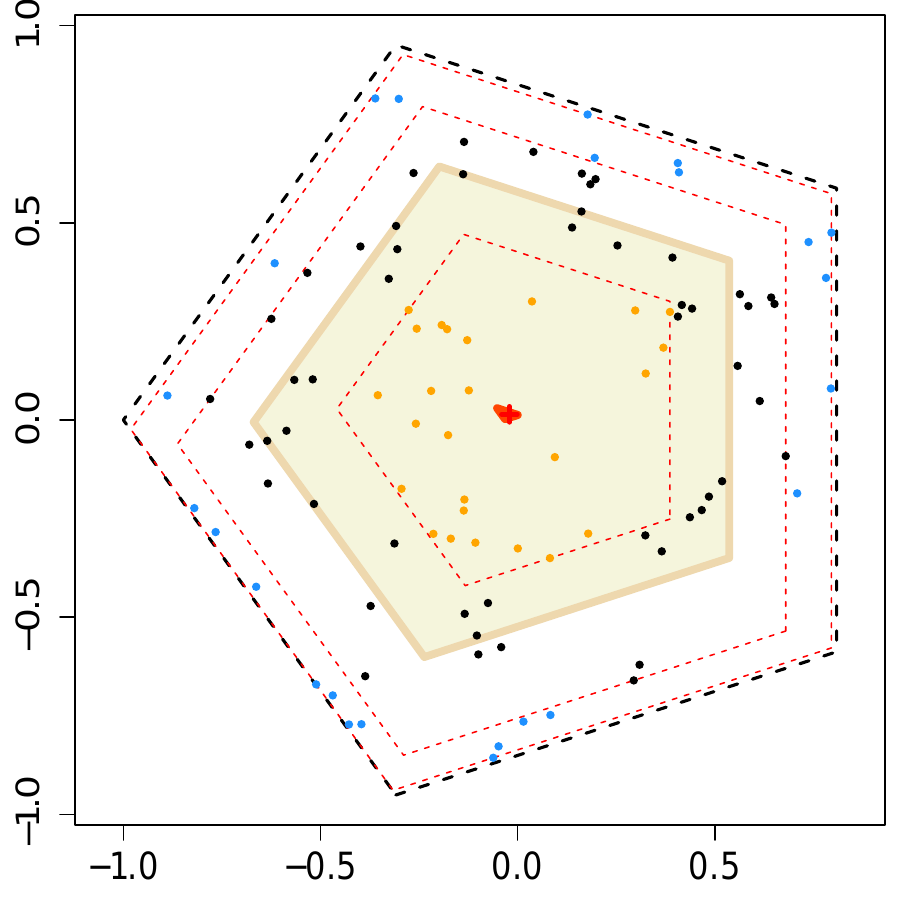}
 \caption{$n=100$.}
\end{subfigure}\hfill
\begin{subfigure}{0.425\textwidth}
 \centering\includegraphics[width=\linewidth]{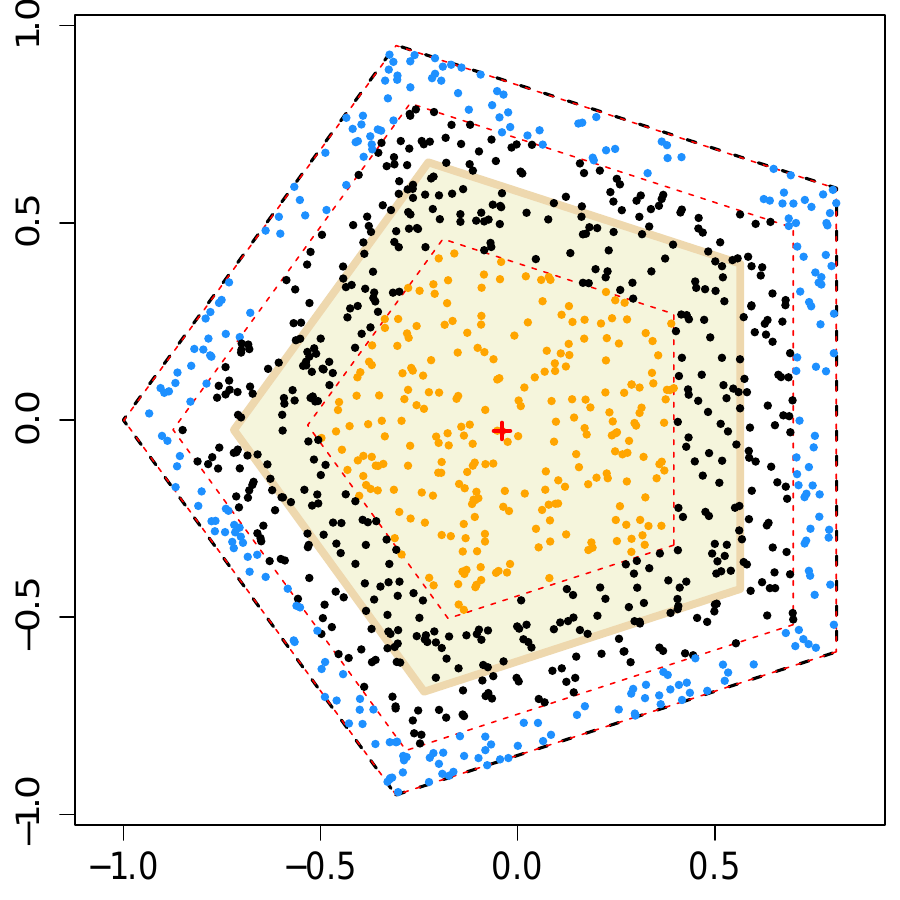}
 \caption{$n=1000$.}
\end{subfigure}

\begin{subfigure}{0.425\textwidth}
 \centering\includegraphics[width=\linewidth]{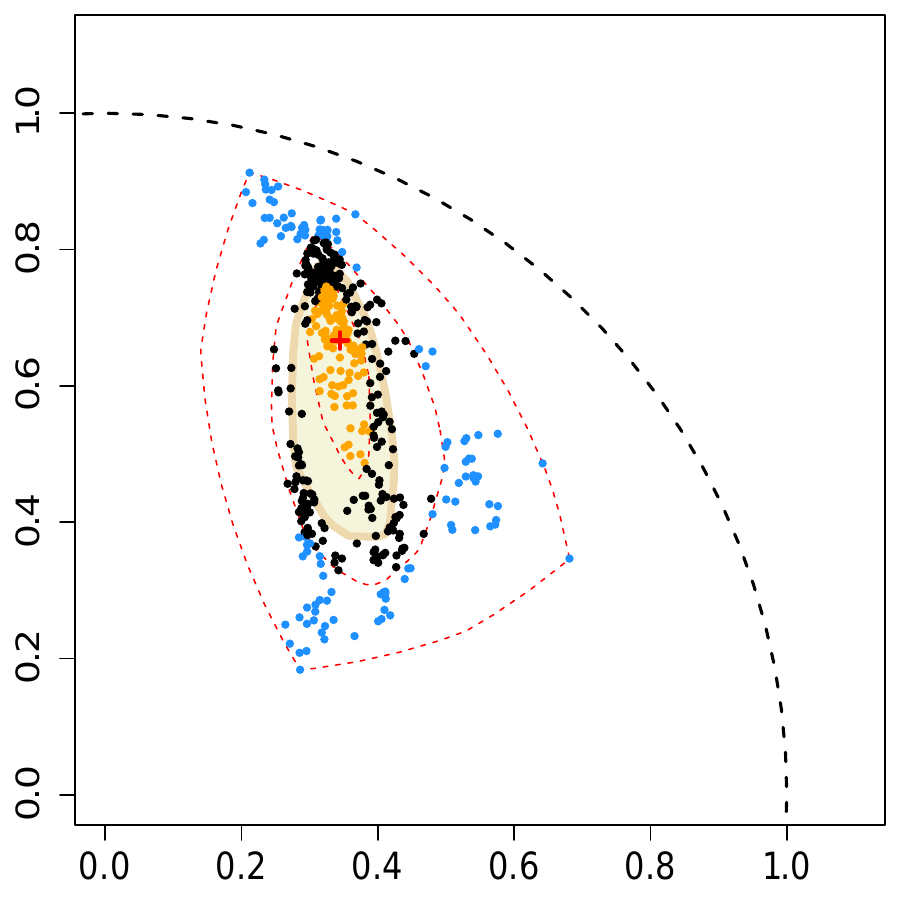}
 \caption{Hydrochemical (compositional) data in Hellinger coordinates supported in $B_1$.}
\end{subfigure}\hfill
\begin{subfigure}{0.425\textwidth}
 \centering\includegraphics[width=\linewidth]{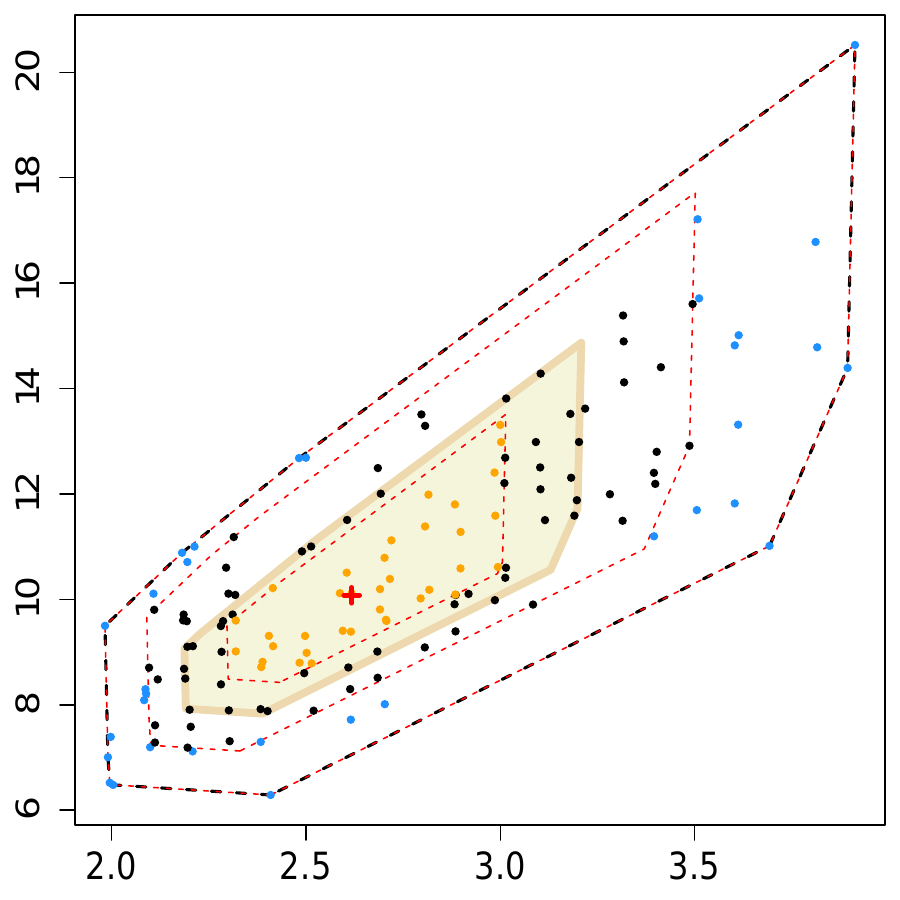}
 \caption{Cat weights dataset with data-dependent $K$ (the convex hull of the data).}
\end{subfigure}
\caption{Section~\ref{sec:plan-comp-exampl}: Top: Empirical $K$-depth regions for uniform samples in a regular pentagon. The same deterministic testing body is used in both panels (a) and (b). Bottom: Empirical $K$-depth regions for two data sets. Panel (c) uses a deterministic support-adapted body $B_1$; panel (d) is an exploratory sample-dependent construction.}
\label{fig:sample-data}
\end{figure}

Figure~\ref{fig:sample-data} illustrates four choices of support or
testing body. In each panel the (outermost) black dashed curve is $K$,
thin dashed curves are selected empirical regions, the beige region
with the thick boundary is the smallest $K$-depth region containing
$50\%$ of the data, and the red cross (the $K$-median) is the barycentre of the
smallest nonempty region. The beige region represents the $K$-adapted central part of the data cloud and serves as a boxplot-like visualisation of multivariate datasets \cite{Rousseeuw_etal1999}. The $K$-median presents a robust location estimator respecting the geometry of the testing body; the bound from Theorem~\ref{thm:sharp-centerpoint} translates into a quantitative robustness result \cite{DonohoGasko1992}. Our results from Section~\ref{sec: dimension} guarantee non-degeneracy of the $K$-depth even in high-dimensional data setups, and the quantitative halfspace approximation from Section~\ref{subsec:quantitative-dilation} offers well-behaved $K$-depth variants of the halfspace depth in such situations. The $K$-depth with $K$ a (hyper)rectangle or a ball yields a principled alternative to the box/band depth \cite{Gil_etal1992, Lopez_Romo2009} and the spherical depth-like constructions \cite{Elmore_etal2006, Liu_Modarres2011} used in contemporary statistics and machine learning \cite{Kleindressner_Luxburg2017}.

The last panel of
Figure~\ref{fig:sample-data} uses the sample convex hull as $K$. It is
included as an exploratory data display; because that testing body is
random, the fixed-$K$ consistency
Theorem~\ref{thm:uniform-consistency} does not apply to it without an
additional stability analysis.

\section{Discussion}

The construction studied here has two complementary interpretations.
In shift space, it retains translations of $K$ with large probability
content. In observation space, it intersects the retained translates.
The identity $D_\alpha^K(\mu)=K\ominus\cl\conv L^>_{1-\alpha}$
connects these viewpoints and is the main geometric mechanism behind
the theory.

Several distinctions are essential. The $K$-depth has the baseline
$1-\beta_{K,\mu}$ and should be normalised when a depth vanishing off
its effective support is desired. The closed maximiser core and the
effective support coincide only under additional conditions and are
not interchangeable. Smoothness of $K$ alone does not imply smoothness
of a depth region: uniqueness and stability of active translations are
the decisive local conditions. Finally, a smooth testing body recovers
halfspace depth only in the large-dilation limit; nonsmooth bodies retain
their tangent-cone geometry.

The translative Helly number $h_T(K)$ gives an exact global measure of this
dependence on $K$: the worst-case maximal depth is $1/h_T(K)$, and the
same mechanism yields the exact finite-sample constant. At the local
large-scale end, the rolling-ball estimate controls the approximation
to halfspace depth by the mass of a slab of width of order $t^{-1}$.
If the density is bounded,
Theorem~\ref{thm:quantitative-dilation} gives the explicit bound 
whose $t^{-1}$ order is already sharp for concentric Euclidean
balls. Thus the geometry of the testing body affects both the global
centerpoint guarantee and the quantitative bias relative to halfspace
depth.

A deterministic testing body should normally encode known support or
shape constraints. If affine equivariance is required, $K$, the data,
and the query point must be transformed jointly as in
Proposition~\ref{prop:equivariance}; estimating $K$ from the same sample
makes the testing body random.

The choice of $K$ therefore controls a genuine modelling tradeoff. A
large smooth body makes the depth close to halfspace depth, with the
rate described in Theorem~\ref{thm:quantitative-dilation}. A
polyhedral or nonsmooth body can encode directional constraints and
may have a larger worst-case centerpoint constant, as in the
parallelotope case.  This dependence is not reducible to a simple
inclusion principle, while Theorem~\ref{thm:K-stability} gives the
basic continuity needed when $K$ is chosen or estimated approximately.

The identification with the recent ball-floating-body programme is
complementary rather than merely terminological. The results of
Sch\"utt, Werner and Yalikun \cite{SchuettWernerYalikun2025} provide
sharp geometric volume derivatives, and curvature functionals for
smooth uniform or weighted models. The present framework supplies the
probabilistic extension to arbitrary measures, continuity under weak
convergence, empirical process limits, and the transition to halfspace
depth under dilation. The new ball-illumination theory of Oliva,
Werner and Yalikun \cite{OlivaWernerYalikun2026} provides the
corresponding exterior branch; it shares the relative curvature
functionals but is not representable by the inward intersection that
defines a depth region.

The results for empirical measures are formulated for a fixed
testing body. The stability theorem above supplies the deterministic
part needed for data-driven choices of $K$ when an independent argument
shows $d_H(K_n,K)\to0$. Uniform inference for entire depth contours
and differentiating depth regions under perturbations of $K$ would require
additional nondegeneracy assumptions. These problems are not treated here
and are left for future work.


\bibliographystyle{cas-model2-names}

\bibliography{allpapers_v2}



\end{document}